\documentclass[11pt]{article}
\usepackage{amsfonts}
\usepackage{amsfonts}
\usepackage{amsfonts}
\usepackage{amsfonts}
\usepackage{amsfonts}
\usepackage{amsfonts}
\usepackage{amsfonts}
\usepackage{amsfonts}
\usepackage{amsfonts}
\usepackage{amsfonts}
\usepackage{amsfonts}
\usepackage{amsfonts}
\usepackage{amsfonts}
\usepackage{amsfonts}
\usepackage{amsfonts}
\usepackage{amsfonts}
\usepackage{amsfonts}
\usepackage{mathrsfs}
\usepackage{amssymb,mathrsfs,amsmath,amsthm,amsfonts}
\usepackage{graphicx}
\usepackage[pdfstartview=FitH]{hyperref}
\usepackage[title]{appendix}

\setcounter{MaxMatrixCols}{30}
\providecommand{\U}[1]{\protect\rule{.1in}{.1in}}
\allowdisplaybreaks 

\numberwithin{equation}{section}
\newtheorem{thm}{Theorem}[section]

\newtheorem{proposition}{Proposition}[section]
\newtheorem{corollary}{Corollary}[section]
\newtheorem{lemma}{Lemma}[section]

\newtheorem{Rem}{Remark}[section]
\newtheorem{example}{Example}[section]
\newtheorem{hypothesis}{Hypothesis}[section]

\numberwithin{equation}{section}

\footskip 1cm \topmargin 0cm \oddsidemargin 0.25cm \evensidemargin
0.25cm \marginparwidth 1cm
 \textwidth 155mm \textheight 225mm

\topmargin -10mm

\title{\bf{Malliavin Matrix of Degenerate SDE and Gradient Estimate}\thanks{The authors
were Supported by 973 Program, No. 2011CB808000 and
Key Laboratory of Random Complex Structures and Data Science, No.2008DP173182, NSFC, No.:10721101, 11271356, 11371041.}}

{\author{\ Dong Zhao$^{\dag}$,  \  Xuhui Peng$^{\ddag}$ \\
{\em\small  Academy of Mathematics and Systems Science, Chinese Academy of Sciences,}\\
{\em\small  Beijing, {\rm 100190}, P.R.China.}
 }}

\date{}
\begin{document}
\footnotetext{$^\dag$Email:  dzhao@amt.ac.cn}
\footnotetext{$^\ddag$Corresponding author.  Email: pengxuhui@amss.ac.cn}

\maketitle

\begin{abstract}
\noindent In this article, we  prove that the inverse of Malliavin
matrix belong to $L^p(\Omega,\mathbb{P})$ for a kind of degenerate
stochastic differential equation(SDE) under some  conditions, which
like to H\"{o}rmander condition, but  don't need all the
coefficients of the SDE are smooth. Furthermore, we obtain  a
locally uniform estimation for Malliavin matrix, a gradient
estimate, and prove that the semigroup generated by the SDE is
strong Feller. Also some examples are given.

\vskip0.5cm\noindent{\bf Keywords:}
Degenerate stochastic differential equation; Gradient estimate; Strong Feller; Malliavin calculus;
H\"{o}rmander condition.  \vspace{1mm}\\
\noindent{{\bf AMS 2000:} 60H10, 60H07.}
\end{abstract}
\section{Introduction and Notations}
In this article, we consider the following degenerate stochastic
differential equations(SDE)
\begin{equation}
\label{1-1}
\left\{
\begin{split}
&x_t=x+\int_0^ta_1(x_s,y_s)ds,\\
&y_t=y+\int_0^ta_2(x_s,y_s)ds+\int_0^tb(x_s,y_s)dW_s.
\end{split}\right.
\end{equation}
where $x \in \mathbb{R}^{m},~y \in \mathbb{R}^{n}, b\in
\mathbb{R}^{n\times d},~W_s$ is a $d$-dimensional standard Brownian
motion. Eq.$(\ref{1-1})$ is a  model  for many physical phenomenons.
For example,  $x_t$ represents the  position of an object and $y_t$
represents the momentum of the object. When a
random force  affects  the object, the momentum of the object changes  firstly, then that would
lead to  the variety of the object position. Thus the equation which describes the
movement of the object  is naturally degenerate as Eq.$(\ref{1-1})$. To understand the
long time behavior of  the movement of the object,  we need to study  the  ergodicity of
Eq.$(\ref{1-1})$. For this reason, the gradient estimate  of the
semigroup and the strongly Feller property associated to the
solution should be considered, and   the solution is  ergodic if one also
knows that the solution is topological irreducible and has an invariant
probability measure.

Let $\mathbb{P}_{x,y}$ be the law of the
solution to equation Eq.$(\ref{1-1})$ with initial value $(x,y)$, and
$P_t$ be the transition semigroup of Eq.$(\ref{1-1})$
\begin{align*}
P_tf(x,y):=\mathbb{E}_{x,y}f(x_t,y_t),~f\in \mathscr{B}_b(\mathbb{R}^{m}\times\mathbb{R}^{n};\mathbb{R}),
\end{align*}
where  $\mathscr{B}_b(\mathbb{R}^{m}\times\mathbb{R}^{n};\mathbb{R})$ denotes the collection of bounded
 Borel measurable functions and  $\mathscr{B}(\mathbb{R}^{m}\times\mathbb{R}^{n};\mathbb{R})$ denotes the
 collection of  Borel measurable functions.

For the general SDE
\begin{eqnarray}\label{1-4}
  X_t=x+\int_0^t V_0(X_s)ds+\sum_{j=1}^d\int_0^t  V_j(X_s)\circ dW_j(s), \ \ x\in \mathbb{R}^{m+n}.
\end{eqnarray}
 The   H\"{o}rmander condition  $(\mathbf{H})$  is that the vector
space spanned by the vector fields
\begin{eqnarray*} \label{1-2}
(\mathbf{H})\ \ \
V_1,\cdots,V_d,~[V_i,V_j],0 \leq i,j \leq d,~[[V_i,V_j],V_k],~0\leq i,j,k \leq d,~\cdots,
\end{eqnarray*}
at point $x$ is $\mathbb{R}^{m+n}$. The coefficients  are infinitely
differentiable functions with bounded partial derivatives of all
order. If  the  H\"{o}rmander condition  $(\mathbf{H})$ holds for
any  $x\in\mathbb{R}^{m+n}$ ,  the process  $X_t$ has a smooth
density  and  the transition semigroup of $(\ref{1-4})$ is strong
Feller (see \cite{Ichihara}, \cite{Kusuoka}, \cite{Nualart},
\cite{Shigekawa} etc).

Let  $V=(V_1,\cdots,V_d)$ and $P_t(x,\ )$ be the transition probabilities probabilities of the
 $X_t$ in $(\ref{1-4})$. When $ VV^*$, where $*$ means the transpose of the matrix, being uniformly elliptic,  the two-sided bounds of the
  density for  $P_t(x,\ )$  were  given   in \cite{Sheu} by using stochastic control tools.
  There also many other excellent works when $VV^*$ is non-degenerate.

There  is also many works in the hypoelliptic setting.   For the special case $V_0\equiv0,$ in \cite{Kusuoka},
Kusuoka  and Stroock  gave the two-sided bounds of the density for $P_t(x,\ )$ under
some conditions   which need some uniformity on  $V_1,\cdots,V_d.$
Recently, in \cite{Delarue},  Delarue and Menozzi  considered   the following SDE,
\begin{eqnarray}\label{1-5}
\left\{
  \begin{split}
X_t^1&=x_1+\int_0^t F_1(s,X_s^1,\cdots,X_s^n)ds+\int_0^tb(s,X_s^1,\cdots,X_s^n)dW_s,
\\
X_t^2&=x_2+\int_0^t F_2(s,X_s^1,\cdots,X_s^n)ds,
\\
X_t^3&=x_3+\int_0^t F_3(s,X_s^2,\cdots,X_s^n)ds,
\\
&  \ \ \vdots
\\
X_t^{n}&=x_n+\int_0^tF_n(s,X_s^{n-1},X_s^n)dt.
\end{split}
\right.
\end{eqnarray}
If the spectral of the $A(t,x)=[bb^*](t,x)$, is included in $[\Lambda^{-1}, \Lambda]$ for some $\Lambda\geq1$
and $D_{x_{i-1}}F_i(t,x_{i-1},x_i,\cdots,x_n)$ is non-degenerate, uniformly in space and time, they
gave the two-sided bounds of the density for to the solution to Eq.$(\ref{1-5})$.\
Another work is that  in \cite{Ciniti},
the authors    considered the SDE as
\begin{eqnarray}\label{1-6}
  X_t^i=x_i+W_t^i,\ \ \forall i \in [1,n], \ \ \ X_t^{n+1}=x_{n+1}+\int_{0}^t|X_s^{1,n}|^kds,
\end{eqnarray}
here $X_s^{1,n}=(X_s^1,\cdots,X_s^n)$ and    they gave  the two-sided bounds estimation for the transition function
$p(t,x,.)$ in \cite{Ciniti}.

There are also  many  other  researches  on the special case of Eq.$(\ref{1-1})$,  such as
 \cite{J.C. Mattingly}, \cite{Kliemann}, \cite{D.Talay} and so on.
In \cite{J.C. Mattingly} and  \cite{D.Talay}, the authors studied the ergodicity and
in \cite{Kliemann}, the author studied the  recurrence and invariant measure.

In most of the above works, the coefficients are smooth or some uniform conditions are needed. Since
our aim in this article is to prove the strong Feller property and give a  gradient estimate of the semigroup, we don't need the smooth conditions for all the coefficients or some uniform conditions. Instead of  the  H\"{o}rmander conditions, we give some  new conditions, which are  equivalent to the H\"{o}rmander condition if the coefficients are smooth, and proved that the inverse of the Malliavin matrix is $L^p$ integrable for any $p\ge 0.$ Furthermore, our new conditions  also    ensure   that we can obtain  a  gradient estimate
and the strong Feller property.

We haven't  obtain  the smoothness of the density or
the two-sided bounds of the density  for the  lack of  smoothness or some uniform conditions on the coefficients.

Before we give the organization of this article, we introduce  some notations.
For $j\in
\mathbb{N},$ let  $C^j(\mathbb{R}^m\times \mathbb{R}^n;
\mathbb{R}^{l})$ be the collection of functions which have  continuous
derivatives  up to order $j$ and $C_{b}^j(\mathbb{R}^m\times \mathbb{R}^n;
\mathbb{R}^{l})$ be the collection of functions in
$C^j(\mathbb{R}^m\times \mathbb{R}^n; \mathbb{R}^{l})$ with bounded derivatives. Sometimes,
we will use $C_{b}^{j}$ and $C^{j}$ instead of them  for the convenience of writing.
For
$l\in \mathbb{N},~k=(k_{1}(x,y),\cdots,k_{l}(x,y))^{* }\in C^1(\mathbb{R}^m\times
\mathbb{R}^n; \mathbb{R}^{l}),$ $x=(x_{1},\cdots,x_{m})^{* },~y=(y_{1},\cdots,y_{n})^{* }$,
\begin{eqnarray*}
\nabla _{x_{i}}k &=&\left( \frac{\partial k_{1}}{\partial x_{i}},\cdots
,\frac{\partial k_{l}}{\partial x_{i}}\right) ^{* },~i=1,\cdots ,m, \ \nabla _{x}k
= (\nabla _{x_{1}}k,\cdots ,\nabla _{x_{m}}k),\\
\nabla _{y_{j}}k&=&\left( \frac{\partial k_{1}}{\partial
y_{j}},\cdots ,
\frac{\partial k_{l}}{\partial y_{j}}\right) ^{* },~j=1,\cdots ,n,\ \nabla_{y}k
= (\nabla _{y_{1}}k,\cdots ,\nabla _{y_{n}}k),
\end{eqnarray*}
and $\nabla k=(\nabla_xk,\nabla_yk)$. If $a_1 \in C^{j_0}(\mathbb{R}^m\times
\mathbb{R}^n; \mathbb{R}^{m})$ for some $j_0 \in \mathbb{N}$, we
define vector fields:
\begin{eqnarray*}
\label{2-117}
&&\mathcal{A}_{1} =\big\{\nabla _{y_{j}}a_{1},~j=1,\cdots ,n\big\}, \\
\label{2-118}&&\mathcal{A}_{l} =\big\{\nabla _{y_{j}}k,~j=1,\cdots ,n,~-\nabla
_{x}a_{1}\cdot k+\nabla _{x}k\cdot a_{1}:k\in \mathcal{A}_{l-1}\big\},\
l=2,\cdots,j_0.
\end{eqnarray*}
Assume  $a_1=(a_1^1,\cdots,a_1^m)^{*},~a_2=(a_2^1,\cdots,a_2^n)^{*},$ $a=(a_1^*,a_2^*)^*.$
 $\mathbb{N}=\{1,\cdots\}$.
Let  $\det(A)$ be the determinant of the matrix $A=(a_{i,j})$,
$\|A\|^2=\sum_{i,j}a^2_{i,j}$. Let $\langle\cdot,\cdot\rangle$  be
the Euclidean inner product and $|\cdot|$ be  the Euclidean norm.
For any $x_0\in\mathbb{R}^{m+n}$ and $R>0$, $\overline{B}(x_0,R)=\{x\in \mathbb{R}^{m+n},|x-x_0|\leq R\}$,
 $B^{\circ}(x_0,R)=\{x\in \mathbb{R}^{m+n},|x-x_0|< R\}.$
$\|k\|_{\infty}$ denotes the essential supreme norm for the
function $k$ defined on Euclidean space. We use $C(d)$ or
$\epsilon_0(d)$ to  denote a positive and finite constant depending
on $d, \ \|\nabla a\|_{\infty}$ and $ \|\nabla b\|_{\infty}$.
This constant   may change from line to line. Sometimes, we will use
$C$  instead of $C(d)$ for the convenience of writing. Without otherwise
specified, in this article, $(x_t,y_t)$ is the solution for
Eq.$(\ref{1-1})$ and  $(x,y)$ is its   initial value.
Let $M_t$ be  the Malliavin matrix for $(x_t,y_t)$.
Then  (c.f. \cite{Nualart})
\begin{align}\label{2-2}
M_t=J_{t}\int_{0}^{t}J_{s}^{-1}\Big(
\begin{matrix}
 0 \\
 b(x_{s},y_{s})%
\end{matrix}%
\Big) \Big(
\begin{matrix}
0 \\
 b(x_{s},y_{s})%
\end{matrix}%
\Big) ^{*}(J_{s}^{-1})^{*}ds.J_{t}^{* },
\end{align}
here $J_t^{-1}$ satisfies
\begin{eqnarray}
\label{17}
\begin{split}
J_{t}^{-1}&=I_{m+n} -\int_0^t J_{s}^{-1}\Big(
\begin{matrix}
0 & 0 \\
\nabla_x b_{j}  & \nabla _{y}b_{j}%
\end{matrix}%
\Big)(x_s,y_s) dW_{j}(s)
\\   & \ \ \ -\int_0^t J_{s}^{-1}\Bigg[ \Big(
\begin{array}{cc}
\nabla _{x}a_{1} & \nabla _{y}a_{1} \\
\nabla _{x}a_{2} & \nabla _{y}a_{2}%
\end{array}%
\Big)(x_s,y_s)
\\ &\ \ \ \ \ \ \ \ \ \ \ \ \ \ \ \ \ -\sum_{j=1}^d\Big(
\begin{array}{cc}
0 & 0 \\
\nabla _{y}b_{j}\nabla_x b_{j} & \nabla_y b_j \nabla_y b_j
\end{array}%
\Big)(x_s,y_s)  \Bigg] ds,
\end{split}
\end{eqnarray}
and $J_t$ satisfies
\begin{equation}\label{2-1-1000}
\begin{split}
J_t&=I_{m+n}+\int_0^t \Big(\begin{matrix}
\nabla _{x}a_{1} & \nabla _{y}a_{1} \\
\nabla _{x}a_{2} & \nabla _{y}a_{2}%
\end{matrix}\Big)(x_s,y_s)J_sds\\
&  \ \ \ +\sum_{j=1}^d \int_0^t \Big(
\begin{matrix}
0 & 0 \\
\nabla _{x}b_{j} & \nabla _{y}b_{j}%
\end{matrix}%
\Big)(x_s,y_s) J_sdW_j(s).
\end{split}
\end{equation}

Our article is organized as follows.
In section $2$, we prove the key theorem of this article  Theorem $\ref{15}$ under the  Hypothesis $\ref{2-18}$.
 In Hypothesis $\ref{2-18}$, we  only need $a_2\in C^1,~b\in
C^2$~and $a_1\in C^{j_0+2}$ for some $j_0\in \mathbb{N}$.
Compare with H\"{o}rmander condition, the ¡§
functions $a_2$ and $b$ are only required to be $C^1$
and $C^2$ respectively.
 Our method to prove Theorem $\ref{15}$  is similar to that
 in \cite{Nualart},  but it also has some differences.   These differences  depend heavily on the special form of
the Eq.$(\ref{1-1})$.
In \cite{Nualart}, $ J_t^{-1}$  is regarded as  a whole.
Here, we divide   $ J_t^{-1}$
 into   $\left(
\begin{matrix}
A_{t} & B_{t} \\
C_{t} & D_{t}
\end{matrix}
\right)$ and do more elaborate estimates.

In section $3$, we firstly give a  local uniform estimate for Malliavin matrix under the Hypothesis $\ref{100-1}$,  and then give a gradient estimate in  Theorem $\ref{2-39}$.
The local uniform estimate  for Malliavin matrix    is a key point to prove   Theorem $\ref{2-39}$.

In section 3, we have  proved  $P_t$ is  strong Feller under some conditions which need
all the coefficients of  Eq.$(\ref{1-1})$ are in  $C_b^2$.
Since there are     bounded conditions  on the  coefficients and their  derivatives, it   seems too strong to apply, for example,
the Hamiltonian  systems, so we weaken this bounded conditions in  section 4. In the  section 4,  we mainly use the  localization method to prove $P_t$ is strong Feller, our hypothesis  is the Hypothesis  $\ref{3-20}$.

In section $5$, we apply  the above results to some examples, such
as the Lagevin SDEs, the stochastic Hamiltonian systems and high
order stochastic differential equations.

\section{The $L^p$ Integrability of  the Inverse of Malliavin Matrix}
In subsection 2.1, we give the key Theorem  $\ref{15}$  and put it's proof in subsection 2.2.
\subsection{The Main Theorem and  Its Relations with H\"{o}rmander Theorem}

\begin{hypothesis}
\label{2-18}    $(x,y)\in \mathbb{R}^m\times \mathbb{R}^n$ and there  exists
 a $j_0:=j_0(x,y) \in \mathbb{N}$ such that:
\begin{description}
  \item[(\romannumeral1)]$a_1 \in C_b^1(\mathbb{R}^m\times \mathbb{R}^n;\mathbb{R}^m) \cap C^{j_0+2}(\mathbb{R}^m\times
\mathbb{R}^n;\mathbb{R}^m),
a_2\in C_b^1(\mathbb{R}^m\times \mathbb{R}^n;\mathbb{R}^n);$
\item[(\romannumeral2)] $\det(b(x,y)\cdot b^{*}(x,y)) \neq 0$,  $b \in C_b^{1}(\mathbb{R}^m \times \mathbb{R}^n;\mathbb{R}^{n}\times \mathbb{R}^d)
\cap C^2(\mathbb{R}^m \times \mathbb{R}^n;\mathbb{R}^{n}\times
\mathbb{R}^d);$
 \item[(\romannumeral3)] The vector space spanned by  $\cup_{k=1}^{j_0} \mathcal{A}_{k}$ at point $(x,y)$ has dimension $m$.
\end{description}
\end{hypothesis}
\begin{thm}
\label{15}  Let the     Hypothesis $\ref{2-18}$  hold, $T>0$,   then $\det(M_{T}^{-1})\in L^{p}(\Omega,\mathbb{P}_{x,y})$ for any $p>0$.
\end{thm}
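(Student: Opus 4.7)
The plan is to show the bound $\mathbb{P}(\lambda_{\min}(M_T)<\varepsilon)\le C_p\varepsilon^p$ for every $p\ge 1$, which together with $\det(M_T^{-1})\le \lambda_{\min}(M_T)^{-(m+n)}$ yields the theorem. Since the coefficients of the linear SDEs \eqref{17} and \eqref{2-1-1000} have bounded derivatives (Hypothesis \ref{2-18}(i)(ii)), standard Gronwall/BDG estimates give $\|J_T\|,\|J_T^{-1}\|\in L^p$ for all $p$. Using the identity \eqref{2-2}, it is then enough to prove the same tail bound for the smallest eigenvalue of the reduced matrix
\begin{equation*}
\widehat M_T=\int_0^T J_s^{-1}\binom{0}{b(x_s,y_s)}\binom{0}{b(x_s,y_s)}^{*}(J_s^{-1})^{*}ds.
\end{equation*}

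\textbf{Block decomposition and the easy direction.} Following the hint in the introduction, I would write
\begin{equation*}
J_s^{-1}=\begin{pmatrix}A_s & B_s\\ C_s & D_s\end{pmatrix},\qquad A_0=I_m,\ D_0=I_n,\ B_0=0,\ C_0=0,
\end{equation*}
with the block sizes $(m,n)$. For any unit vector $v=(v_1,v_2)\in\mathbb{R}^m\times\mathbb{R}^n$ one gets
\begin{equation*}
v^{*}\widehat M_Tv=\int_0^T\bigl|(v_1^{*}B_s+v_2^{*}D_s)\,b(x_s,y_s)\bigr|^{2}ds .
\end{equation*}
When $|v_2|\ge\delta$ the integrand at $s=0$ equals $v_2^{*}b(x,y)$, which is bounded below by a positive constant because $\det(bb^{*})\neq 0$; combined with a BDG-based modulus-of-continuity estimate for $B_s,D_s$ one obtains a deterministic lower bound on $v^{*}\widehat M_Tv$ on an event of arbitrarily large probability. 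Thus the nontrivial case is $|v_2|\le\delta$ (hence $|v_1|\ge 1/2$), which I treat by a Norris-type iteration.

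\textbf{Norris iteration through the families $\mathcal{A}_l$.} For a smooth $\mathbb{R}^m$-valued field $k(x,y)$ I would apply It\^o's formula to the semimartingale $Y_s^k:=v_1^{*}B_sk(x_s,y_s)$ using \eqref{17}. Because the upper-right diffusion block of $J^{-1}$ in \eqref{17} is zero (the key feature of the degenerate form \eqref{1-1}), the martingale part of $Y^k$ is driven exactly by $v_1^{*}B_s(\nabla_{y_j}k)(x_s,y_s)$ for $j=1,\dots,d$, while its drift contains $v_1^{*}B_s\bigl(-\nabla_x a_1\cdot k+\nabla_x k\cdot a_1\bigr)(x_s,y_s)$ plus terms of the form $v_1^{*}B_s(\cdot)$ that are already controlled. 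Starting from $k\equiv a_1$ (so that $Y_s^{a_1}$ is visible in the expansion of $(v_1^{*}B_s+v_2^{*}D_s)b$) and applying Norris's lemma, the smallness of $v^{*}\widehat M_Tv$ forces smallness of $Y_s^{k}$ on a small interval $[0,\eta]$ for every $k\in\mathcal{A}_1$, and then inductively for every $k\in\mathcal{A}_l$, $l\le j_0$. Evaluating at $s=0$ gives $v_1\cdot k(x,y)=0$ for all such $k$, contradicting Hypothesis \ref{2-18}(iii) that $\bigcup_{l\le j_0}\mathcal{A}_l$ spans $\mathbb{R}^m$ at $(x,y)$. A union bound over a finite $\varepsilon$-net of unit vectors (the usual step in Kusuoka--Stroock/Nualart) upgrades this contradiction into the polynomial tail estimate required.

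\textbf{Main obstacle.} The delicate point is that Hypothesis \ref{2-18} only asks $a_2\in C^1$ and $b\in C^2$, so every iterate must avoid differentiating $a_2$ more than once and $b$ more than twice. This is exactly why the two operations defining $\mathcal{A}_l$ use only $\nabla_y$ and $a_1$-derivatives, and why $J_s^{-1}$ must be split into blocks: treating $J_s^{-1}$ as a single matrix (as in \cite{Nualart}) would force Lie brackets that involve $\nabla^l a_2$ or $\nabla^l b$ with $l\ge 2$ or $l\ge 3$, which are unavailable here. The bookkeeping needed to show that the It\^o expansion of $Y_s^k$ never picks up such forbidden derivatives, and to verify that the drift term contributed by the $\binom{\nabla_y b_j\nabla_x b_j}{\nabla_y b_j\nabla_y b_j}$ correction in \eqref{17} is harmless when projected by $v_1^{*}B_s$, is the real technical heart of the proof.
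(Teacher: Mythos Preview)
Your overall architecture—reduce to the reduced matrix $\widehat M_T$, use the block decomposition $J_s^{-1}=\begin{pmatrix}A_s&B_s\\C_s&D_s\end{pmatrix}$, and run a Norris-type iteration adapted to the families $\mathcal{A}_l$—is the same as the paper's. However, the iteration you describe is carried by the wrong block, and this is a genuine gap rather than a cosmetic slip. You propose to iterate on $Y_s^k:=v_1^{*}B_s\,k(x_s,y_s)$, claiming that its drift contains $v_1^{*}B_s\bigl(-\nabla_x a_1\cdot k+\nabla_x k\cdot a_1\bigr)$. It does not. From \eqref{2-5} one has $dB_s=-\sum_j B_s\nabla_y b_j\,dW_j-(A_s\nabla_y a_1+B_s\nabla_y a_2)\,ds+\cdots$, so the drift of $Y_s^k$ picks up $-v_1^{*}A_s\,\nabla_y a_1\cdot k$, not $-v_1^{*}B_s\,\nabla_x a_1\cdot k$; the bracket structure of $\mathcal{A}_l$ never appears in this process. (Relatedly, the ``upper-right diffusion block of $J^{-1}$'' in \eqref{17} is \emph{not} zero: the upper row of the coefficient matrix is zero, but after left multiplication by $J_s^{-1}$ the upper row of $dJ_s^{-1}$ becomes $-(B_s\nabla_x b_j,\,B_s\nabla_y b_j)\,dW_j$.) Moreover $B_0=0$, so smallness of $v_1^{*}B_s k$ near $s=0$ is trivially true for every $k$ and yields no contradiction with Hypothesis~\ref{2-18}(iii).

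The process that actually carries the brackets is $(v_1^{*}A_s+v_2^{*}C_s)K(x_s,y_s)$ with $K\in\mathcal{A}_l$. The first step is to pass from smallness of $(v_1^{*}B_s+v_2^{*}D_s)b$ to smallness of $(v_1^{*}B_s+v_2^{*}D_s)$ itself (using $\det(bb^{*})\neq 0$ up to a stopping time), and then apply Norris to the SDE for $(v_1^{*}B_s+v_2^{*}D_s)$, whose drift contains exactly $-(v_1^{*}A_s+v_2^{*}C_s)\nabla_y a_1$; this produces the $\mathcal{A}_1$ level. Thereafter, It\^o's formula for $(v_1^{*}A_s+v_2^{*}C_s)K$ gives the drift term $(v_1^{*}A_s+v_2^{*}C_s)\bigl(-\nabla_x a_1\cdot K+\nabla_x K\cdot a_1\bigr)$ and martingale parts of the form $(v_1^{*}A_s+v_2^{*}C_s)\nabla_y K\,b_j$, so a second Norris application generates $\mathcal{A}_{l+1}$ from $\mathcal{A}_l$; all residual terms are multiples of $(v_1^{*}B_s+v_2^{*}D_s)$, which is already small. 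The final contradiction comes not from evaluating at $s=0$ but from the fact that on a short stopping-time interval $A_s\approx I_m$, $C_s\approx 0$, so $|v_1^{*}A_s+v_2^{*}C_s|$ is bounded below (this is where your ``$|v_2|$ small'' case enters), and combining this with the spanning condition (iii) gives a lower bound on $\sum_l\sum_{K\in\mathcal{A}_l}\int_0^\tau|(v_1^{*}A_s+v_2^{*}C_s)K|^2\,ds$ that is incompatible with the Norris estimates. Your remark about avoiding higher derivatives of $a_2,b$ is exactly right, and it is precisely the $(A,C)$-block iteration that achieves this: $\nabla_x a_2$ and $\nabla b$ only appear multiplied by the already-small $(v_1^{*}B_s+v_2^{*}D_s)$ factor.
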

\begin{Rem}
If the   coefficients   $a_1,a_2,b$ in Eq.$(\ref{1-1})$ also depend on $t$ and for any $T>0$, $t\rightarrow (a_1(t,0),a_2(t,0))$ and $t\rightarrow b(t,0)$ are bounded on $[0,T]$,  then the  Theorem $\ref{15}$, Theorem
 $\ref{2-39}$ and   Theorem $\ref{4-16}$   hold  also.
\end{Rem}

There is a natural relation between H\"{o}rmander conditions $(\mathbf{H})$
and Hypothesis $\ref{2-18}$
from the well-known geometric interpretation of H\"{o}rmander conditions. This relation   can  be proved directly by tedious calculations also. 
\begin{Rem}{\label{2-11}}
Assume $a_{1}\in C^{\infty }(\mathbb{R}^{m}\times
\mathbb{R}^{n};\mathbb{R}^{m})$, $a_{2}\in C^{\infty
}(\mathbb{R}^{m}\times \mathbb{R}^{n};\mathbb{R}^{n}),$ $b\in C^{\infty
}(\mathbb{R}^{m}\times \mathbb{R}^{n};\mathbb{R}^n\times
\mathbb{R}^d)$,  $n\leq d$,  $\det(b(x,y)\cdot b^{*}(x,y))\neq 0$.
Then the  H\"{o}rmander conditions  $(\mathbf{H})$ is equivalent to Hypothesis $\ref{2-18}$.
\end{Rem}

But Hypothesis $\ref{2-18}$ is weaker than  H\"{o}rmander conditions in some sense, the followings   are three examples.
\begin{example}
A  concrete example is the following stochastic differential
equation
\begin{eqnarray*}
\begin{cases}
dx_1(t)=x_2(t)dt+y_tdt\\
dx_2(t)=x_1(t)dt\\
dx_3(t)=x_2(t)dt+x_3(t)dt\\
dy_t=a_2(x_t,y_t)dt+b dW_t
\end{cases},
\end{eqnarray*}
where $x_t=(x_1(t),x_2(t),x_3(t))^{*}\in \mathbb{R}^3$, $y_t\in
\mathbb{R}^1$,  $a_2(x_1,x_2,x_3,y)$  only has one order
derivatives    and $b \in \mathbb{R}^1 \setminus\{0\}$
is a constant, then the  Hypothesis $\ref{2-18}$ holds, but the
H\"{o}rmander conditions   $(\mathbf{H})$  can't  be applied directly.
\end{example}
\begin{proof}
Set $a_1(x_1,x_2,x_3,y)=(x_2+y,x_1,x_2+x_3)^{*}$,  then
\begin{eqnarray*}
\nabla _x a_1=\left(\begin{matrix} 0&1& 0\\
1 &0&0\\
0&1&1
\end{matrix}\right),~\nabla _y a_1=\left(\begin{matrix} 1\\
0\\
0
\end{matrix}\right)
\end{eqnarray*}
 In this example, by calculating,
 \begin{eqnarray*}
 \mathcal{A}_1=\nabla _y a_1,~ \mathcal{A}_2= -\nabla _x a_1 \nabla _y a_1,~ \mathcal{A}_3= +(\nabla _x a_1)^2 \nabla _y a_1
 \end{eqnarray*}
and
 \begin{eqnarray*}
 \nabla _y a_1=\left(\begin{matrix} 1\\
0\\
0
\end{matrix}\right),    ~-\nabla _x a_1 \nabla _y a_1=-\left( \begin{matrix} 0\\1\\0\end{matrix}   \right),~ +(\nabla _x a_1)^2 \nabla _y a_1=\left(\begin{matrix} 1\\0\\1\end{matrix}\right)
 \end{eqnarray*}
 So  the vector space spanned  by~$ \{\mathcal{A}_{j},~j=1,2,3\}$ at any
point~$(x,y)$ is  $\mathbb{R}^3.$
\end{proof}

The following example is a special case for the SDE considered  in   \cite{Delarue}  with $n=3.$
\begin{example}
Consider the following SDE
\begin{eqnarray*}
\left\{
  \begin{split}
X_t^1&=x_1+\int_0^t F_1(s,X_s^1,X_s^2,X_s^3)ds+\int_0^t\sigma(s,X_s^1,X_s^2,X_s^3)dW_s,
\\
X_t^2&=x_2+\int_0^t F_2(s,X_s^1,X_s^2,X_s^3)ds,
\\
X_t^3&=x_3+\int_0^t F_3(s,X_s^2,X_s^3)ds.
\end{split}
\right.
\end{eqnarray*}
  If $\det(\sigma(0,x_1,x_2,x_3)\sigma^{*}(0,x_1,x_2,x_3))\neq 0$, by calculating,
\begin{eqnarray*}
  \mathcal{A}_1=\Bigg\{\Big(\begin{matrix} \nabla_{x_1}F_2\\
0
\end{matrix}\Big)\Bigg\}, \ \  \mathcal{A}_2=\Bigg\{\Big(\begin{matrix} \nabla_{x_1x_1}F_2\\
0
\end{matrix}\Big), \Big(\begin{matrix} G(x_1,x_2,x_3)\\
\nabla_{x_2}F_3\cdot \nabla_{x_1}F_2
\end{matrix}\Big) \Bigg\},
\end{eqnarray*}
for some function $G$.
The condition  in \cite{Delarue} is  $\nabla_{x_1}F_2 \cdot  \nabla_{x_2}F_3\neq 0.$
So the $\bf{(iii)}$  in Hypothesis $\ref{2-18}$  is the same as  that  in \cite{Delarue}.
And the
H\"{o}rmander conditions   $(\mathbf{H})$  can't  be applied directly.
\end{example}

The following example shows the condition $\bf{(ii)}$ in Hypothesis $\ref{2-18}$ is necessary in some sense.
\begin{example}
$W_t$ is a one dimension standard Brownian motion  and
$X_t=(X_t^1,X_t^2),Y_t=(Y_t^1,Y_t^2)$ satisfy the following equations
 \begin{eqnarray*}
\left\{
  \begin{split}
  X_t&= \Big(\begin{matrix}
  1 &0
\\ 0 &1
\end{matrix}\Big)Y_tdt,
\\ Y_t&=\Big(\begin{matrix}
  1 &1
\\ 0 &1
\end{matrix}\Big)Y_t
dt+\Big(\begin{matrix}
  1\\
1
\end{matrix}\Big)
dW_t.
\end{split}
\right.
\end{eqnarray*}
If set
\begin{eqnarray*}
V=\left(\begin{matrix}
  0 & 0 &1 &1 \\
  2& 2 &1 &-1\\
  2 & -3 &1 &-1 \\
  1 & 0 &-1 &1
\end{matrix}\right) \text{and} \
\left(
\begin{matrix}
  \overline{X}^1_t
\\
  \overline{X}^2_t
\\
  \overline{Y}^1_t
\\
 \overline{Y}^2_t
\end{matrix}\right)=V\left(
\begin{matrix}
  X^1_t
\\
  X^2_t
\\
 Y^1_t
\\
Y^2_t
\end{matrix}\right),
\end{eqnarray*}
then  $\overline{Y}_t^2\equiv0$,  so the Malliavin matrix for
$( \overline{X}^1_t,
  \overline{X}^2_t,
  \overline{Y}^1_t,
 \overline{Y}^2_t)$ singular a.s..
For  $V$ is invertible, so the the Malliavin matrix for $(X_t,Y_t)$ is  singular also.
 But   the Malliavin matrix for $Y_t$ is invertible by H\"{o}rmander Theorem,   and
the $\bf{(i),(iii)}$  in the  Hypothesis $\ref{2-18}$  hold.
 \end{example}

\subsection{Proof of Theorem  $\ref{15}$}
In \cite{Nualart}, the inverse of Jacobian matrix $ J_t^{-1}$ is regarded as a whole.   In this subsection,  we divide  $J_t^{-1}$ into four parts $\left(\begin{matrix}A_{t} & B_{t} \\C_{t} & D_{t}\end{matrix}%
\right)$  and  do more elaborate estimates,  we obtain  $%
\det(M_{T}^{-1}) \in L^{p}(\Omega,\mathbb{P}_{x,y}),$ $\forall p,T>0$  under some conditions weaker than H\"{o}rmander conditions.  Our  main method is similar  to that
 in \cite{Nualart},  but it also has some differences and its proof  is more complicated.
The  differences  depend heavily on the special form of
the Eq.$(\ref{1-1}).$
Before we   prove the  Theorem  $\ref{15}$, we introduce some notations and list the  Lemmas which will be used in the proof of Theorem   $\ref{15}$.

Assume $J_t^{-1}=\left(
\begin{matrix}
A_{t} & B_{t} \\
C_{t} & D_{t}
\end{matrix}
\right)$, $A_t$ is a matrix with dimension $m\times m,$  then
\begin{eqnarray}\label{2-5}
\left\{
\begin{split}
&dA_t=-\sum_{j=1}^dB_t \nabla_x b_j dW_j(t)-(A_t\nabla_x a_1+B_t \nabla_x a_2)dt+\sum_{j=1}^dB_t \nabla_y b_j  \nabla_x b_j dt,
\\&dB_t=-\sum_{j=1}^dB_t \nabla_y b_j dW_j(t)-(A_t\nabla_y a_1+B_t \nabla_y a_2)dt+\sum_{j=1}^dB_t \nabla_y b_j  \nabla_y b_j dt,
\\&dC_t=-\sum_{j=1}^dD_t \nabla_x b_j dW_j(t)-(C_t\nabla_x a_1+D_t \nabla_x a_2)dt+\sum_{j=1}^dD_t \nabla_y b_j  \nabla_x b_j dt,
\\&dD_t=-\sum_{j=1}^dD_t \nabla_y b_j dW_j(t)-(C_t\nabla_y a_1+D_t \nabla_y a_2)dt+\sum_{j=1}^dD_t \nabla_y b_j  \nabla_y b_j dt.
\end{split}\right.
\end{eqnarray}
For   the vector space spanned by  $\cup_{k=1}^{j_0} \mathcal{A}_{k}$ at point $(x,y)$ has dimension $m,$ then there exist two positive constants $R_1\ $and
$c$ such that
\begin{eqnarray} \label{2-2-1}
\sum_{j=1}^{j_{0}}\sum_{V\in \mathcal{A}_{j}}(v^{* }V(x',y'))^{2}\geq c
\end{eqnarray}
holds for all $v \in \mathbb{R}^m, ~|v|=1$ and $|(x',y')-(x,y)|\leq R_1$.

Fix  $R_2=\frac{1}{100}$, define the stopping time as
\begin{eqnarray}\label{2-122}
S=S(x,y):=\inf\big\{s\geq 0: \sup_{0\leq u\leq s}|(x_{u},y_{u})-(x,y)|\geq R_1 \ \text{or}\sup_{0\leq u\leq s}|J_{u}^{-1}-I_{m+n}|\geq R_2 \big\}.
\end{eqnarray}
Define the  adapted process
\begin{eqnarray}\label{2-12}
\lambda (s)=\inf_{|v|=1}\{v^{*
}b(x_{s},y_{s})b^{* }(x_{s},y_{s})v\}.
\end{eqnarray}
For  $ |\inf_{v}a_{v}-\inf_{v}b_{v}|\leq
\sup_{v}|a_{v}-b_{v}|$, so
\begin{eqnarray}\label{2-6}
\left\vert \lambda (s)-\lambda (t)\right\vert
\leq \| b(x_{s},y_{s})b^{*
}(x_{s},y_{s})-b(x_{t},y_{t})b^{* }(x_{t},y_{t})\|.
\end{eqnarray}
Then $\lambda(s)$ is continuous w.r.t $s$.
Since det$(b(x,y) b^{*}(x,y)) \neq 0,$ so $\lambda (0)>
0$.   For $R_3=\lambda(0)/2$,  we  define the stopping times
\begin{eqnarray} \label{2-137}
&&\tau'=\inf\{s>0:|\lambda (s)-\lambda (0)|\geq R_3
\},\\
&&\tau =\tau'\wedge S \wedge T.
\end{eqnarray}
Let  $j_0$  be as  in  Hypothesis $\ref{2-18}$. $~v=(v_{1}^*,v_{2}^*)^{*} \in \mathbb{R}^{m}\times \mathbb{R}^{n}$  with  $|v|=1$. Fix $q>8$  and set
\begin{eqnarray*}
&&F=\Bigg\{\sum_{j=1}^{j=d}\int_{0}^{T}|(v_{1}^{*}B_{s}+v_{2}^{*
}D_{s})b_{j}|^{2}ds\leq \epsilon ^{q^{3j_{0}+6}}\Bigg\},\\
&&E_{j}=\Bigg\{\sum_{K\in \mathcal{A}_{j}}\int_{0}^{\tau
}|(v_{1}^{*}A_{s}+v_{2}^{*}C_{s})K(x_{s},y_{s})|^{2}ds\leq
\epsilon ^{q^{3j_{0}+3-3j}}\Bigg\}, ~j=1,\cdots,j_0,\\
&&E=F\cap E_{1}\cap E_{2}\cdots\cap E_{j_{0}}.
\end{eqnarray*}
Denote by
\begin{eqnarray}\label{3-2-1}
\|v_{1}^{*}B_{.}+v_{2}^{*}D_{.}\|^{2}_{\frac{1}{4}}:=\sup_{s,r\in
\lbrack 0,\tau ]}\frac{\Big|\big(|v_{1}^{*}B_{s}+v_{2}^{*
}D_{s}|^{2}-|v_{1}^{*}B_{r}+v_{2}^{*}D_{r}|^{2}\big)\Big|}{
|s-r|^{\frac{1}{4}}}.
\end{eqnarray}
\begin{Rem}
In the definition of  $S, \ R_2=\frac{1}{100}$ is a technique  skill.
In the Lemma $\ref{2-1-6},$ we essentially  need $R_2$ small enough,  and  we  need $R_2$ is  finite in  other places.  Here, $R_1, R_3$ and $c$  depend on $(x,y)$.
\end{Rem}
From  the definition of $S$,  when $s\leq S$, $|(x_s,y_s)-(x,y)|\leq R_1$. So from $(\ref{2-2-1})$,
\begin{align} \label{2-2-3}
\sum_{j=1}^{j_{0}}\sum_{V\in \mathcal{A}_{j}}(v^{* }V(x_s,y_s))^2\geq c
\end{align}
holds for all $ s \leq S$ and  $v \in \mathbb{R}^m$ with  $|v|=1$.

\begin{lemma}
\label{2-22} (c.f. Lemma $6.14$, \cite{Hairer}). Let $f:[0,T_0]\rightarrow \mathbb{R}$ be continuous
differentiable and $ \alpha \in (0,1]$. Then
\begin{equation*}
\|\partial_t f\|_{\infty}=\|f\| _{1}\leq 4 \parallel f\parallel _{\infty}\max\big\{\frac{1}{T_0},~\parallel f\parallel _{\infty }^{-\frac{1}{1+\alpha }%
}\parallel \partial _{t}f\parallel _{\alpha }^{\frac{1}{1+\alpha
}}\big\},
\end{equation*}
where $\|f\|_{\alpha }=\sup\limits_{s,t \in [0,T_0],s \neq  t}\frac{|f(t)-f(s)|}{%
|t-s|^{\alpha }}$.
\end{lemma}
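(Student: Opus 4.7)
The plan is to bound $|\partial_t f(t_0)|$ at an arbitrary interior point $t_0\in[0,T_0]$ and then take the supremum, treating the interpolation as a standard Landau--Kolmogorov argument adapted to the $\alpha$-H\"older norm. Without loss of generality assume $A:=\partial_t f(t_0)>0$ (otherwise replace $f$ by $-f$, which preserves all three norms). The H\"older hypothesis gives the one-sided lower bound
\begin{equation*}
\partial_t f(s)\ \geq\ A\ -\ \|\partial_t f\|_\alpha\,|s-t_0|^\alpha
\end{equation*}
for every $s$ in any sub-interval of $[0,T_0]$ containing $t_0$.

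Next I would pick an integration direction so as to stay inside $[0,T_0]$: since one of the intervals $[t_0,t_0+h]$ or $[t_0-h,t_0]$ is contained in $[0,T_0]$ as long as $h\leq T_0/2$ (and for $t_0$ near one endpoint we can go up to $h\leq T_0$ on the free side), we fix $h\in(0,T_0]$ on the admissible side and integrate the lower bound. The fundamental theorem of calculus together with $|f(t_0\pm h)-f(t_0)|\leq 2\|f\|_\infty$ yields the key inequality
\begin{equation*}
A\,h\ \leq\ 2\|f\|_\infty\ +\ \frac{\|\partial_t f\|_\alpha}{\alpha+1}\,h^{\alpha+1},
\end{equation*}
i.e.\ $A\leq 2\|f\|_\infty/h+\|\partial_t f\|_\alpha h^\alpha/(\alpha+1)$, valid for every admissible $h$.

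The remainder is an optimization over $h$. The unconstrained minimizer of the right-hand side is $h^\star\sim(\|f\|_\infty/\|\partial_t f\|_\alpha)^{1/(1+\alpha)}$, which when substituted produces a constant multiple of $\|f\|_\infty^{\alpha/(1+\alpha)}\,\|\partial_t f\|_\alpha^{1/(1+\alpha)}$. I would then split into two cases: if $h^\star\leq T_0$ (equivalently, $\|\partial_t f\|_\alpha^{1/(1+\alpha)}\|f\|_\infty^{-1/(1+\alpha)}\geq 1/T_0$ up to the admissible absolute constant), set $h=h^\star$; otherwise the first term $2\|f\|_\infty/h$ is minimized by the largest admissible $h$, namely $h\sim T_0$, giving $A\leq C\|f\|_\infty/T_0$. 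Each case contributes exactly one of the two arguments of the maximum in the statement, so taking the sup over $t_0$ establishes the bound.

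The only delicate point, and effectively the sole obstacle, is tracking constants to land on the stated factor $4$ rather than some larger universal constant. This is a matter of choosing the integration direction to avoid losing a factor $2$ when $t_0$ is near an endpoint, and of not optimizing wastefully in the constrained case (one uses $\|\partial_t f\|_\alpha h^\alpha/(\alpha+1)\leq \|\partial_t f\|_\alpha T_0^\alpha$ combined with $T_0^{1+\alpha}\|\partial_t f\|_\alpha\leq \|f\|_\infty$ in the regime $1/T_0\geq \|f\|_\infty^{-1/(1+\alpha)}\|\partial_t f\|_\alpha^{1/(1+\alpha)}$ to absorb the second term into the first). Everything else is routine calculus, and no probabilistic input from earlier sections is needed.
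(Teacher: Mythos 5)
The paper does not prove this lemma at all---it simply cites Lemma~6.14 of Hairer--Mattingly---so your self-contained argument is useful content that is genuinely absent from the source. Your plan is the standard Landau--Kolmogorov interpolation and matches the proof that appears in the cited reference: bound $\partial_t f$ from below on a one-sided interval via the H\"older seminorm, integrate, use $|f(t_0\pm h)-f(t_0)|\le 2\|f\|_\infty$, and optimize over $h$. The key inequality $Ah\le 2\|f\|_\infty+\|\partial_t f\|_\alpha\,h^{1+\alpha}/(\alpha+1)$ is derived correctly, and the case split along the $\max$ is the right structure.

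The one concrete soft spot is the constant. You write that ``one of the intervals $[t_0,t_0+h]$ or $[t_0-h,t_0]$ is contained in $[0,T_0]$ as long as $h\le T_0/2$ (and for $t_0$ near one endpoint we can go up to $h\le T_0$),'' and later treat $h\in(0,T_0]$ as admissible. But the admissible range is $h\le\max(t_0,\,T_0-t_0)$, which is only $T_0/2$ when $t_0$ sits near the midpoint---precisely the worst case, and you do not get to choose where $\|\partial_t f\|_\infty$ is attained. With $h=T_0/2$ the first term $2\|f\|_\infty/h$ already equals $4\|f\|_\infty/T_0$, so after adding the second term $\|\partial_t f\|_\alpha (T_0/2)^\alpha/(\alpha+1)$ (which in the regime $\|\partial_t f\|_\alpha\le\|f\|_\infty/T_0^{1+\alpha}$ is bounded by $\|f\|_\infty/((\alpha+1)2^\alpha T_0)$) the constant you land on is $4+\tfrac{1}{(\alpha+1)2^\alpha}$, which approaches $5$ as $\alpha\downarrow 0$ and never drops to $4$. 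So the ``choose the integration direction to avoid losing a factor $2$'' remark does not actually rescue the exact constant, because the factor is lost at the midpoint, not at the endpoints. This does not affect how the lemma is used downstream (every application absorbs the constant), but as a proof of the statement with the stated prefactor $4$, the sketch has a gap that a bit more cleverness (or simply accepting a slightly larger universal constant, or quoting the source) is needed to close.
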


\begin{lemma}
\label{2-15} (c.f. Corollary 2.2.1, \cite{Nualart}). Let the  Hypothesis $\ref{2-18}$ hold,  then for any $p,T>0$,  there exists a finite constant $ C(T,p,x,y)$  such that
\begin{eqnarray*}
 \mathbb{E}\Big\{\sup_{0\leq t\leq T}|(x_t,y_t)|^{p}\Big\}\leq C(T,p,x,y).
\end{eqnarray*}
\end{lemma}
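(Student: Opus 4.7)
The statement is the classical moment bound for the solution of an SDE with globally Lipschitz and linearly-growing coefficients, specialized to equation (1.1) under Hypothesis 2.18. Under that hypothesis we have $a_1, a_2 \in C_b^1$ and $b \in C_b^1$, so $a_1, a_2, b$ are globally Lipschitz in $(x,y)$ and, since they are $C^1$ with bounded derivatives, they satisfy the linear growth condition
\[
|a_i(x',y')| \le K(1+|(x',y')|), \quad \|b(x',y')\| \le K(1+|(x',y')|),
\]
for some constant $K$ depending on $\|\nabla a\|_\infty$, $\|\nabla b\|_\infty$, $|a_1(0,0)|$, $|a_2(0,0)|$, $\|b(0,0)\|$. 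This is exactly the setting in which the cited Corollary 2.2.1 of \cite{Nualart} applies, so the cleanest route is simply to invoke it; below I sketch the self-contained argument in case one prefers to verify it by hand.

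The plan is to run the standard $L^p$-a priori estimate via It\^o's formula on $\varphi(z)=(1+|z|^2)^{p/2}$ with $z_t=(x_t,y_t)$. First I would apply It\^o's formula to obtain an expression of the form
\[
\varphi(z_t) = \varphi(z_0) + \int_0^t \mathcal{L}\varphi(z_s)\,ds + M_t,
\]
where $M_t$ is a local martingale built from $\int_0^s \nabla\varphi(z_u)\cdot(0,b(z_u))\,dW_u$ and $\mathcal{L}\varphi$ is the generator acting on $\varphi$. Using the linear growth of $a=(a_1,a_2)$ and $b$, one checks that $|\mathcal{L}\varphi(z)| \le C_p\,\varphi(z)$ for a constant $C_p$ depending only on $p$ and $K$. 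Likewise $|\nabla\varphi(z)|^2 \|b(z)\|^2 \le C_p \varphi(z)^2 / (1+|z|^2) \le C_p \varphi(z)$, so the quadratic variation of $M$ is integrable.

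Next I would take suprema and apply expectations, using a localization by stopping times $\tau_N = \inf\{t: |z_t| \ge N\}\wedge T$ to justify that $M_{\cdot\wedge\tau_N}$ is a genuine martingale. The Burkholder--Davis--Gundy inequality applied to $M_{\cdot\wedge\tau_N}$ gives
\[
\mathbb{E}\sup_{s\le t\wedge\tau_N}|M_s| \le C\,\mathbb{E}\Bigl(\int_0^{t\wedge\tau_N}|\nabla\varphi(z_u)|^2\|b(z_u)\|^2\,du\Bigr)^{1/2},
\]
which, after the linear-growth bound above and the elementary inequality $\sqrt{ab}\le \tfrac12 a + \tfrac12 b$, can be absorbed into a term of the form $\tfrac12 \mathbb{E}\sup_{s\le t\wedge \tau_N}\varphi(z_s)$ plus $C\int_0^t \mathbb{E}\sup_{u\le s\wedge\tau_N}\varphi(z_u)\,ds$. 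Combining with the drift estimate yields
\[
\mathbb{E}\sup_{s\le t\wedge\tau_N}\varphi(z_s) \le 2\varphi(z_0) + C_p\int_0^t \mathbb{E}\sup_{u\le s\wedge\tau_N}\varphi(z_u)\,ds.
\]

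Finally I would apply Gronwall's lemma on $[0,T]$ to conclude $\mathbb{E}\sup_{s\le T\wedge\tau_N}\varphi(z_s) \le 2\varphi(z_0) e^{C_p T}$, a bound independent of $N$. Letting $N\to\infty$ and using Fatou gives $\mathbb{E}\sup_{t\le T}(1+|z_t|^2)^{p/2} \le 2(1+|(x,y)|^2)^{p/2}e^{C_p T}$, whence the stated estimate with $C(T,p,x,y)$ explicit. The only mildly delicate point is making sure the BDG-generated $\sqrt{\cdots}$ term is handled before Gronwall: this is what forces the choice of $\varphi(z)=(1+|z|^2)^{p/2}$ (rather than $|z|^p$, which would be nonsmooth at the origin) and the absorption trick above; everything else is routine.
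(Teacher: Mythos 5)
Your proposal is correct and is exactly the standard $L^p$ moment estimate (It\^o on $(1+|z|^2)^{p/2}$, BDG with absorption, Gronwall) that underlies Corollary~2.2.1 of \cite{Nualart}, which is precisely the result the paper cites without further proof. The only substantive observation worth recording is the one you already make: Hypothesis~\ref{2-18} gives $a_1,a_2,b\in C_b^1$, so the coefficients are globally Lipschitz with linear growth and the cited corollary applies directly.
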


\begin{lemma}
\label{32} Let the  Hypothesis $\ref{2-18}$ hold, then  for any $p,T>0,$   there exists a finite constant $ C(T,p,x,y)$  such that
\begin{eqnarray*}
&&\mathbb{E}\Big\{\sup_{0\leq s\leq T}\|J_s^{-1}\|^{p}\Big\}\leq C(T,p,x,y),
\\ \nonumber
&&\mathbb{E}\Big\{\sup_{0\leq s\leq T}\|J_s\|^{p}\Big\}\leq C(T,p,x,y).
\end{eqnarray*}

\end{lemma}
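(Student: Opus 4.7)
The plan is to treat both (\ref{17}) and (\ref{2-1-1000}) as linear matrix-valued SDEs with random, but uniformly bounded, coefficients, and then apply the standard $L^p$ moment machinery for such SDEs. By Hypothesis \ref{2-18}(i), $a_1, a_2 \in C_b^1$, and by (ii), $b \in C_b^1$, so each of $\nabla_x a_1, \nabla_y a_1, \nabla_x a_2, \nabla_y a_2, \nabla_x b_j, \nabla_y b_j$ is a bounded function on $\mathbb{R}^m \times \mathbb{R}^n$. Consequently, all the matrix-valued coefficients that appear in (\ref{17}) and (\ref{2-1-1000}) are uniformly bounded by some finite constant $C$ depending only on $d, \|\nabla a\|_\infty, \|\nabla b\|_\infty$.

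For the estimate of $J_t$: I rewrite (\ref{2-1-1000}) in the compact form $dJ_t = \mathcal{M}_t J_t\, dt + \sum_{j=1}^d \mathcal{N}_{j,t} J_t\, dW_j(t)$ with $J_0 = I_{m+n}$, where $\|\mathcal{M}_t\|, \|\mathcal{N}_{j,t}\| \leq C$ almost surely. Fix an even integer $p \geq 2$ and apply It\^o's formula to $\|J_t\|^p$ with the Frobenius norm. The drift contribution is bounded by $C_p \|J_t\|^p$ while the quadratic variation of the resulting martingale part is bounded by $C_p \|J_t\|^{2p}$. Combining with the Burkholder--Davis--Gundy inequality yields
\begin{equation*}
\mathbb{E}\Big[\sup_{0 \leq s \leq t} \|J_s\|^p\Big] \leq C_p' + C_p' \int_0^t \mathbb{E}\Big[\sup_{0 \leq u \leq s} \|J_u\|^p\Big] ds,
\end{equation*}
and Gronwall's lemma closes the estimate for every $t \leq T$. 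For general $p > 0$, Jensen's inequality applied with a larger even integer exponent gives the result.

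For the estimate of $J_t^{-1}$: equation (\ref{17}) has exactly the same linear-SDE structure, only with right-multiplication by the coefficient matrices instead of left-multiplication. Its coefficient matrices, although slightly more complicated (they include the extra quadratic terms $\nabla_y b_j \nabla_x b_j$ and $\nabla_y b_j \nabla_y b_j$), are again uniformly bounded by a constant depending only on $d, \|\nabla a\|_\infty, \|\nabla b\|_\infty$. The identical It\^o/BDG/Gronwall argument applied to $\|J_t^{-1}\|^p$ then yields the second estimate.

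The main point to be careful about is nothing deep: this is a routine argument for linear SDEs with globally bounded coefficients, and the only minor subtlety is to choose an even integer exponent together with the Frobenius norm so that It\^o's formula applies without regularity issues, and to absorb the supremum of the martingale via BDG \emph{before} closing the loop with Gronwall's lemma. In particular, the claim that $C$ depends on $(x,y)$ in the statement is in fact unnecessary: the bound is uniform in the starting point.
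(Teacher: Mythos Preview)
Your proof is correct and is essentially the same approach as the paper's: the paper simply says the result follows from (\ref{17}), (\ref{2-1-1000}) and Lemma~2.2.1 in \cite{Nualart}, which is exactly the standard It\^o/BDG/Gronwall moment estimate for linear SDEs with bounded coefficients that you have spelled out. Your remark that the constant is in fact independent of $(x,y)$ is also correct.
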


\begin{proof}
It directly   follows from   $(\ref{17})$  $(\ref{2-1-1000})$  and    Lemma 2.2.1   in  \cite{Nualart}.
\end{proof}
\begin{lemma}
\label{22}
Let the  Hypothesis $\ref{2-18}$ hold, then  for any $p>0$,    there exists a finite constant $ C(p,x,y)$  such that
\begin{eqnarray*}
\mathbb{P}\{S<\epsilon \}\leq  C(p,x,y) \epsilon ^{p}, ~ \forall \epsilon>0.
\end{eqnarray*}
\end{lemma}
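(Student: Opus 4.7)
The plan is to bound the two events that can trigger $S<\epsilon$ separately, using moment estimates together with Chebyshev's inequality. Observe that
\begin{eqnarray*}
\{S<\epsilon\}\subset \Bigl\{\sup_{0\leq u\leq \epsilon}|(x_u,y_u)-(x,y)|\geq R_1\Bigr\}\cup\Bigl\{\sup_{0\leq u\leq \epsilon}\|J_u^{-1}-I_{m+n}\|\geq R_2\Bigr\},
\end{eqnarray*}
so it suffices to show each of these probabilities is $O(\epsilon^{p})$ for every $p>0$, with a constant that may depend on $p$ and $(x,y)$.

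For the first event, I would decompose
\begin{eqnarray*}
(x_u,y_u)-(x,y)=\Bigl(\int_0^u a_1(x_s,y_s)ds,\ \int_0^u a_2(x_s,y_s)ds+\int_0^u b(x_s,y_s)dW_s\Bigr).
\end{eqnarray*}
Since $a_1,a_2,b$ have bounded derivatives, they grow at most linearly, so by Lemma $\ref{2-15}$, for every $q\geq 1$ the random variables $\sup_{0\leq s\leq T}|a_i(x_s,y_s)|$ and $\sup_{0\leq s\leq T}\|b(x_s,y_s)\|$ lie in $L^q(\mathbb{P}_{x,y})$. The two Lebesgue integrals are then pathwise $O(\epsilon)$ in $L^q$, while the Burkholder--Davis--Gundy inequality applied to the martingale part gives
\begin{eqnarray*}
\mathbb{E}\Bigl[\sup_{0\leq u\leq \epsilon}\Bigl|\int_0^u b(x_s,y_s)dW_s\Bigr|^{2q}\Bigr]\leq C_q\,\mathbb{E}\Bigl[\Bigl(\int_0^\epsilon \|b(x_s,y_s)\|^2 ds\Bigr)^{q}\Bigr]\leq C_q'\,\epsilon^{q}.
\end{eqnarray*}
Applying Chebyshev's inequality and taking $q$ as large as needed yields $\mathbb{P}\{\sup_{u\leq\epsilon}|(x_u,y_u)-(x,y)|\geq R_1\}\leq C\epsilon^{p}$ for any prescribed $p$.

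The second event is handled in exactly the same fashion, but starting from the integral representation (\ref{17}) for $J_u^{-1}-I_{m+n}$. The coefficients appearing on the right-hand side of (\ref{17}) involve $\nabla a_1,\nabla a_2,\nabla b$ and $\nabla b\cdot \nabla b$, all of which are bounded by Hypothesis $\ref{2-18}$. Multiplied by $J_s^{-1}$, whose supremum over $[0,T]$ has all moments by Lemma $\ref{32}$, an application of H\"older's and BDG's inequalities gives, for every $q\geq 1$,
\begin{eqnarray*}
\mathbb{E}\Bigl[\sup_{0\leq u\leq \epsilon}\|J_u^{-1}-I_{m+n}\|^{2q}\Bigr]\leq C_q\,\epsilon^{q},
\end{eqnarray*}
and Chebyshev again converts this into $\mathbb{P}\{\sup_{u\leq\epsilon}\|J_u^{-1}-I_{m+n}\|\geq R_2\}\leq C(p,x,y)\epsilon^{p}$. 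Combining the two estimates proves the lemma. The only genuine (mild) obstacle is keeping track of the linear growth of $a_1,a_2,b$ when bounding the integrand moments, which is handled by invoking Lemma $\ref{2-15}$ before applying BDG.
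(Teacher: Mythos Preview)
Your proof is correct and follows essentially the same route as the paper: split $\{S<\epsilon\}$ into the two defining events, apply Chebyshev to pass to moments, and then control $\mathbb{E}\sup_{u\le\epsilon}|(x_u,y_u)-(x,y)|^{2q}$ and $\mathbb{E}\sup_{u\le\epsilon}\|J_u^{-1}-I_{m+n}\|^{2q}$ via Burkholder--Davis--Gundy and H\"older. The paper's proof is more terse (it simply writes the Chebyshev step and then says ``this Lemma comes from Burkholder's and H\"older's inequalities''), but the underlying argument is identical to yours.
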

\begin{proof} For any $p>0$, it holds that
\begin{eqnarray*}
\begin{split}
 \mathbb{P}\Big\{S<\epsilon \Big\}&\leq \mathbb{P}\Big\{\sup_{0\leq s\leq \epsilon }|(x_{s},y_{s})-(x,y)|\geq R_1\Big\}
 +\mathbb{P}\Big\{\sup_{0\leq s\leq \epsilon }|J_{s}^{-1}-I_{m+n}|\geq R_2\Big\} \\
&\leq C(p,x,y)\mathbb{E} \Big\{ \sup_{0\leq s\leq \epsilon }|(x_{s},y_{s})-(x,y)|^{2p}\Big\}
 +C(p,x,y)\mathbb{E}\Big\{ \sup_{0\leq s\leq
\epsilon }|J_{s}^{-1}-I_{m+n}|^{2p}\Big\}.
\end{split}
\end{eqnarray*}
Then this Lemma comes from Burkholder's and H\"{o}lder's inequalities.
\end{proof}
\begin{lemma}
\label{2-290}
Let the  Hypothesis $\ref{2-18}$ hold, then  for any $p>0$,    there exists a finite constant $ C(p,T,x,y)$  such that
\begin{eqnarray*}
\mathbb{P}\{\tau<\epsilon \}\leq  C(p,T,x,y) \epsilon ^{p}, ~ \forall \epsilon>0.
\end{eqnarray*}
\end{lemma}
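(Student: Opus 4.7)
The strategy is to separately control the three clocks $\tau'$, $S$ and $T$ whose minimum defines $\tau$. For $\epsilon>T$ one has $\mathbb{P}\{\tau<\epsilon\}\leq 1\leq T^{-p}\epsilon^p$, so I may assume $\epsilon\in(0,T]$. Then
\[
\{\tau<\epsilon\}\subseteq \{S<\epsilon\}\cup\{\tau'<\epsilon,\ S\geq\epsilon\},
\]
and the first event has polynomial decay of every order by Lemma $\ref{22}$. It therefore suffices to control the second event.

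On $\{S\geq\epsilon\}$ the path $(x_s,y_s)$ stays inside $\overline{B}((x,y),R_1)$ for every $s\in[0,\epsilon]$. Since $b\in C_b^1$, its first derivatives are globally bounded, and $b$ itself is bounded on this compact ball by continuity; hence $bb^*$ is Lipschitz on $\overline{B}((x,y),R_1)$ with some finite constant $L_0=L_0(x,y)$. Continuity of $\lambda(\cdot)$ forces $|\lambda(\tau')-\lambda(0)|=R_3$ whenever $\tau'<\epsilon$, so $(\ref{2-6})$ together with the Lipschitz bound yields
\[
R_3\leq \|b(x_{\tau'},y_{\tau'})b^*(x_{\tau'},y_{\tau'})-b(x,y)b^*(x,y)\|\leq L_0\,|(x_{\tau'},y_{\tau'})-(x,y)|.
\]
Consequently
\[
\{\tau'<\epsilon,\ S\geq\epsilon\}\subseteq \Big\{\sup_{0\leq s\leq \epsilon}|(x_s,y_s)-(x,y)|\geq R_3/L_0\Big\}.
\]

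To bound the latter probability by $C(p,T,x,y)\epsilon^p$ I would apply Markov's inequality to $|(x_s,y_s)-(x,y)|^{2p}$, then use the SDE $(\ref{1-1})$ together with Burkholder--Davis--Gundy, H\"older's inequality, and Lemma $\ref{2-15}$ to control the moments of $a$ and $b$ evaluated along the path. Because both $a$ and $b$ have at most linear growth (being in $C_b^1$), this produces $\mathbb{E}\sup_{0\leq s\leq\epsilon}|(x_s,y_s)-(x,y)|^{2p}\leq C\epsilon^p$ for $\epsilon\in(0,T]$, the diffusion term contributing the dominant $\epsilon^p$ factor. Combining with Lemma $\ref{22}$ delivers the stated estimate.

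The main subtle point is that $bb^*$ is \emph{not} globally Lipschitz under Hypothesis $\ref{2-18}$: since $b$ is only in $C_b^1$, its norm may grow linearly, and the factor $\|b(x_{\tau'},y_{\tau'})\|+\|b(x,y)\|$ appearing in any Lipschitz estimate for $bb^*$ is \emph{a priori} unbounded. The spatial localization built into $S$ is precisely what converts this factor into a finite constant $L_0$, which is why it was necessary to define $\tau=\tau'\wedge S\wedge T$ rather than $\tau'\wedge T$.
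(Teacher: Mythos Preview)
Your argument is correct, but it takes a different route from the paper's. The paper uses the cruder union bound $\mathbb{P}\{\tau<\epsilon\}\leq \mathbb{P}\{S<\epsilon\}+\mathbb{P}\{\tau'<\epsilon\}+\mathbb{P}\{T<\epsilon\}$ and then controls $\mathbb{P}\{\tau'<\epsilon\}$ \emph{on its own}, without intersecting with $\{S\geq\epsilon\}$. To bypass the non-global-Lipschitz problem you identify, the paper writes
\[
b_{kj}(z_s)b_{ij}(z_s)-b_{kj}(z_0)b_{ij}(z_0)
=(\Delta b_{kj})(\Delta b_{ij})+b_{kj}(z_0)\,\Delta b_{ij}+b_{ij}(z_0)\,\Delta b_{kj},
\]
with $z_0=(x,y)$, $z_s=(x_s,y_s)$ and $\Delta b_{ij}=b_{ij}(z_s)-b_{ij}(z_0)$. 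Each increment is paired either with the \emph{fixed} number $b_{ij}(x,y)$ or with another increment, so the growing factor $\|b(z_s)\|$ never appears; the mean value theorem and $\nabla b\in L^\infty$ then reduce everything to moments of $|(x_s,y_s)-(x,y)|$, handled by Burkholder--Davis--Gundy and H\"older exactly as in your last step.

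Your localization via $\{S\geq\epsilon\}$ is equally valid and arguably more conceptual: it makes explicit that a local Lipschitz constant suffices. The paper's algebraic identity is a touch slicker but obscures this. One small correction to your closing remark: the presence of $S$ in the definition of $\tau$ is not actually needed for \emph{this} lemma under the paper's approach; $S$ is there because later arguments (notably Lemma~\ref{2-1-6} and the use of $(\ref{2-2-3})$) require the path and the inverse Jacobian to stay near their initial values.
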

\begin{proof}
From Lemma  $\ref{22}$ and the fact
\begin{eqnarray*}
\mathbb{P}\{\tau <\epsilon ^{ }\}
&\leq &\mathbb{P}\{S<\epsilon ^{}\}+\mathbb{P}\{\tau' <\epsilon ^{ }\}+\mathbb{P}\{T <\epsilon \},
\end{eqnarray*}
 we only need to estimate  $\mathbb{P}\{\tau' <\epsilon ^{ }\}.$
For any $p>0$
\begin{eqnarray}\label{2-4}
\begin{split}
\mathbb{P}\{\tau'<\epsilon^{ }\} &\leq  \mathbb{P}\Big\{\sup_{0\leq
r\leq \epsilon ^{ }}|\lambda (s)-\lambda (0)|\geq R_3 \Big\}
\\
&\leq C(p,x,y)\mathbb{E}\Big\{\sup_{0\leq s\leq \epsilon ^{ }}|\lambda
(s)-\lambda (0)|^{2p}\Big\}.
\end{split}
\end{eqnarray}
Due to    $ |\inf_{v}a_{v}-\inf_{v}b_{v}|\leq \sup_{v}|a_{v}-b_{v}|,$
\begin{equation}\label{3-9}
  \begin{split}
    &\mathbb{E} \Big\{ \sup_{0\leq s\leq \epsilon ^{ }}|\lambda (s)-\lambda(0)|^{2p}\Big\}\\
    &\leq C(p)\sum_{\substack{ i,k=1,\cdots ,n \\ j=1,\cdots ,d}} \mathbb{E}\Big\{\sup_{0\leq s\leq
\epsilon ^{ }}\Big| b_{kj}(x_{s},y_{s})b_{ij}(x_{s},y_{s}) -b_{kj}(x,y)b_{ij}(x,y)\Big|^{2p}\Big\}.
  \end{split}
\end{equation}
Note that
\begin{eqnarray}\label{3-10}
\begin{split}
&\ b_{kj}(x_{s},y_{s})b_{ij}(x_{s},y_{s})-b_{kj}(x,y)b_{ij}(x,y)
\\  &=(b_{kj}(x_{s},y_{s})-b_{kj}(x,y))(b_{ij}(x_s,y_s)-b_{ij}(x,y))
\\  &\ \ \  +b_{kj}(x,y)(b_{ij}(x_s,y_s)-b_{ij}(x,y)
\\ & \ \ \  +b_{ij}(x,y)(b_{kj}(x_{s},y_{s})-b_{kj}(x,y)).
\end{split}
\end{eqnarray}
From  $(\ref{2-4})(\ref{3-9})(\ref{3-10})$,
\begin{eqnarray*}
\mathbb{P}\{\tau'<\epsilon \}\leq  C(p,x,y) \Bigg[\mathbb{E} \Big\{ \sup_{0\leq s\leq \epsilon ^{ }}\left|
b_{ij}(x_{s},y_{s})-b_{ij}(x,y)\right| ^{2p}\Big\}+\mathbb{E}\Big\{ \sup_{0\leq s\leq \epsilon ^{ }}\left|
b_{ij}(x_{s},y_{s})-b_{ij}(x,y)\right| ^{4p}\Big\}\Bigg].
\end{eqnarray*}
So this Lemma comes from Burkholder's and H\"{o}lder's inequalities  and the fact
\begin{eqnarray*}
b_{ij}(x_{s},y_{s})-b_{ij}(x,y)=\langle \nabla b_{ij}(\xi,\eta),(x_s,y_s)-(x,y)\rangle,
\end{eqnarray*}
here  $(\xi,\eta)$ is some point
depending  on $(x_s,y_s)$ and $(x,y).$
\end{proof}

\begin{lemma}\label{23}
Let~$\sigma$~be a finite stopping time with bound $c_{\sigma} <\infty$,
and  there exists~$\tilde{p}>0$~such that
\begin{eqnarray*}
\label{2-33}
\mathbb{P}\{\sigma<\epsilon ^{ }\}\leq C(c_{\sigma},\tilde{p})\epsilon ^{
\tilde{p}}, ~ \forall  \epsilon >0,
\end{eqnarray*}
holds for some constant $C(c_\sigma,\tilde{p}).$  Assume  $\gamma (t)=(\gamma _{1}(t),...,\gamma _{d}(t)), ~u(t)=(u_{1}(t),...u_{d}(t))$ are continuous  adapted processes,  $W(t)=(W_1(t),\cdots,W_d(t))^{*}$ is a  standard Wiener process,  $a(t), \tilde{y}(t)\in \mathbb{R}$~and for $t \in [0,c_\sigma]$
\begin{eqnarray*}
\label{2-200}
a(t) &=&\alpha +\int_{0}^{t}\beta (s)ds+\int_{0}^{t}\gamma
(s)dW(s), \\
\label{2-201}
\tilde{y}(t) &=&\tilde{y}+\int_{0}^{t}a(s)ds+\int_{0}^{t}u(s)dW(s).\
\end{eqnarray*}%
Suppose for some $p,\tilde{c}>0,$
\begin{eqnarray*}
\label{2-126}
\mathbb{E}\Big\{ \sup_{0\leq t\leq \sigma }\left(|\beta (t)|+|\gamma
(t)|+|a(t)|+|u(t)|\right)^{p}\Big\}\leq \tilde{c}<\infty .
\end{eqnarray*}
Then for any  three positive  numbers~$(q,r,v)$ satisfy
  $2q-36r-9v>16$,  there exists~$\epsilon
_{0}=\epsilon _{0}(c_\sigma,q,r,v)$~such that for any $\epsilon
<\epsilon _{0},$
\begin{eqnarray*}
\mathbb{P}\left( \int_{0}^{\sigma}\tilde{y}(t)^{2}dt<\epsilon ^{q},\int_{0}^{\sigma
}(|a(t)|^{2}+|u(t)|^{2})dt\geq \epsilon \right) \leq \tilde{c}\epsilon
^{rp}+\exp{(-\epsilon ^{-\frac{v}{4}})}+C(c_\sigma,\tilde{p})\epsilon ^{\tilde{p}}.
\end{eqnarray*}
\end{lemma}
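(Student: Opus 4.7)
The statement is a Norris-type lemma tailored to stopping times. The plan is to reduce the target event by three kinds of exceptional sets matching the three terms on the right-hand side of the conclusion, and then derive a contradiction from $\int_0^\sigma\tilde y^2\,dt<\epsilon^q$ combined with $\int_0^\sigma(|a|^2+|u|^2)\,dt\ge\epsilon$ on the complementary ``good'' event. First I would write
$$\mathbb{P}(\mathrm{target})\le \mathbb{P}(\sigma<\epsilon^{\tilde q})+\mathbb{P}(G_1^c)+\mathbb{P}(G_2^c)+\mathbb{P}(\mathrm{target}\cap G),$$
where $G_1=\{\sup_{[0,\sigma]}(|\beta|+|\gamma|+|a|+|u|)\le\epsilon^{-r_1}\}$, $G_2$ is the event on which certain stochastic integrals (below) are bounded by $\epsilon^\mu$, and $G=\{\sigma\ge\epsilon^{\tilde q}\}\cap G_1\cap G_2$. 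The term $\mathbb{P}(\sigma<\epsilon^{\tilde q})$ yields the $C(c_\sigma,\tilde p)\epsilon^{\tilde p}$ piece (after choosing $\tilde q$ so that $\tilde p\,\tilde q\ge\tilde p$), Markov's inequality on the $p$-th moment hypothesis yields the $\tilde c\epsilon^{rp}$ piece with $r_1$ a multiple of $r$, and Bernstein's inequality for the relevant martingales (whose quadratic variations are polynomially controlled on $G_1$) yields the $\exp(-\epsilon^{-v/4})$ piece. It then remains to rule out $\mathrm{target}\cap G$ for $\epsilon<\epsilon_0$.

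On $G$, a Kolmogorov/BDG argument gives $\alpha$-Hölder control $\|\tilde y\|_\alpha+\|a\|_\alpha\le\epsilon^{-r_2}$ for some $\alpha\in(0,1/2)$ (possibly after absorbing another Markov slice into the $\epsilon^{rp}$ bucket). From $\int_0^\sigma\tilde y^2\,dt<\epsilon^q$ combined with this Hölder control, a standard packing argument (near a maximum of $|\tilde y|$, the process stays at least half the maximum on an interval of length $\sim(\|\tilde y\|_\infty/\epsilon^{-r_2})^{1/\alpha}$) converts $L^2$-smallness into $L^\infty$-smallness: $\|\tilde y\|_{L^\infty[0,\sigma]}\le C\epsilon^{\mu_1}$ where $\mu_1=q\alpha/(2\alpha+1)-O(r)$ remains positive provided $q$ is large compared to $r$. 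Applying It\^o to $\tilde y^2$ gives
$$\int_0^\sigma|u|^2\,dt=\tilde y(\sigma)^2-\tilde y^2-2\int_0^\sigma\tilde y\,a\,dt-2\int_0^\sigma\tilde y\,u\,dW,$$
the first three terms being $O(\|\tilde y\|_\infty+\|\tilde y\|_\infty^2)$, and the fourth being $\le\epsilon^\mu$ on $G_2$ (its quadratic variation is $\int\tilde y^2|u|^2\,dt\le c_\sigma\|\tilde y\|_\infty^2\sup|u|^2$, controlled on $G_1$). Hence $\int_0^\sigma|u|^2\,dt\le C\epsilon^{\mu_2}$.

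Substituting this back into $\tilde y(t)-\tilde y=\int_0^t a\,ds+\int_0^t u\,dW$ and using BDG together with the smallness of $\int u^2$ (so that $\int_0^\cdot u\,dW$ is uniformly small on $G_2$) yields $\|\int_0^\cdot a(s)\,ds\|_{L^\infty[0,\sigma]}\le C\epsilon^{\mu_3}$. The interpolation inequality of Lemma \ref{2-22}, applied to $f(t)=\int_0^t a(s)\,ds$ with $\partial_tf=a$ and Hölder norm $\|a\|_\alpha\le\epsilon^{-r_2}$, then transfers this into $\|a\|_\infty\le C\epsilon^{\mu_4}$ (the nonrestrictive branch of the max in Lemma \ref{2-22} is activated because $\sigma\ge\epsilon^{\tilde q}$ is long compared with the relevant scale). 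Thus $\int_0^\sigma|a|^2\,dt\le c_\sigma\|a\|_\infty^2\le C\epsilon^{2\mu_4}$, and combined with the bound on $\int|u|^2$ this gives $\int_0^\sigma(|a|^2+|u|^2)\,dt<\epsilon$, contradicting the defining condition of the target event.

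The main obstacle is the exponent arithmetic. Each interpolation step (the packing argument for $\tilde y$, and Lemma \ref{2-22} for $a$) costs a factor of the form $\alpha/(2\alpha+1)$ or $\alpha/(1+\alpha)$ multiplying $q$, while the Markov and Bernstein exceptional sets consume powers $\epsilon^{r_1}$ and $\epsilon^{v/4}$ through the bounds on coefficients and on the martingale thresholds that appear in the quadratic-variation control. Propagating these through the two interpolation stages and the It\^o identity produces the explicit budget $2q-36r-9v>16$; one must choose $\alpha$, $\tilde q$, $r_1$ and $r_2$ consistently so that every intermediate exponent $\mu_i$ is strictly positive for all $\epsilon<\epsilon_0(c_\sigma,q,r,v)$, which is exactly what the hypothesis $2q-36r-9v>16$ guarantees.
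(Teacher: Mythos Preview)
Your overall strategy---localize to a good event and derive a contradiction on it---is sound and matches the paper's three-term decomposition. However, the route you take on the good event is different from the paper's. The paper defers to Nualart's proof of the Norris lemma (only the companion Lemma~\ref{25} is written out explicitly), which on the good event $A_1=\{\int_0^\sigma\tilde y^2<\epsilon^q,\ \zeta=\sigma,\ \sigma\ge\epsilon\}$ (with $\zeta$ the first time $|\beta|+|\gamma|+|a|+|u|$ exceeds $\epsilon^{-r}$) applies It\^o to $\tilde y^2$ and integrates once more in $t$ to bound $\int_0^\sigma\langle M\rangle_t\,dt$ (where $M_t=\int_0^t u\,dW$), then uses the elementary trick $\langle M\rangle_\sigma\le\gamma^{-1}\int_0^\sigma\langle M\rangle_t\,dt+\gamma\epsilon^{-2r}$ optimised over $\gamma$ to get $\int_0^\sigma|u|^2$ small; a second pass with $b_t=\int_0^t a\,ds$ and a second Bernstein event for $\int b\gamma\,dW$ then controls $\int_0^\sigma|a|^2$. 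No H\"older interpolation, no packing argument, and no appeal to Lemma~\ref{2-22} are used.

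Your route---upgrade $\int\tilde y^2<\epsilon^q$ to $\|\tilde y\|_\infty$ small via packing, apply It\^o, then invoke Lemma~\ref{2-22} on $\int_0^\cdot a$---is closer in spirit to the paper's proof of Lemma~\ref{19}; it can be made to work, but the explicit budget $2q-36r-9v>16$ is tied to Nualart's $\gamma$-trick and his two Bernstein thresholds, and would not emerge from your interpolation exponents $\alpha/(2\alpha+1)$ and $\alpha/(1+\alpha)$ without separate bookkeeping that you have not carried out. A minor point: obtaining pathwise H\"older control of $\tilde y$ and $a$ on $G$ either requires extra Bernstein events (fine, they fold into $\exp(-\epsilon^{-v/4})$) or the ``another Markov slice'' you mention, which would replace the constant $\tilde c$ by a multiple of it and no longer match the stated bound exactly.
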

The proof of Lemma $\ref{23}$ is postponed to  Appendix $\ref{A-2}$

\begin{lemma}\label{25}
Let $\sigma $ be a finite stopping time with bound $c_\sigma<\infty$,
and  there exists  $\tilde{p}>2$, such that
\begin{eqnarray} \label{2-100}
\mathbb{P}\{\sigma <\epsilon ^{ }\}\leq  C(c_\sigma,\tilde{p}) \epsilon ^{
\tilde{p}},~\forall \epsilon >0
\end{eqnarray}
holds for some constant $C(c_\sigma,\tilde{p}).$  Consider the following one dimensional stochastic differential equation
\begin{eqnarray}
\label{2-204}
\tilde{y}(t)=\tilde{y}+\int_{0}^{t}a(s)ds+\int_{0}^{t}u(s)dW(s),~t\in \lbrack 0,c_\sigma],
\end{eqnarray}
where $u(s)=(u_1(s),\cdots,u_d(s))$ is a  continuous adapted  process,  $W(t)=(W_1(t),\cdots,W_d(t))^{*}$ is a d-dimensional standard
Wiener process.  $a(t),u(t)$  satisfy
\begin{eqnarray} \label{2-101}
\mathbb{E}\Big\{  \sup_{0\leq t\leq \sigma }(|a(t)|+|u(t)|)^{p}\Big\}  \leq  \tilde{c}<\infty,
\end{eqnarray}%
for some $p,\tilde{c}>0$.

Then for any  three  positive numbers~$(q,r,v)$~satisfy
$2q>8+20r+v$,  there exists $\epsilon
_{0}=\epsilon _{0}(c_\sigma,q,r,v)$~such that for any $\epsilon \leq \epsilon _{0}$,
\begin{eqnarray*}
\mathbb{P}\left(\int_{0}^{\sigma }\tilde{y}(t)^{2}dt<\epsilon ^{q},~\int_{0}^{\sigma}|u(t)|^{2}dt\geq
\epsilon \right)\leq \tilde{c}\epsilon ^{rp}+\exp \{-\epsilon ^{-\frac{v}{4}}\}+ C(c_\sigma,\tilde{p})
\epsilon ^{ \tilde{p}}.
\end{eqnarray*}
\end{lemma}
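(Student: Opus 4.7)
The plan follows the Norris-lemma template for one-dimensional It\^{o} processes. The starting point is the identity obtained by It\^{o}'s formula applied to $\tilde{y}(t)^2$:
\[
\int_0^{\sigma}|u(s)|^2\,ds \;=\; \tilde{y}(\sigma)^2-\tilde{y}^2 - 2\int_0^{\sigma}\tilde{y}(s)a(s)\,ds - 2\int_0^{\sigma}\tilde{y}(s)u(s)\,dW(s).
\]
The strategy is that the hypothesis $\int_0^\sigma \tilde{y}^2\,ds<\epsilon^q$, together with H\"{o}lder continuity of $\tilde{y}$, will force $\sup_{[0,\sigma]}|\tilde{y}|$ to be small; this kills the boundary and drift contributions on the right-hand side, and the quadratic variation of the remaining martingale term becomes small enough to give a stretched exponential bound. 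The three error sources in the conclusion correspond respectively to failure of moment/regularity controls (the $\tilde{c}\epsilon^{rp}$ term), the exponential-martingale tail (the $\exp(-\epsilon^{-v/4})$ term), and the small-$\sigma$ exclusion using \eqref{2-100} (the $C(c_\sigma,\tilde{p})\epsilon^{\tilde{p}}$ term).

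First I would introduce a ``good'' event
\[
G_\epsilon \;=\; \Big\{\sigma\ge \epsilon,\ \sup_{[0,c_\sigma]}(|a|+|u|)\le\epsilon^{-r},\ \|\tilde{y}\|_{C^\alpha([0,c_\sigma])}\le\epsilon^{-r}\Big\},
\]
with $\alpha\in(0,1/2)$ chosen close to $1/2$. Chebyshev together with hypothesis \eqref{2-101} controls $\sup(|a|+|u|)$; a Kolmogorov/BDG estimate for the H\"{o}lder seminorm of $t\mapsto\int_0^t u\,dW$ controls the $C^\alpha$ seminorm of $\tilde{y}$; and \eqref{2-100} handles $\{\sigma<\epsilon\}$. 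Together these give $\mathbb{P}(G_\epsilon^c)\le\tilde{c}\epsilon^{rp}+C(c_\sigma,\tilde{p})\epsilon^{\tilde{p}}$. On $G_\epsilon\cap\{\int_0^\sigma \tilde{y}^2\,ds<\epsilon^q\}$, if $M:=\sup_{[0,\sigma]}|\tilde{y}|$ is attained at some $t^\star$, then the H\"{o}lder bound gives $|\tilde{y}|\ge M/2$ on an interval of length at least $(M\epsilon^r/2)^{1/\alpha}$ inside $[0,\sigma]$, so $\epsilon^q>\int_0^\sigma\tilde{y}^2\,ds\gtrsim M^{2+1/\alpha}\epsilon^{r/\alpha}$, yielding $M\lesssim\epsilon^{(q\alpha-r)/(2\alpha+1)}$; with $\alpha$ close to $1/2$ this reads $M\lesssim\epsilon^{q/4-r/2-\delta}$ for arbitrarily small $\delta>0$.

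Plugging this $M$ back into the It\^{o} identity shows that on $G_\epsilon$ the boundary and drift contributions are bounded by $2M^2+2Mc_\sigma\epsilon^{-r}$, hence by $\epsilon/4$ once $q>4+6r+4\delta$, which is weaker than the hypothesis. For the stochastic integral, its quadratic variation on $G_\epsilon$ satisfies $\int_0^\sigma \tilde{y}^2|u|^2\,ds\le M^2 c_\sigma\epsilon^{-2r}\lesssim\epsilon^{q/2-3r-2\delta}$, so the exponential-martingale inequality gives
\[
\mathbb{P}\Big(\Big|\int_0^\sigma \tilde{y}u\,dW\Big|\ge\epsilon/4,\ G_\epsilon\Big)\;\le\; 2\exp\!\big(-c\,\epsilon^{-(q/2-3r-2-2\delta)}\big),
\]
which is $\le\exp(-\epsilon^{-v/4})$ provided $q/2-3r-2>v/4+2\delta$; the hypothesis $2q>8+20r+v$ leaves room for $\delta$. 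Combining the three error sources with the It\^{o}-derived contradiction on $G_\epsilon$ delivers the stated bound. The main obstacle is the exponent bookkeeping: choosing $\alpha$ and (if needed) splitting $r$ into distinct auxiliary exponents for the separate events inside $G_\epsilon$ so that all constraints collapse to the single inequality $2q>8+20r+v$, while also handling the random upper limit $\sigma$ in It\^{o}'s formula (which is standard provided one localizes correctly and uses $\sigma\ge\epsilon$ to ensure that the small H\"{o}lder interval actually sits inside $[0,\sigma]$).
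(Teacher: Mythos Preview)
Your approach is a valid Norris-lemma variant, but it differs from the paper's in one structural choice. You bound $\sup_{[0,\sigma]}|\tilde{y}|$ via a H\"{o}lder-interpolation argument (requiring a Kolmogorov/BDG moment bound on $\|\tilde{y}\|_{C^\alpha}$) and then feed this pointwise bound into the It\^{o} identity at $t=\sigma$. The paper instead \emph{integrates} the It\^{o} identity over $t\in[0,\sigma]$ to control $\int_0^\sigma\langle M\rangle_t\,dt$ (where $\langle M\rangle_t=\int_0^t|u|^2\,ds$), and then recovers $\langle M\rangle_\sigma$ from its time average by the elementary inequality $\langle M\rangle_\sigma\le\gamma^{-1}\int_0^\sigma\langle M\rangle_t\,dt+\gamma\epsilon^{-2r}$ (using monotonicity of $\langle M\rangle$ and the Lipschitz bound $|u|^2\le\epsilon^{-2r}$), optimised in $\gamma$. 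No estimate on $\sup|\tilde{y}|$ or any H\"{o}lder norm is needed.

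The paper's route has two concrete advantages relative to the lemma as literally stated. First, it uses only the hypothesis $\mathbb{E}\sup(|a|+|u|)^p\le\tilde{c}$ via a single Chebyshev step, delivering exactly the constant $\tilde{c}\epsilon^{rp}$ in the conclusion; your additional $C^\alpha$-event would replace this by $C(c_\sigma,p,\alpha)\tilde{c}\epsilon^{rp}$. Second, and more substantively, your Kolmogorov step needs $p$ large enough to reach exponents $\alpha$ close to $1/2$ (in particular $p>2$ merely to get any H\"{o}lder continuity), whereas the hypothesis grants only \emph{some} $p>0$; the paper's argument works uniformly in $p$. In the paper's applications all moments are available, so your route is adequate there and even yields a tighter constraint than $2q>8+20r+v$, but as a proof of the lemma exactly as stated this dependence on $p$ is a gap you should flag or remove.
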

The proof of Lemma $\ref{25}$ is postponed to  Appendix $\ref{A-2}$

\begin{lemma}
\label{29}  Let the  Hypothesis $\ref{2-18}$ hold and $C_0=2/\lambda(0)$, then for any  $p>0$,  there  exists a  constant $C=C(p,T,x,y,q)$
 such that
\begin{eqnarray*}
\mathbb{P}\Big\{\|v_{1}^{*}B_{.}+v_{2}^{* }D_{.}\|^{2}_{\frac{1}{4}}>\frac{1}{4^{\frac{5}{4}}C_0^{\frac{1}{4}}}\epsilon
^{-\frac{q^{3j_{0}+6}}{8}}\Big\}\leq C(p,T,x,y,q) \epsilon ^{p},~\forall \epsilon>0.
\end{eqnarray*}
\end{lemma}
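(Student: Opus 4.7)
The plan is to reduce the tail bound to a moment bound via Markov's inequality, and to establish that moment bound by a quantitative Kolmogorov--Chentsov argument applied to the scalar process $f(t):=|v_{1}^{*}B_{t}+v_{2}^{*}D_{t}|^{2}$ on the deterministic interval $[0,T]$. Since $[0,\tau]\subseteq[0,T]$, the random Hölder seminorm defined in $(\ref{3-2-1})$ is dominated by the pathwise Hölder-$1/4$ seminorm $\|f\|_{1/4,[0,T]}$, so $\tau$ plays no role from here on.

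The first step is to bound the increments of $B_t$ and $D_t$. Reading off the SDEs in $(\ref{2-5})$, the drifts of $B,D$ are linear combinations of $A,B,C,D$ with coefficients involving $\nabla a$ and $\nabla b$, while the diffusion coefficients are products of the form $B_s\nabla b_j$ or $D_s\nabla b_j$. Using the global bounds on $\nabla a,\nabla b$ supplied by Hypothesis $\ref{2-18}$ and the arbitrary-moment control on $\sup_{[0,T]}\|J_s^{-1}\|$ from Lemma $\ref{32}$, the Burkholder--Davis--Gundy and Hölder inequalities yield, for every integer $N\ge 1$,
\begin{eqnarray*}
\mathbb{E}|B_t-B_s|^{2N}+\mathbb{E}|D_t-D_s|^{2N}\le C(N,T,x,y)\,|t-s|^{N}.
\end{eqnarray*}
The polarization identity $|u|^{2}-|w|^{2}=\langle u-w,u+w\rangle$ applied with $u=v_{1}^{*}B_{t}+v_{2}^{*}D_{t}$ and $w=v_{1}^{*}B_{s}+v_{2}^{*}D_{s}$, together with Cauchy--Schwarz and the same moment control, then gives
\begin{eqnarray*}
\mathbb{E}|f(t)-f(s)|^{N}\le C(N,T,x,y)\,|t-s|^{N/2}.
\end{eqnarray*}

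In the second step I would invoke the Garsia--Rodemich--Rumsey inequality (a quantitative Kolmogorov continuity theorem) to upgrade this pointwise increment estimate into a moment bound on the Hölder seminorm itself: whenever $N>4$, so that $1/4<1/2-1/N$,
\begin{eqnarray*}
\mathbb{E}\|f\|_{1/4,[0,T]}^{N}\le \widetilde C(N,T,x,y)<\infty.
\end{eqnarray*}
Markov's inequality then closes the argument:
\begin{eqnarray*}
\mathbb{P}\Big\{\|v_{1}^{*}B_{\cdot}+v_{2}^{*}D_{\cdot}\|_{1/4}^{2}>\tfrac{1}{4^{5/4}C_0^{1/4}}\epsilon^{-q^{3j_{0}+6}/8}\Big\}\le (4^{5/4}C_0^{1/4})^{N}\,\widetilde C(N,T,x,y)\,\epsilon^{Nq^{3j_{0}+6}/8},
\end{eqnarray*}
and for any prescribed $p>0$ one chooses $N$ so large that $Nq^{3j_{0}+6}/8\ge p$.

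The only mildly delicate point is the moment estimate in the first step, where the diffusion coefficient of $B,D$ is a product $B_s\nabla b_j$ with $B_s$ itself unbounded; this is absorbed by Hölder's inequality together with Lemma $\ref{32}$, at the cost of a larger constant depending on $N,T,x,y$. The specific threshold $\tfrac{1}{4^{5/4}C_0^{1/4}}\,\epsilon^{-q^{3j_{0}+6}/8}$ is presumably calibrated for use in a chaining argument that follows this lemma, and its precise form is irrelevant to the proof structure above.
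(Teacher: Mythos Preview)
Your proposal is correct and follows essentially the same route as the paper: obtain an $L^{N}$ increment bound $\mathbb{E}|f(t)-f(s)|^{N}\le C|t-s|^{N/2}$ for the scalar process $f(t)=|v_{1}^{*}B_{t}+v_{2}^{*}D_{t}|^{2}$ on $[0,T]$, upgrade it to a finite moment for the $\tfrac14$-H\"older seminorm via a Kolmogorov-type continuity theorem, and conclude by Markov's inequality with $N$ chosen large. The only cosmetic difference is that the paper reaches the increment bound by applying It\^o's formula directly to $|v_{1}^{*}B_{s}+v_{2}^{*}D_{s}|^{2}$ and then BDG, whereas you bound the increments of $B,D$ first and use the polarization identity; the paper also cites the Kolmogorov criterion from Revuz--Yor rather than GRR, but the substance is identical.
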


\begin{proof}
From  $(\ref{2-5})$ and It\^{o}'s formula
\begin{align*}
 d|v_{1}^{*}B_{s}+v_{2}^{*}D_{s}|^{2}&= -2\langle (v_{1}^{*
}B_{s}+v_{2}^{*}D_{s}),(v_{1}^{*}B_{s}+v_{2}^{*}D_{s})
\nabla_y a_{2} \rangle ds
\\ &\ \ \  -2\langle (v_{1}^{*}B_{s}+v_{2}^{*}D_{s}),(v_{1}^{*
}A_{s}+v_{2}^{*}C_{s})\nabla _{y}a_{1}\rangle ds \\ \nonumber
&\ \ \  -2\sum_{j=1}^d\langle (v_{1}^{*}B_{s}+v_{2}^{*}D_{s}),(v_{1}^{*
}B_{s}+v_{2}^{*}D_{s})\nabla _{y}b_{j}\rangle dW_{j}(s)
\\&\ \ \  +2\sum_{j=1}^d\langle (v_{1}^{*}B_{s}+v_{2}^{*}D_{s}),(v_{1}^{*
}B_{s}+v_{2}^{*}D_{s})\nabla _{y}b_{j}\nabla _{y}b_{j}\rangle ds \\  \label{1-10-1}
&\ \ \  +\sum_{j=1}^d\langle (v_{1}^{*}B_{s}+v_{2}^{*}D_{s})\nabla
_{y}b_{j},(v_{1}^{*}B_{s}+v_{2}^{*}D_{s})\nabla _{y}b_{j}\rangle ds.
\end{align*}
From BDG inequality, Lemma $\ref{32}$ and and the above equation,
for any $p>0$, there exists a  constant $C=C(p,T,x,y)$ such that for
any $s,r \in [0,T],$
\begin{eqnarray*}
&&\mathbb{E} \left[ |v_{1}^{*}B_{s}+v_{2}^{*
}D_{s}|^{2}-|v_{1}^{*}B_{r}+v_{2}^{*}D_{r}|^{2}\right]
^{2p} \leq C|s-r|^{p}.
\end{eqnarray*}
Set   $\gamma =2p,~\epsilon =p-1,~T_0=T$  in Theorem 2.1, \cite{REVUZ},   for any $p>2,$
\begin{eqnarray*}
C_{p,T,x,y}:=\mathbb{E}\left[ \|v_{1}^{*}B_{.}+v_{2}^{*}D_{.}\|^{2}_{\frac{1}{4}}%
\right] ^{2p}<\infty .
\end{eqnarray*}
Thus, $\forall \epsilon>0,~\forall p'>0$
\begin{eqnarray}
\label{2-124}
\begin{split}
&\mathbb{P}\Big\{\|v_{1}^{*}B_{.}+v_{2}^{* }D_{.}\|^{2}_{\frac{1}{4}}>\frac{1}{4^{\frac{5}{4}}C_0^{\frac{1}{4}}}\epsilon
^{-\frac{q^{3j_{0}+6}}{8}}\Big\}
\\  & \leq C(p') \epsilon^{\frac{q^{3j_{0}+6}}{8}p'}\mathbb{E}\left[ \|v_{1}^{*}B_{.}+v_{2}^{*}D_{.}\|^{2}_{\frac{1}{4}}%
\right] ^{p'}.
\end{split}
\end{eqnarray}
Then this Lemma is   obtained by setting   $p'=\frac{8p}{q^{3j_0+6}}$  in $(\ref{2-124})$,
\end{proof}

\begin{lemma}
\label{19}
Let the  Hypothesis $\ref{2-18}$ hold,
then for  any $p>0$   there exists a constant
$C(p,T,x,y,q)$  such that
\begin{eqnarray*}
\mathbb{P}\Big\{F\cap \big\{\sup_{s\in \lbrack 0,\tau ]}|v_{1}^{*
}B_{s}+v_{2}^{*}D_{s}|^{2}>\epsilon ^{\frac{q^{3j_{0}+6}}{10}}\big\}\Big\}\leq
C(p,T,x,y,q)\epsilon ^{p},~\forall \epsilon >0.
\end{eqnarray*}

\end{lemma}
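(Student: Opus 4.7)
The plan is to apply the Hairer-type interpolation (Lemma 2.7) to $\phi(s) := |v_1^* B_s + v_2^* D_s|^2$ on $[0,\tau]$ with $\alpha = 1/4$, which will trade the sup norm of $\phi$ for its $L^1$ norm together with the $1/4$-H\"older seminorm. The event $F$ supplies the $L^1$ bound, Lemma 2.8 supplies the H\"older bound with overwhelming probability, and Lemma 2.5 absorbs the event that $\tau$ is very small.

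The first step is to convert $F$ into a bound on $\int_0^\tau \phi(s)\,ds$. Since $\tau \leq \tau'$, the definition of $\tau'$ forces $\lambda(s) \geq \lambda(0)/2 = 1/C_0$ throughout $[0,\tau]$, so that the smallest eigenvalue of $bb^*(x_s,y_s)$ is at least $1/C_0$. Consequently
\begin{equation*}
\sum_{j=1}^d |(v_1^* B_s + v_2^* D_s) b_j|^2 = (v_1^* B_s + v_2^* D_s)\, bb^*\, (v_1^* B_s + v_2^* D_s)^* \geq \frac{1}{C_0}\, \phi(s),
\end{equation*}
and, using $\tau \leq T$, on $F$ we obtain $\int_0^\tau \phi(s)\,ds \leq C_0\, \epsilon^{q^{3j_0+6}}$.

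Next I would fix $\delta := q^{3j_0+6}/2$ and work on the good event $G$ on which both $\tau \geq \epsilon^\delta$ and $\|v_1^* B_. + v_2^* D_.\|^2_{1/4} \leq H := \frac{1}{4^{5/4} C_0^{1/4}}\, \epsilon^{-q^{3j_0+6}/8}$. The complements have probability $O(\epsilon^p)$ by Lemma 2.5 and Lemma 2.8 respectively, with the free exponent chosen large. On $F\cap G$ I argue by contradiction: if $\phi(s_0) = M > \epsilon^{q^{3j_0+6}/10}$ at some $s_0 \in [0,\tau]$, the H\"older bound forces $\phi(s) \geq M/2$ on an interval of half-width $(M/(2H))^4$ about $s_0$, whence
\begin{equation*}
C_0\, \epsilon^{q^{3j_0+6}} \;\geq\; \int_0^\tau \phi(s)\,ds \;\geq\; \frac{M}{2}\cdot \min\Big\{\tau,\; (M/(2H))^4\Big\}.
\end{equation*}
If the minimum is realised by $(M/(2H))^4$, solving gives $M^5 \leq 32\, C_0\, H^4\, \epsilon^{q^{3j_0+6}}$, and substituting $H^4 = 4^{-5} C_0^{-1}\, \epsilon^{-q^{3j_0+6}/2}$ collapses this to $M \leq (1/32)^{1/5}\epsilon^{q^{3j_0+6}/10} = \tfrac{1}{2}\epsilon^{q^{3j_0+6}/10}$; if instead the minimum is $\tau$, then $M \leq 2 C_0\, \epsilon^{q^{3j_0+6}/2}$, which is much smaller than $\epsilon^{q^{3j_0+6}/10}$ for $\epsilon$ below a threshold depending on $(x,y,q)$. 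Either case contradicts $M > \epsilon^{q^{3j_0+6}/10}$.

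The main obstacle is not a deep idea but the careful bookkeeping of exponents: the interpolation exponent $\alpha/(1+\alpha) = 1/5$ at $\alpha=1/4$ must exactly balance the $L^1$ bound from $F$ against the H\"older blow-up in Lemma 2.8. This is in fact what dictated the peculiar choice of prefactor $1/(4^{5/4} C_0^{1/4})$ and exponent $-q^{3j_0+6}/8$ in Lemma 2.8 in the first place, so the arithmetic is rigged to produce precisely $q^{3j_0+6}/10$ with a prefactor $\leq 1/2$ in the dominant case.
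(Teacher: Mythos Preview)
Your argument is correct and follows essentially the same route as the paper: convert $F$ into the $L^1$ bound $\int_0^\tau \phi\,ds\le C_0\epsilon^{q^{3j_0+6}}$ via the ellipticity of $bb^*$ on $[0,\tau]$, then interpolate with the $1/4$-H\"older seminorm and a lower bound on $\tau$, invoking Lemma~2.5 and Lemma~2.8 for the two bad events. The only cosmetic differences are that the paper quotes the interpolation inequality (that is Lemma~2.1, not Lemma~2.7) directly rather than redoing it by hand, and it lands on the threshold $\tau<4C_0\epsilon^{\frac{9}{10}q^{3j_0+6}}$ instead of your $\tau<\epsilon^{q^{3j_0+6}/2}$, which is immaterial since Lemma~2.5 absorbs any positive power.
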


\begin{proof}
Due to $\tau\leq\tau'
$ and $\omega \in F,$  there exists a constant
$C_0=2/\lambda(0)=C(x,y)>0$ such that
\begin{eqnarray*}
\int_{0}^{\tau }|v_{1}^{*}B_{s}+v_{2}^{*}D_{s}|^{2}(\omega)ds\leq
C_0 \epsilon ^{q^{3j_{0}+6}}.
\end{eqnarray*}
 Set $%
f(s)=\int_{0}^{s}|v_{1}^{*}B_{u}+v_{2}^{*}D_{u}|^{2}du,~T_0=\tau(\omega)$  and   $\alpha =\frac{1}{4}$  in  Lemma $\ref{2-22}$, then
\begin{eqnarray*}
&&\sup_{s\in \lbrack 0,\tau ]}|v_{1}^{*}B_{s}+v_{2}^{*
}D_{s}|^{2}\leq \\
&&\begin{split}
&\max\Bigg\{\frac{4}{\tau}\int_{0}^{\tau }|v_{1}^{*}B_{u}+v_{2}^{*
}D_{u}|^{2}du, \
4 \Big\{\int_{0}^{\tau }|v_{1}^{*}B_{u}+v_{2}^{*
}D_{u}|^{2}du\Big\}^{\frac{1}{5}}  \Big( \|v_{1}^{*}B_{.}+v_{2}^{* }D_{.}\|^{2}_{\frac{1}{4}}\Big) ^{\frac{4}{5}}\Bigg\}.
\end{split}
\end{eqnarray*}
Thus
\begin{eqnarray} \label{22-1}
\nonumber &&\mathbb{P}\Big\{F\cap \big\{\sup_{s\in \lbrack 0,\tau ]}|v_{1}^{*}B_{s}+v_{2}^{*
}D_{s}|^{2}>\epsilon ^{\frac{q^{3j_{0}+6}}{10}}\big\}\Big\} \\
&& \leq \mathbb{P}\Big\{\|v_{1}^{*}B_{.}+v_{2}^{*}D_{.}\|^{2}_{\frac{1}{4}%
}>\frac{1}{4^{\frac{5}{4}}C_0^{\frac{1}{4}}}\epsilon ^{-\frac{q^{3j_{0}+6}}{8}}\Big\} +\mathbb{P}\left(\tau<4C_0 \epsilon^{\frac{9}{10}q^{3j_0+6}} \right).
\end{eqnarray}
Due to
 $(\ref{22-1})$,  Lemma $\ref{2-290}$   and Lemma $\ref{29}$,   for any $p>0,$  there exists a constant $C(p,T,x,y,q)$ such that
\begin{eqnarray*}
\mathbb{P}\Big\{F\cap \big\{\sup_{s\in \lbrack 0,\tau
]}|v_{1}^{*}B_{s}+v_{2}^{*}D_{s}|^{2}>\epsilon ^{\frac{%
q^{3j_{0}+6}}{10}}\big\}\Big\}\leq C(p,T,x,y,q)\epsilon ^{p}, ~\forall  \epsilon >0.
\end{eqnarray*}
\end{proof}

\begin{lemma}
\label{20}  Let the  Hypothesis $\ref{2-18}$ hold,   then for any
$p>0$,  there exists positive constant $C(p,T,x,y,q)$  such that
\begin{eqnarray*}
\mathbb{P}\bigg\{\sum_{j=1}^{d}\int_{0}^{T}|(v_{1}^{*
}B_{s}+v_{2}^{*}D_{s})b_{j}|^{2}ds &\leq &\epsilon
^{q^{3j_{0}+6}},\int_{0}^{\tau}|(v_{1}^{*}A_{s}+v_{2}^{*
}C_{s})\nabla _{y}a_{1}|^{2}ds>\epsilon ^{q^{3j_{0}}}\bigg\} \\
&\leq &C(p,T,x,y,q)\epsilon ^{p},~\forall  \epsilon >0.
\end{eqnarray*}
\end{lemma}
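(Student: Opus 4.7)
My plan is a Norris-type argument applying Lemma~\ref{23} componentwise to the $\mathbb{R}^n$-valued process $v_1^{*}B_t + v_2^{*}D_t$. First, I convert the $F$-bound into pathwise control: since $b(x_s,y_s)b^{*}(x_s,y_s) \succeq (\lambda(0)/2) I_n$ for all $s\in[0,\tau]$ (by the definitions of $\tau'$ and $\tau$), the event $F$ forces
\[
\int_0^\tau |v_1^{*}B_s + v_2^{*}D_s|^2 \, ds \leq C_0\,\epsilon^{q^{3j_0+6}}, \qquad C_0 = 2/\lambda(0).
\]
Combined with Lemma~\ref{19}, on a subset of $F$ of complementary probability $\leq C\epsilon^p$ one additionally has $\sup_{s\in[0,\tau]} |v_1^{*}B_s + v_2^{*}D_s|^2 \leq \epsilon^{q^{3j_0+6}/10}$.

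Second, I set up Lemma~\ref{23} componentwise. From~(\ref{2-5}), the $i$-th component satisfies
\[
d(v_1^{*}B_t+v_2^{*}D_t)_i = \sum_{j=1}^d u_{ij}(t)\,dW_j(t) + \Bigl\{-\bigl[(v_1^{*}A_t+v_2^{*}C_t)\nabla_y a_1\bigr]_i + R_i(t)\Bigr\}\,dt,
\]
where $u_{ij}(t) = -\bigl[(v_1^{*}B_t+v_2^{*}D_t)\nabla_y b_j\bigr]_i$ and $R_i(t)$ is linear in $v_1^{*}B_t+v_2^{*}D_t$ with coefficients depending on $\nabla_y a_2$ and $(\nabla_y b_j)^2$. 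Because $a_2$ is only $C_b^1$, $R_i$ is not itself It\^{o}-decomposable, so I replace $\tilde{y}_i(t) := (v_1^{*}B_t + v_2^{*}D_t)_i$ by $\hat{y}_i(t) := \tilde{y}_i(t) - \int_0^t R_i(s)\,ds$, whose drift reduces to $\tilde{a}_i(t) := -\bigl[(v_1^{*}A_t + v_2^{*}C_t)\nabla_y a_1\bigr]_i$. Since $\nabla_y a_1 \in C^2$ by Hypothesis~\ref{2-18} (recall $a_1\in C^{j_0+2}$ with $j_0\geq 1$), It\^{o}'s formula applied to $\nabla_y a_1(x_t,y_t)$ combined with the SDEs for $A_t,C_t$ yields a representation $\tilde{a}_i(t) = \tilde{a}_i(0) + \int_0^t\beta_i(s)\,ds + \int_0^t\gamma_i(s)\cdot dW(s)$ with $\beta_i,\gamma_i$ continuous and bounded in every $L^p$ on $[0,\tau]$ by Lemma~\ref{32} and Hypothesis~\ref{2-18}.

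Third, I invoke Lemma~\ref{23} with $\sigma=\tau$, which is admissible via Lemma~\ref{2-290}. On the Step~1 event, $\sup_{[0,\tau]}|R_i|^2 \leq C\,\epsilon^{q^{3j_0+6}/10}$, so
\[
\int_0^\tau \hat{y}_i(t)^2 \, dt \leq 2\int_0^\tau \tilde{y}_i(s)^2 \, ds + 2 T^3 \sup_{[0,\tau]} R_i^2 \leq C\,\epsilon^{q^{3j_0+6}/10}.
\]
Setting target $\tilde\epsilon := \epsilon^{q^{3j_0}}$ and picking the Lemma~\ref{23}-parameter $q_L$ in $(8, q^6/10)$ (non-empty since $q>8$) together with small $r,v$ satisfying $2q_L - 36r - 9v > 16$, one has $\tilde\epsilon^{q_L} = \epsilon^{q^{3j_0} q_L} \geq C\,\epsilon^{q^{3j_0+6}/10}$ for $\epsilon$ small. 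Lemma~\ref{23} then yields $\int_0^\tau |\tilde{a}_i|^2 \, dt \leq \tilde\epsilon = \epsilon^{q^{3j_0}}$ outside an event of probability $\leq C\epsilon^p$; summing over $i=1,\dots,n$ and using $\sum_i |\tilde{a}_i|^2 = |(v_1^{*}A + v_2^{*}C)\nabla_y a_1|^2$ closes the argument.

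The principal obstacle is the regularity gap: because Hypothesis~\ref{2-18} demands only $a_2\in C_b^1$, the error drift $R_i$ cannot itself be It\^{o}-expanded, and so Lemma~\ref{23} does not apply directly to $\tilde{y}_i$. The subtraction of $\int_0^{\cdot}R_i$ produces a clean process $\hat{y}_i$ whose drift is the $a_1$-dependent term $\tilde{a}_i$ alone, and the $L^2(dt)$-norms of $\hat{y}_i$ and $\tilde{y}_i$ remain comparable on the Step~1 event, keeping the inequality $\int_0^\tau \hat{y}_i^2\,dt \leq \tilde\epsilon^{q_L}$ within reach.
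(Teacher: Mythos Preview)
Your argument is correct and follows essentially the same strategy as the paper: you subtract from $v_1^{*}B_s+v_2^{*}D_s$ the time-integral of the drift terms involving $\nabla_y a_2$ and $(\nabla_y b_j)^2$ (which cannot themselves be It\^o-expanded since $a_2$ is only $C_b^1$), leaving a process whose drift is precisely $-(v_1^{*}A_s+v_2^{*}C_s)\nabla_y a_1$, and then invoke the Norris-type Lemma~\ref{23} with $\sigma=\tau$.

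The one difference is an unnecessary detour. You appeal to Lemma~\ref{19} to obtain the sup bound $\sup_{[0,\tau]}|v_1^{*}B_s+v_2^{*}D_s|^2\leq \epsilon^{q^{3j_0+6}/10}$ in order to control $\int_0^\tau\hat y_i^2$, which forces you to work with the degraded exponent $q^{3j_0+6}/10$ and to thread a parameter $q_L\in(8,q^6/10)$ through Lemma~\ref{23}. The paper avoids this entirely: since $R_i(s)$ is linear in $v_1^{*}B_s+v_2^{*}D_s$ with coefficients bounded by $\|\nabla_y a_2\|_\infty+\sum_j\|\nabla_y b_j\|_\infty^2$, Cauchy--Schwarz gives
\[
\Bigl|\int_0^s R_i(u)\,du\Bigr|^2\le s\int_0^s |R_i(u)|^2\,du\le C\,T\int_0^\tau |v_1^{*}B_u+v_2^{*}D_u|^2\,du,
\]
so that $\int_0^\tau|\hat y_i|^2\,ds\le C(T,x,y)\int_0^\tau|v_1^{*}B_s+v_2^{*}D_s|^2\,ds\le C(T,x,y)\,\epsilon^{q^{3j_0+6}}$ directly on~$F$, with the full exponent preserved. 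Lemma~\ref{23} then applies with exponents $q^{3j_0+6}$ versus $q^{3j_0}$, and Lemma~\ref{19} is not needed here at all.
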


\begin{proof}
From $(\ref{2-5})$,
\begin{eqnarray*}
\begin{split}
 &d(v_{1}^{*}B_{s}+v_{2}^{*}D_{s})
\\ &=-(v_{1}^{*}B_{s}+v_{2}^{*
}D_{s})\nabla_y a_2ds-(v_{1}^{*}A_{s}+v_{2}^{*
}C_{s})\nabla _{y}a_{1}ds \\
 & \ \ \ \ -\sum_{j=1}^d (v_{1}^{*}B_{s}+v_{2}^{*}D_{s})\nabla _{y}b_{j}
dW_{j}(t)+\sum_{j=1}^d (v_{1}^{*}B_{s}+v_{2}^{*}D_{s})\nabla _{y}b_{j}\nabla
_{y}b_{j}ds.
\end{split}
\end{eqnarray*}
From $\det(b(x,y)b^{*}(x,y)) \neq 0$ and the definition of
$\tau $,  if
\begin{eqnarray*}
\sum_{j=1}^{d}\int_{0}^{T}|(v_{1}^{*}B_{s}+v_{2}^{*
}D_{s})b_{j}|^{2}(\omega)ds\leq \epsilon ^{q^{3j_{0}+6}},
\end{eqnarray*}%
then there exists   a constant  $C=C(x,y)$  such that
\begin{eqnarray}\label{2-7}
\int_{0}^{\tau}|v_{1}^{* }B_{s}+v_{2}^{* }D_{s}|^{2}(\omega)ds\leq
C\epsilon ^{q^{3j_{0}+6}}.
\end{eqnarray}
Define
\begin{equation}
\label{2-110}
\begin{split}
\tilde{y}(s): &=(v_{1}^{* }B_{s}+v_{2}^{*
}D_{s})+\int_{0}^{s}(v_{1}^{* }B_{u}+v_{2}^{* }D_{u})%
\nabla_y a_2du   -\sum_{j=1}^{d}\int_{0}^{s}(v_{1}^{* }B_{u}+v_{2}^{* }D_{u})\nabla _{y}b_{j}\nabla
_{y}b_{j}du,
\end{split}
\end{equation}%
then
\begin{eqnarray*}
d\tilde{y}(s)=-(v_{1}^{* }A_{s}+v_{2}^{* }C_{s})\nabla
_{y}a_{1}ds-\sum_{j=1}^{d}(v_{1}^{* }B_{s}+v_{2}^{* }D_{s})\nabla _{y}b_{j}
dW_{j}(s).
\end{eqnarray*}%
Due to   H\"{o}lder inequality, $(\ref{2-7})$  and  $(\ref{2-110})$, there exists a constant $C(T,x,y)$ such that
\begin{eqnarray*}
\int_{0}^{\tau }|\tilde{y}(s)|^{2}ds &\leq &C(T,x,y)\int_{0}^{\tau }|v_{1}^{*
}B_{s}+v_{2}^{* }D_{s}|^{2}ds
\end{eqnarray*}%
This implies that
\begin{eqnarray*}
\Bigg\{ \sum_{j=1}^{d}\int_{0}^{T}|(v_{1}^{* }B_{s}+v_{2}^{*
}D_{s})b_{j}|^{2}ds\leq \epsilon ^{q^{3j_{0}+6}},~\int_{0}^{\tau
}|v_{1}^{* }A_{s}+v_{2}^{* }C_{s})\nabla _{y}a_{1}||^{2}ds>
\epsilon ^{q^{3j_{0}}}\Bigg\}\\
\subseteq
\begin{split}
&\Bigg\{\int_{0}^{\tau }|\tilde{y}(s)|^{2}ds\leq C(T,x,y)\epsilon ^{q^{3j_{0}+6}},\
\int_{0}^{\tau }|v_{1}^{* }A_{s}+v_{2}^{* }C_{s})\nabla
_{y}a_{1}|^{2}ds\geq \epsilon ^{q^{3j_{0}}}\Bigg\}.
\end{split}
\end{eqnarray*}
The probability of the  above event can be estimated  by   Lemma $\ref{23}$
and Lemma $\ref{2-290}$.
\end{proof}

\begin{lemma}
\label{21}Let the  Hypothesis $\ref{2-18}$ hold, then for any $p>0$,
there exists constants $C=C(p,T,x,y,q), ~\epsilon_0=\epsilon_0(q,x,y)$  such that for $j=1,\cdots,j_0-1,$
\begin{eqnarray*}
\mathbb{P}\{F\cap E_{j}\cap E_{j+1}^{c}\}\leq C(p,T,x,y,q)\epsilon ^{p}, ~\forall  \epsilon \leq \epsilon_0.
\end{eqnarray*}
\end{lemma}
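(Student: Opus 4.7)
The plan is to apply Lemma~\ref{23} to the scalar semimartingale $\tilde{y}(s) := (v_1^{*} A_s + v_2^{*} C_s) K(x_s, y_s)$ for each $K \in \mathcal{A}_j$. On $E_j$ one has $\int_0^\tau \tilde{y}(s)^2\, ds \le \epsilon^{q^{3j_0+3-3j}}$, so Lemma~\ref{23} will force the drift and diffusion of $\tilde{y}$ to be small off an event of small probability. Because the vectors appearing in the drift and diffusion of $\tilde{y}$ are exactly the generators of $\mathcal{A}_{j+1}$ produced from $K$, this smallness then delivers $E_{j+1}$ after summing over the finite set $\mathcal{A}_j$.

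First I would compute $d\tilde{y}(s)$ using It\^o's formula together with $(\ref{2-5})$ and $(\ref{1-1})$. Writing $\phi_s := v_1^{*} A_s + v_2^{*} C_s$ and $\psi_s := v_1^{*} B_s + v_2^{*} D_s$, the diffusion coefficient is
\begin{equation*}
u_j(s) = \phi_s\, \nabla_y K(x_s, y_s)\, b_j(x_s, y_s) - \psi_s\, \nabla_x b_j(x_s, y_s)\, K(x_s, y_s),
\end{equation*}
while the drift $a(s)$ contains the term $\phi_s(-\nabla_x a_1 \cdot K + \nabla_x K \cdot a_1)$ and the term $\phi_s\, \nabla_y K \cdot a_2$, both being $\phi_s$ paired with vectors of $\mathcal{A}_{j+1}$, together with an It\^o correction of the form $\tfrac{1}{2} \phi_s\, \mathrm{tr}(\nabla_y^2 K\, bb^*)$ and remainder terms linear in $\psi_s$. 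Hypothesis~\ref{2-18} guarantees that $K \in C^2$ for any $K \in \mathcal{A}_j$ with $j \le j_0 - 1$, so a second application of It\^o's formula expresses $a(s)$ itself as a semimartingale $a(t) = a(0) + \int_0^t \beta(s)\, ds + \int_0^t \gamma(s)\, dW(s)$ whose coefficients satisfy the $L^p$ bounds required by Lemma~\ref{23}, via Lemma~\ref{2-15} and Lemma~\ref{32}; the requisite tail bound on $\tau$ is Lemma~\ref{2-290}.

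Applying Lemma~\ref{23} with parameters chosen compatibly with the exponent $q^{3j_0+3-3j}$ yields, outside an event of probability at most $C\epsilon^p$, the bound $\int_0^\tau (|a(s)|^2 + |u(s)|^2)\, ds < \epsilon^\theta$ for a suitable $\theta$. Combining this with the bound $\sup_{s \le \tau}|\psi_s|^2 \le \epsilon^{q^{3j_0+6}/10}$ holding on $F$ outside a negligible set (Lemma~\ref{19}), the pointwise inequality $|u_j|^2 \ge \tfrac{1}{2}|\phi_s \nabla_y K\, b_j|^2 - |\psi_s \nabla_x b_j\, K|^2$, and the non-degeneracy $\inf_{s \le \tau} \lambda(s) \ge \lambda(0)/2$ valid up to $\tau$ by definition, I obtain smallness of $\int_0^\tau |\phi_s \nabla_{y_i} K(x_s, y_s)|^2\, ds$ for each $i = 1, \ldots, n$. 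These are precisely the first $n$ generators of $\mathcal{A}_{j+1}$ attached to $K$. The remaining generator $-\nabla_x a_1 \cdot K + \nabla_x K \cdot a_1$ is then controlled by using the smallness of $\int_0^\tau |a|^2\, ds$ and subtracting the already-controlled contribution $\phi_s \nabla_y K \cdot a_2$ together with the $\psi_s$-linear remainder. Summing over the finite collection $K \in \mathcal{A}_j$ produces $E_{j+1}$ with its target exponent $q^{3j_0-3j}$.

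The main obstacle is the bookkeeping of exponents. Lemma~\ref{23} loses a multiplicative factor in the exponent passing from $\tilde{y}$ to $(a,u)$, Lemma~\ref{19} supplies $\psi_s$-smallness only at level $q^{3j_0+6}/10$, and the non-degeneracy of $bb^*$ is uniform only up to $\tau$. The geometric schedule $q^{3j_0+3-3j}$, with ratio $q^3$ between successive steps, is engineered precisely so that the loss $q^{3j_0+3-3j} \to q^{3j_0-3j}$ at each step leaves enough slack to absorb all of these losses and close the induction step $E_j \to E_{j+1}$; the threshold $\epsilon_0 = \epsilon_0(q, x, y)$ arises from this matching of exponents together with the smallness condition of Lemma~\ref{23}.
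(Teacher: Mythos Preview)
Your overall strategy---apply a Norris-type lemma to $\tilde y(s)=\phi_s K(x_s,y_s)$ with $\phi_s:=v_1^*A_s+v_2^*C_s$, then peel off the $\psi_s:=v_1^*B_s+v_2^*D_s$ contributions using Lemma~\ref{19} and the nondegeneracy of $bb^*$---is the right one, and it is essentially what the paper does. But there is a genuine gap in how you isolate the Lie-bracket generator $-\nabla_x a_1\cdot K+\nabla_x K\cdot a_1$ from the drift $a(s)$. You correctly record that $a(s)$ contains the It\^o correction $\tfrac12\,\phi_s\sum_i\nabla_y(\nabla_yK\,b_i)b_i$, yet when you write that the bracket term is recovered ``by subtracting the already-controlled contribution $\phi_s\nabla_yK\cdot a_2$ together with the $\psi_s$-linear remainder'' you have silently dropped this correction. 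It is neither $\psi_s$-linear nor a combination of vectors in $\mathcal A_{j+1}$, and nothing in your argument bounds $\int_0^\tau\big|\phi_s\sum_i\nabla_y(\nabla_yKb_i)b_i\big|^2ds$. Without that bound the smallness of $\int_0^\tau|a(s)|^2ds$ only yields smallness of $\int_0^\tau\big|\phi_s(-\nabla_xa_1K+\nabla_xKa_1)+\tfrac12\phi_s\sum_i\nabla_y(\nabla_yKb_i)b_i\big|^2ds$, which does not give $E_{j+1}$.

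The paper closes exactly this gap, and it is the reason the schedule drops by a factor $q^3$ (through the intermediate exponents $q^{3j_0+3-3j},\,q^{3j_0+2-3j},\,q^{3j_0+1-3j},\,q^{3j_0-3j}$) rather than by a single factor of $q$. After the first step shows $\sum_i\int_0^\tau|\phi_s\nabla_yK\,b_i|^2ds$ is small, one introduces a \emph{second} dichotomy on the size of $\sum_i\int_0^\tau|\phi_s\nabla_y(\nabla_yKb_i)b_i|^2ds$. If it is small (the set $B_3$ in the paper) one absorbs it---together with $\phi_s\nabla_yK\,a_2$ and the $\psi_s$-terms---into a redefined $\tilde y$ whose drift is now \emph{exactly} $\phi_s(-\nabla_xa_1K+\nabla_xKa_1)$, and Lemma~\ref{23} applies cleanly. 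If it is large (the set $B_4$) one applies Lemma~\ref{25} to the process $\phi_s\nabla_yK\,b_i$, which is already known to be small and whose diffusion coefficient is $\phi_s\nabla_y(\nabla_yKb_i)b_k$ up to $\psi_s$-terms; this yields the required tail bound. In short, a single pass of Lemma~\ref{23} on $\phi_sK$ is not enough: you need a second pass of a Norris-type lemma on $\phi_s\nabla_yK\,b_i$ to dispose of the It\^o correction before the bracket generator can be isolated.
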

\begin{proof}
For any $K\in \mathcal{A}_{j}$,
by calculating,
\begin{eqnarray*}
\begin{split}
&d(v_{1}^{* }A_{s}+v_{2}^{* }C_{s})K(x_{s},y_{s}) \\
&=-\sum_{i=1}^{d}\langle (v_{1}^{* }B_{s}
 +v_{2}^{* }D_{s})\nabla
_{x}b_{i},\nabla _{y}K b_{i}\rangle ds+\sum_{i=1}^d(v_{1}^{* }B_{s}+v_{2}^{* }D_{s})\nabla
_{y}b_{i}\nabla _{x}b_{i}K(x_{s},y_{s})ds \\
&\ \ \  +\sum_{i=1}^{d}\Big((v_{1}^{* }A_{s}+v_{2}^{* }C_{s})\nabla
_{y}K(x_{s},y_{s})b_{i}-(v_{1}^{* }B_{s}+v_{2}^{* }D_{s})\nabla
_{x}b_{i}K(x_{s},y_{s})\Big) \cdot dW_{i}(s) \\
&\ \ \ +(v_{1}^{* }A_{s}+v_{2}^{* }C_{s})\nabla
_{y}K(x_{s},y_{s})a_{2}(x_{s},y_{s})ds-(v_{1}^{* }B_{s}+v_{2}^{*
}D_{s})\nabla _{x}a_{2}K(x_{s},y_{s})ds \\
&\ \ \ +(v_{1}^{* }A_{s}+v_{2}^{* }C_{s})\big(-\nabla
_{x}a_{1}(x_{s},y_{s})K(x_{s},y_{s})+\nabla
_{x}K(x_{s},y_{s})a_{1}(x_{s},y_{s})\big) ds \\
&\ \ \ +\frac{1}{2}(v_{1}^{* }A_{s}+v_{2}^{* }C_{s})\sum_{i=1}^{d}\big(
\nabla _{y}(\nabla _{y}K\cdot b_{i})b_{i}\big) ds,
\end{split}
\end{eqnarray*}
and
\begin{eqnarray*}
\begin{split}
&\mathbb{P}(F\cap E_{j}\cap E_{j+1}^{c})
\\ &=\mathbb{P}\left( F\cap \sum_{K\in \mathcal{A}%
_{j}}\int_{0}^{\tau }\big|(v_{1}^{* }A_{s}+v_{2}^{*
}C_{s})K(x_{s},y_{s})\big|^{2}ds\leq \epsilon ^{q^{3j_{0}+3-3j}},\right.  \\
&\ \ \ \ \ \ \ \ \ \ \ \ \ \ \ \left. \sum_{K\in \mathcal{A}_{j+1}}\int_{0}^{\tau }\big|(v_{1}^{*
}A_{s}+v_{2}^{* }C_{s})K(x_{s},y_{s})\big|^{2}ds>\epsilon
^{q^{3j_{0}-3j}}\right)  \\
&\leq \sum_{K\in \mathcal{A}_{j}}\mathbb{P}\Bigg(F\cap \int_{0}^{\tau
}\big|(v_{1}^{* }A_{s}+v_{2}^{* }C_{s})K(x_{s},y_{s})\big|^{2}ds\leq
\epsilon ^{q^{3j_{0}+3-3j}}, \\
&\ \ \ \ \ \ \ \ \ \ \ \ \ \  \int_{0}^{\tau }\big|(v_{1}^{* }A_{s}+v_{2}^{*
}C_{s}) \big( -\nabla_x a_{1} (x_{s},y_{s})K(x_{s},y_{s})+%
\nabla _{x}K\cdot a_{1}(x_{s},y_{s})\big) \big|^{2}ds
  \\
&\ \ \ \ \ \ \ \ \ \ \ \ \ \  \ \  \ \   \ \ \ \ \  \ \  \ \ \ \  \ \  \ \ \ \  \   \ \ \  +\int_{0}^{\tau }\big|(v_{1}^{* }A_{s}+v_{2}^{* }C_{s})\nabla
_{y}K(x_{s},y_{s})\big|^{2}ds\geq \frac{\epsilon
^{q^{3j_{0}-3j}}}{n(j)}\Bigg),
\end{split}
\end{eqnarray*}%
where $ n(j)$ denotes the cardinality of the set  $\mathcal{A}_j$.

It is not difficult to prove   there exists a  constant $\epsilon_0=\epsilon_0(q,x,y)$, such that when
$\epsilon<\epsilon_0$
\begin{align*}
 F&\cap \left\{\int_{0}^{\tau }|(v_{1}^{* }A_{s}+v_{2}^{*
}C_{s})K(x_{s},y_{s})|^{2}ds\leq \epsilon ^{q^{3j_{0}+3-3j}}\right\}\\
& \cap \left\{ \int_{0}^{\tau }\big|(v_{1}^{* }A_{s}+v_{2}^{* }C_{s})\nabla
_{y}K(x_{s},y_{s})\big|^{2}ds+\right.
 \\
&\ \ \ \  \int_{0}^{\tau }\big|(v_{1}^{* }A_{s}+v_{2}^{* }C_{s})\big(
-\nabla_xa_{1}(x_{s},y_{s})K(x_{s},y_{s})+\nabla _{x}K\cdot
a_{1}(x_{s},y_{s})\big) \big|^{2}ds
\\
&\ \ \ \ \ \ \ \left. \geq \frac{\epsilon ^{q^{3j_{0}-3j}}}{n(j)}\right\}\subseteq  B_{1}\cup B_{2},
\\ & \text{and} \ B_1 \subseteq B_1^{'},~ B_{2}\subseteq B_{3}\cup B_{4},~B_4 \subseteq \cup _{i=1}^{d}B_{4i}^{^{\prime }}, ~B_{4i}^{^{\prime }} \subseteq B_{4i}^{^{\prime \prime }},
\end{align*}%
here,
\begin{align*}
B_{1} &=F\cap \left\{ \int_{0}^{\tau }\big|(v_{1}^{* }A_{s}+v_{2}^{*
}C_{s})K(x_{s},y_{s})\big|^{2}ds\leq \epsilon ^{q^{3j_{0}+3-3j}},\right.  \\
&\ \ \ \ \ \ \ \ \ \ \ \ \ \  \left. \sum_{i=1}^{d}\int_{0}^{\tau }\big|(v_{1}^{* }A_{s}+v_{2}^{*
}C_{s})\nabla _{y}K(x_{s},y_{s})b_{i}\big|^{2}ds>\epsilon
^{q^{3j_{0}+2-3j}}\right\},
\\
B_{1}^{^{\prime }} &=F\cap \left\{\int_{0}^{\tau }\big|(v_{1}^{*
}A_{s}+v_{2}^{* }C_{s})K(x_{s},y_{s})\big|^{2}ds\leq \epsilon
^{q^{3j_{0}+3-3j}}\right\} \\
&\ \ \ \cap\left\{\sum_{i=1}^{d}\int_{0}^{\tau }\bigg(\bigg|(v_{1}^{* }A_{s}+v_{2}^{*
}C_{s})\nabla _{y}K(x_{s},y_{s})b_{i} \right. \\
&\ \ \ \ \ \ \ \ \  \ \ \  \ \ \ \ \ \ \  \ \left. -(v_{1}^{* }B_{s}+v_{2}^{* }D_{s})\nabla
_{x}b_{i}K(x_{s},y_{s})\bigg|^{2}\bigg)ds>\frac{1}{4}\epsilon ^{q^{3j_{0}+2-3j}}\right\},
\\
B_{2}&=F\cap \left\{ \int_{0}^{\tau }\big|(v_{1}^{* }A_{s}+v_{2}^{*
}C_{s})K(x_{s},y_{s})\big|^{2}ds\leq \epsilon ^{q^{3j_{0}+3-3j}}\right\}  \\
&\ \ \ \cap \left\{ \int_{0}^{\tau }\big|(v_{1}^{* }A_{s}+v_{2}^{* }C_{s})\left(
-\nabla _{x}a_{1}\cdot K+\nabla _{x}K\cdot a_{1}\right) \big|^{2}ds\geq \frac{%
\epsilon ^{q^{3j_{0}-3j}}}{{2n(j)}}\right\}  \\
&\ \ \ \cap \left\{ \sum_{i=1}^{d}\int_{0}^{\tau }\big|(v_{1}^{*
}A_{s}+v_{2}^{* }C_{s})\nabla _{y}K(x_{s},y_{s})b_{i}\big|^{2}ds\leq
\epsilon ^{q^{3j_{0}+2-3j}}\right\},
\\
B_{3}&=F\cap \left\{ \int_{0}^{\tau }\big|(v_{1}^{* }A_{s}+v_{2}^{*
}C_{s})K(x_{s},y_{s})\big|^{2}ds\leq \epsilon ^{q^{3j_{0}+3-3j}}\right\} \\
&\ \ \ \cap \left\{ \int_{0}^{\tau }\big|(v_{1}^{* }A_{s}+v_{2}^{* }C_{s})\left(
-\nabla _{x}a_{1}\cdot K+\nabla _{x}K\cdot a_{1}\right) \big|^{2}ds\geq \frac{%
\epsilon ^{q^{3j_{0}-3j}}}{{2n(j)}}\right\} \\
&\ \ \ \cap \left\{ \sum_{i=1}^{d}\int_{0}^{\tau }\big|(v_{1}^{*
}A_{s}+v_{2}^{* }C_{s})\nabla _{y}K(x_{s},y_{s})b_{i}\big|^{2}ds\leq
\epsilon ^{q^{3j_{0}+2-3j}}\right\}
\\
&\ \ \ \cap \left\{ \sum_{i=1}^{d}\int_{0}^{\tau }\big|(v_{1}^{*
}A_{s}+v_{2}^{* }C_{s})\nabla _{y}\left( \nabla
_{y}K(x_{s},y_{s})b_{i}\right) b_{i}\big|^{2}ds\leq \epsilon
^{q^{3j_{0}-3j+1}}\right\} ,
\\
B_{4}&=F\cap \left\{ \int_{0}^{\tau }\big|(v_{1}^{* }A_{s}+v_{2}^{*
}C_{s})K(x_{s},y_{s})\big|^{2}ds\leq \epsilon ^{q^{3j_{0}+3-3j}}\right\}
\\
&\ \ \ \cap \left\{ \int_{0}^{\tau }\big|(v_{1}^{* }A_{s}+v_{2}^{* }C_{s})\left(
-\nabla _{x}a_{1}\cdot K+\nabla _{x}K\cdot a_{1}\right) \big|^{2}ds\geq \frac{%
\epsilon ^{q^{3j_{0}-3j}}}{{2n(j)}}\right\} \\
&\ \ \ \cap \left\{ \sum_{i=1}^{d}\int_{0}^{\tau }\big|(v_{1}^{*
}A_{s}+v_{2}^{* }C_{s})\nabla _{y}K(x_{s},y_{s})b_{i}\big|^{2}ds\leq
\epsilon ^{q^{3j_{0}+2-3j}}\right\} \\
&\ \ \ \cap \left\{ \sum_{i=1}^{d}\int_{0}^{\tau }\big|(v_{1}^{*
}A_{s}+v_{2}^{* }C_{s})\nabla _{y}\left( \nabla
_{y}K(x_{s},y_{s})b_{i}\right) b_{i}\big|^{2}ds>\epsilon
^{q^{3j_{0}-3j+1}}\right\} ,
\\
B_{4i}^{^{\prime }} &=F\cap \left\{ \int_{0}^{\tau }\big|(v_{1}^{*
}A_{s}+v_{2}^{* }C_{s})\nabla _{y}K(x_{s},y_{s})b_{i}\big|^{2}ds\leq
\epsilon ^{q^{3j_{0}+2-3j}}\right\} \\
&\ \ \ \cap \left\{ \int_{0}^{\tau }\big|(v_{1}^{* }A_{s}+v_{2}^{*
}C_{s})\nabla _{y}\big( \nabla _{y}K(x_{s},y_{s})b_{i}\big) b_{i}\big|^{2}ds>%
\frac{1}{d}\epsilon ^{q^{3j_{0}-3j+1}}\right\},
\\
B_{4i}^{^{\prime \prime  }} &=F\cap \left\{ \int_{0}^{\tau }\big|(v_{1}^{*
}A_{s}+v_{2}^{* }C_{s})\nabla _{y}K(x_{s},y_{s})b_{i}\big|^{2}ds\leq
\epsilon ^{q^{3j_{0}+2-3j}}\right\} \\
&\ \ \ \cap \left\{ \sum_{k=1}^{d}\int_{0}^{\tau }\big|(v_{1}^{*
}A_{s}+v_{2}^{* }C_{s})\nabla _{y}\left( \nabla
_{y}K(x_{s},y_{s})b_{i}\right) b_{k}\big|^{2}ds>\frac{1}{d}\epsilon
^{q^{3j_{0}-3j+1}}\right\} .
\end{align*}
From  Lemma $\ref{25}$,
\begin{align*}
 \mathbb{P}(B_{1})\leq  \mathbb{P}(B_{1}^{^{\prime }} ) \leq C(p,T,x,y) \epsilon^p,~ \forall \epsilon>0.
\end{align*}
The  estimation of $\mathbb{P}(B_{4i}^{^{\prime \prime }})$  is similar to the estimation  of $\mathbb{P}(B_1)$.
For $\mathbb{P}(B_{3})$, define
\begin{align*}\label{2-80}
\tilde{y}(s):&=(v_{1}^{* }A_{s}+v_{2}^{* }C_{s})K(x_{s},y_{s})
 -\frac{1}{2}%
\sum_{i=1}^{d}\int_{0}^{s}(v_{1}^{* }A_{r}+v_{2}^{* }C_{r})\nabla_y \left(
\nabla _{y}K(x_{r},y_{r})b_{i}\right) b_{i}dr \\
&-\int_{0}^{s}(v_{1}^{* }A_{r}+v_{2}^{* }C_{r})\nabla
_{y}K(x_{r},y_{r})a_{2}(x_{r},y_{r})dr
+\int_{0}^{s}(v_{1}^{*
}B_{r}+v_{2}^{* }D_{r})\nabla _{x}a_{2}K(x_{r},y_{r})dr \\
&-\sum_{i=1}^{d}\int_{0}^{s}(v_{1}^{* }B_{r}+v_{2}^{* }D_{r})\nabla
_{y}b_{i}\nabla _{x}b_{i}K(x_{r},y_{r})dr +\sum_{i=1}^{d}\int_{0}^{s}\langle (v_{1}^{* }B_{r}+v_{2}^{*
}D_{r})\nabla _{x}b_{i},\nabla _{y}Kb_{i}\rangle dr,
\end{align*}%
then from  the equation  $(v_{1}^{* }A_{s}+v_{2}^{* }C_{s})K(x_s,y_s)$  satisfied, we know
\begin{align*}
d\tilde{y}(s) &=\sum_{i=1}^{d}\Big[(v_{1}^{* }A_{s}+v_{2}^{* }C_{s})\nabla
_{y}K(x_{s},y_{s})b_{i}+ (v_{1}^{*
}B_{s}+v_{2}^{* }D_{s})\nabla _{x}b_{i}K(x_{s},y_{s})\Big]dW_{i}(s)
 \\
&\ \  \ +(v_{1}^{* }A_{s}+v_{2}^{* }C_{s})\big( -\nabla
_{x}a_{1}K+\nabla
_{x}Ka_{1}\big)(x_s,y_s) ds.
\end{align*}%
If $\omega \in B_3$,  from the definitions  of $\tilde{y}(s)$ and $\tau$, there
exists a constant $C(p,T,x,y)$   such that
$$
\int_{0}^{\tau }\tilde{y}(s)^{2}ds\leq C(p,T,x,y)\epsilon ^{q^{3j_{0}+1-3j}}.
$$
Thus $B_{3}$ is a subset of
\begin{eqnarray}\label{2-10}
 \begin{split}
&\ \ \ \ \ \  \left\{ \int_{0}^{\tau }\tilde{y}(s)^{2}ds\leq  C(p,T,x,y)  \epsilon
^{q^{3j_{0}+1-3j}},\right. \\
& \ \  \ \ \ \ \  \left. \int_{0}^{\tau }\big|(v_{1}^{* }A_{s}+v_{2}^{*
}C_{s})\left( -\nabla _{x}a_{1}\cdot K+\nabla _{x}K\cdot a_{1}\right)
\big|^{2}ds\geq \frac{\epsilon ^{q^{3j_{0}-3j}}}{{2n(j)}}\right\}.
\end{split}
\end{eqnarray}%
The estimate   of the above set is from  Lemma $\ref{23}$ and Lemma $ \ref{2-290}$. This finishes
the proof of the Lemma 2.13.
\end{proof}

\begin{lemma}\label{2-1-6}
Let the  Hypothesis $\ref{2-18}$ hold, then there  exists a  constant $ \epsilon_0=\epsilon_0(q,x,y)$ such that
 \begin{eqnarray*}
 E\cap \{\tau \geq \epsilon ^{q }\}\cap \big\{\sup_{s\in
\lbrack 0,\tau ]}|v_{1}^{*}B_{s}+v_{2}^{*}D_{s}|\leq \epsilon ^{%
\frac{q^{3j_{0}+6}}{10}}\big\}=\emptyset, ~\forall \epsilon<\epsilon_0.
\end{eqnarray*}
\end{lemma}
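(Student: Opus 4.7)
The plan is to derive a contradiction from the assumption that the indicated intersection is nonempty. Let $\xi_s^*:=v_1^*A_s+v_2^*C_s$ and $\eta_s^*:=v_1^*B_s+v_2^*D_s$, so that $v^*J_s^{-1}=(\xi_s^*,\eta_s^*)$ and
\[
|v^*J_s^{-1}|^2 = |\xi_s|^2+|\eta_s|^2.
\]
The first step is to extract a uniform lower bound for $|\xi_s|$ on $[0,\tau]$. Since $|v|=1$ and $s\leq\tau\leq S$ forces $\|J_s^{-1}-I_{m+n}\|\leq R_2=\tfrac{1}{100}$, one has $|v^*J_s^{-1}|\geq 1-R_2=\tfrac{99}{100}$. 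On the event under consideration, $\sup_{s\in[0,\tau]}|\eta_s|\leq \epsilon^{q^{3j_0+6}/10}$, so choosing $\epsilon$ small enough (depending on $q$) yields $|\xi_s|^2\geq \tfrac{1}{2}$ for every $s\in[0,\tau]$.

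The second step is to exploit inequality $(\ref{2-2-3})$. Applied at the unit vector $\xi_s/|\xi_s|\in\mathbb{R}^m$ (with $s\leq S$), it gives
\[
c|\xi_s|^2 \leq \sum_{j=1}^{j_0}\sum_{V\in\mathcal{A}_j}|\xi_s^*V(x_s,y_s)|^2.
\]
Integrating on $[0,\tau]$ and using that the outcome lies in each $E_j$,
\[
c\int_0^\tau |\xi_s|^2\,ds \leq \sum_{j=1}^{j_0}\epsilon^{q^{3j_0+3-3j}}.
\]
Since $q>8$ and $\epsilon<1$, the smallest exponent on the right is $q^3$ (attained at $j=j_0$), so the right-hand side is bounded by $j_0\epsilon^{q^3}$ for $\epsilon$ sufficiently small.

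Combining the two steps gives
\[
\tfrac{\tau}{2}\leq \int_0^\tau|\xi_s|^2\,ds\leq \tfrac{j_0}{c}\,\epsilon^{q^3},
\]
so $\tau\leq (2j_0/c)\epsilon^{q^3}$. On the other hand we are also on $\{\tau\geq\epsilon^q\}$, whence
\[
\epsilon^q\leq \tfrac{2j_0}{c}\,\epsilon^{q^3},\qquad\text{i.e.}\qquad \tfrac{c}{2j_0}\leq \epsilon^{q^3-q}.
\]
Because $q>8>1$ we have $q^3-q>0$, so the last inequality fails once $\epsilon$ is smaller than some $\epsilon_0=\epsilon_0(q,x,y)$ (note $c=c(x,y)$ and $j_0=j_0(x,y)$). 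This gives the desired contradiction and proves the intersection is empty for all such $\epsilon$.

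The only subtle point is the first step: one must observe that the definitions of $J_s^{-1}$ and of the stopping time $S$ already encode a nontrivial pointwise lower bound on $|v^*J_s^{-1}|$, which then transfers to $|\xi_s|$ via the assumed smallness of $|\eta_s|$. The rest is a direct rescaling argument using the pointwise spanning condition $(\ref{2-2-3})$ and the scaling of the exponents in the definition of $E_j$; no further stochastic analysis is needed here, since the probabilistic estimates are already packaged in the earlier lemmas.
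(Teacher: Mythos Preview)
Your proof is correct and follows essentially the same approach as the paper: both use the spanning condition $(\ref{2-2-3})$ to bound $\sum_{j}\sum_{V\in\mathcal{A}_j}\int_0^\tau|\xi_s^*V|^2\,ds$ from below by $c\int_0^\tau|\xi_s|^2\,ds$, bound it from above by $\sum_j \epsilon^{q^{3j_0+3-3j}}$ via $E\subseteq E_j$, and derive a contradiction with $\tau\geq\epsilon^q$. The only cosmetic difference is that the paper obtains the lower bound on $|\xi_s|$ by first evaluating $\eta_0=v_2$ to get $|v_1|>9/10$ and then using $\|A_s-I_m\|,\|C_s\|\leq\tfrac{1}{100}$, whereas you use $|v^*J_s^{-1}|\geq 1-R_2$ directly and subtract $|\eta_s|^2$; both yield the same conclusion.
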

\begin{proof}
If $\omega  \in E\cap \{\tau \geq \epsilon ^{q }\}\cap \big\{\sup\limits_{s\in
\lbrack 0,\tau ]}|v_{1}^{*}B_{s}+v_{2}^{*}D_{s}|\leq \epsilon ^{%
\frac{q^{3j_{0}+6}}{10}}\big\}$, from $(\ref{2-2-3})$,  for some $c>0,$
\begin{eqnarray}\label{2-2-10}
\begin{split}
 &\sum_{j=1}^{j_{0}}\sum_{V\in \mathcal{A}_{j}}\int_{0}^{\tau
}\big|(v_{1}^{*}A_{s}+v_{2}^{*}C_{s})V(x_{s},y_{s})(\omega)\big|^{2}ds\\
&=\int_{0}^{\tau
}\sum_{j=1}^{j_{0}}\sum_{V\in \mathcal{A}_{j}}\Big(\frac{(v_{1}^{*}A_{s}+v_{2}^{*}C_{s})}{ \big|v_{1}^{*}A_{s}+v_{2}^{*
}C_{s}\big|  }V(x_{s},y_{s})(\omega)\Big)^2
 \cdot   \left|v_{1}^{*}A_{s}+v_{2}^{*
}C_{s}\right|^{2}ds
\\  & \geq c \int_{0}^{\tau }\big|v_{1}^{*}A_{s}+v_{2}^{*
}C_{s}\big|^{2}ds.
\end{split}
\end{eqnarray}
For $\omega \in
\big\{\sup_{s\in
\lbrack 0,\tau ]}|v_{1}^{*}B_{s}+v_{2}^{*}D_{s}|\leq \epsilon ^{%
\frac{q^{3j_{0}+6}}{10}}\big\}$,
let $s=0$,
\begin{eqnarray}\label{2-2-15}
|v_{2}|\leq \epsilon ^{\frac{q^{3j_{0}+6}}{10}}\leq \frac{1}{100},~|v_1|=\sqrt{1-|v_2|^2}>\frac{9}{10}.
\end{eqnarray}
Due to $(\ref{2-2-15})$ and   the fact  when  $s \leq \tau
$, $\|A_s-I_m\|\leq \frac{1}{100},~\|C_s\|\leq \frac{1}{100}$,
\begin{eqnarray} \label{2-2-20}
\int_{0}^{\tau }\big|v_{1}^{*}A_{s}+v_{2}^{*
}C_{s}\big|^{2}ds\geq \frac{1}{8}\tau \geq \frac{1}{8}\epsilon ^q.
\end{eqnarray}
From $(\ref{2-2-10})$ and $(\ref{2-2-20})$,
\begin{eqnarray} \label{2-2-21}
\sum_{j=1}^{j_{0}}\sum_{V\in \mathcal{A}_{j}}\int_{0}^{\tau
}\big|(v_{1}^{*}A_{s}+v_{2}^{*}C_{s})V(x_{s},y_{s})(\omega)\big|^{2}ds \geq  \frac{c}{8}\epsilon ^q.
\end{eqnarray}
In the following part, we will prove this is impossible when $\epsilon$ is small enough.
Set $\epsilon_0(q,x,y)$ such that when   $\epsilon<\epsilon_0(q,x,y),$
\begin{eqnarray*}
\epsilon ^{\frac{q^{3j_{0}+6}}{10}} \leq \frac{1}{100}, ~\sum_{j=1}^{j_{0}}\epsilon ^{q^{3j_{0}+3-3j}}<\frac{c}{8}\epsilon
^{q }.
\end{eqnarray*}
For  $\omega \in E \subseteq  E_{j}$,  then
\begin{eqnarray*}
\sum_{K(x,y)\in \mathcal{A}_{j}}\int_{0}^{\tau }\big|(v_{1}^{*
}A_{s}+v_{2}^{*}C_{s})K(x_{s},y_{s})(\omega)\big|^{2}ds\leq \epsilon
^{q^{3j_{0}+3-3j}},
\end{eqnarray*}
so  when   $\epsilon<\epsilon_0,$
\begin{eqnarray*}
\sum_{j=1}^{j_{0}}\sum_{V\in \mathcal{A}_{j}}\int_{0}^{\tau
}\big|(v_{1}^{* }A_{s}+v_{2}^{*}C_{s})V(x_{s},y_{s})(\omega)\big|^{2}ds\leq
\sum_{j=1}^{j_{0}}\epsilon ^{q^{3j_{0}+3-3j}}<\frac{c}{8}\epsilon
^{q },
\end{eqnarray*}
this contradict with  $(\ref{2-2-21})$.

Thus $E\cap \{\tau \geq
\epsilon ^{ }\}\cap \big\{\sup_{s\in \lbrack 0,\tau
]}|v_{1}^{*}B_{s}+v_{2}^{*}D_{s}|^{2}\leq \epsilon ^{\frac{%
q^{3j_{0}+6}}{10}}\big\}=\emptyset $  when  $\epsilon<\epsilon_0$.
\end{proof}

We are now in a position to give
\begin{proof}{\bf The proof of Theorem $\ref{15}$:}
Since
\begin{eqnarray}\label{2-3-2}
M_{T}=J_{T} \tilde{M}_{T}J_{T}^{*},
\end{eqnarray}
where
\begin{eqnarray} \label{2-3-1}
&&  \tilde{M}_{T}=\int_{0}^{T}J_{s}^{-1}\Big(
\begin{matrix}
 0 \\
 b(x_{s},y_{s})%
\end{matrix}%
\Big) \Big(
\begin{matrix}
0 \\
 b(x_{s},y_{s})%
\end{matrix}%
\Big) ^{*}(J_{s}^{-1})^{*}ds,
\end{eqnarray}
 we only need  to prove the $L^{p}$ integrability of
$\det(\tilde{M}_T^{-1})$. For this  purpose, we need to prove for any
$p>0$, there exists constant $C(p)$, such that
\begin{eqnarray}\label{2-106}
\sup_{|v|=1}\mathbb{P}\{v^{*}\tilde{M}_{T}v\leq \epsilon \}\leq C(p)\epsilon
^{p},\ \ \ \forall \epsilon >0.
\end{eqnarray}
It is easy to check that  $(\ref{2-106})$ is
equivalent to for any $p>0,v \in \mathbb{R}^{m+n},|v|=1$, there exists positive  constants
$\epsilon_0(p),~C(p) $ such that
\begin{eqnarray}\label{2-301}
\mathbb{P}\{v^{*}\tilde{M}_{T}v\leq \epsilon \}\leq C(p)\epsilon
^{p},\ \ \ \forall \epsilon \leq \epsilon_0(p).
\end{eqnarray}
For  $~v=(v_{1}^*,v_{2}^*)^{*} \in \mathbb{R}^{m}\times \mathbb{R}^{n}$,  $J^{-1}(s)=\left(
\begin{array}{cc}
A_{s} & B_{s} \\
C_{s} & D_{s}%
\end{array}%
\right)$  and due to    $(\ref{2-3-1})$,
\begin{eqnarray*}
v^{*}\tilde{M}_{T}v=\sum_{j=1}^{d}\int_{0}^{T}|(v_{1}^{*
}B_{s}+v_{2}^{*}D_{s})b_{j}|^{2}ds.
\end{eqnarray*}
Here, we recall the definitions  of $E,F,E_j,\tau$ which are given  in the beginnings  of this subsection.
Then $(\ref{2-301})$  is equivalent to for any $p>0$ and $v \in \mathbb{R}^{m+n},|v|=1$, there exists constants  $C(p)$ and $\epsilon_0(p)$ such that
$$
\mathbb{P}(F) \leq C(p) \epsilon^p,~ \forall \epsilon \leq \epsilon_0(p).
$$
For
\begin{eqnarray*}
F\subseteq (F\cap E_{1}^{c})\cup (F\cap E_{1}\cap E_{2}^{c})\cup (F\cap
E_{2}\cap E_{3}^{c})\cup \cdots \cup (F\cap E_{j_{0}-1}\cap
E_{j_{0}}^{c})\cup E,
\end{eqnarray*}%
so
\begin{eqnarray}\label{2-402}
\mathbb{P}(F)\leq \mathbb{P}(E)+\sum_{j=1}^{j=j_{0}-1}\mathbb{P}({F\cap E_{j}\cap E_{j+1}^{c}})+\mathbb{P}(F\cap
E_{1}^{c}).
\end{eqnarray}
From Lemma $\ref{20}$ and Lemma $\ref{21}$, for any $p>0$ and $v \in \mathbb{R}^{m+n},|v|=1$,
there exists positive  constants $C(p,T,x,y,q), \epsilon_0(q,x,y)$ such that for any $\epsilon\leq \epsilon_0(q,x,y)$,
\begin{eqnarray}\label{2-401}
\sum_{j=1}^{j=j_{0}-1}\mathbb{P}({F\cap E_{j}\cap E_{j+1}^{c}})+\mathbb{P}(F\cap
E_{1}^{c})\leq C(p,T,x,y,q)\epsilon ^{p}.
\end{eqnarray}%
For estimating $\mathbb{P}(E)$,  we note that
\begin{equation}\label{2-303}
\begin{split}
 \mathbb{P}(E) &\leq \mathbb{P}\big(E\cap \{\tau \geq \epsilon ^{q }\}\big)+\mathbb{P}(\tau <\epsilon
^{q }) \\
&\leq  \mathbb{P}\Big(F\cap \{\tau \geq \epsilon ^{q}\}\cap \{\sup_{s\in \lbrack 0,\tau
]}|v_{1}^{*}B_{s}+v_{2}^{*}D_{s}|>\epsilon ^{\frac{q^{3j_{0}+6}}{%
10}}\}\Big)
\\   & \ \  +\mathbb{P}\Big(E\cap \{\tau \geq \epsilon ^{q }\}\cap \big\{\sup_{s\in \lbrack
0,\tau ]}|v_{1}^{*}B_{s}+v_{2}^{*}D_{s}|\leq \epsilon ^{\frac{%
q^{3j_{0}+6}}{10}}\big\}\Big)
\\ &  \ \  + \mathbb{P}\left(\tau <\epsilon ^{q }\right).
\end{split}
\end{equation}
So, due to  Lemma $\ref{19}$, Lemma $\ref{2-1-6}$ and  Lemma  $ \ref{2-290}$,
there  exists constants  $C(p,T,x,y,q)$ and $\epsilon_0=\epsilon_0(q,x,y) $ such that
\begin{eqnarray}\label{2-403}
\mathbb{P}(E) \leq C(p,T,x,y,q) \epsilon^p,~ \forall \epsilon \leq \epsilon_0.
\end{eqnarray}

So from $(\ref{2-402})(\ref{2-401})(\ref{2-403})$, we know for any $p>0$ and $v \in \mathbb{R}^{m+n},|v|=1$, there
exists constants $C(p,T,x,y,q)$  and  $\epsilon_0(q,x,y) $  such that
$$
\mathbb{P}(F)\leq C(p,T,x,y,q) \epsilon^p, ~\forall  \epsilon \leq \epsilon_0(q,x,y).
$$
 Since $T,x,y,q$  are all fixed, this theorem has been proved.
\end{proof}

\section{Gradient Estimate}
In this section, we give  a gradient estimate. The Hypothesis and  main Theorem in this section  is
\begin{hypothesis}
\label{100-1} There exists $j_0\in \mathbb{N}$ and $R>0$ such that
\begin{description}
  \item[(\romannumeral1)]  $a_1 \in C_b^2(\mathbb{R}^m\times \mathbb{R}^n;\mathbb{R}^m)
 \cap C^{j_0+2}(\mathbb{R}^m\times \mathbb{R}^n;\mathbb{R}^m),~a_2\in C_b^2(\mathbb{R}^m
 \times \mathbb{R}^n;\mathbb{R}^n);$
  \item[(\romannumeral2)] $b\in C_b^2(\mathbb{R}^m\times \mathbb{R}^n;\mathbb{R}^n \times \mathbb{R}^d)$,  $\det(b(x,y)\cdot b^{*}(x,y))\neq 0$, $\forall (x,y) \in \mathbb{R}^m\times \mathbb{R}^n$ with $|(x,y)|\leq R;$
  \item[(\romannumeral3)]   $\forall (x,y) \in \mathbb{R}^m\times \mathbb{R}^n,  |(x,y)|\leq R,$
the vector space spanned by  $\cup_{k=1}^{j_0} \mathcal{A}_{k}$ at point $(x,y)$ has dimension $m$.
\end{description}
\end{hypothesis}
\begin{thm}
$\label{2-39}$ Let the  Hypothesis $\ref{100-1}$ hold, then for any $t>0$,  then there
exists a constant  $C=C\left(R,t\right)$ such that for any  $f \in
C_b^{1}(\mathbb{R}^m \times \mathbb{R}^n,\mathbb{R})$, $(x,y)\in \mathbb{R}^m \times  \mathbb{R}^n$ with  $ |(x,y)|\leq R$,
\begin{align*}
|  \nabla P_{t}f\left( x,y\right) |  \leq
C\left( R, t\right) \| f\|_{\infty }.
\end{align*}
\end{thm}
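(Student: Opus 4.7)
The plan is to prove the gradient estimate by Malliavin integration by parts, using Theorem $\ref{15}$ as the engine, but upgraded to a locally uniform statement. Since Hypothesis $\ref{100-1}$ is only a local condition (it constrains $b$ and the H\"{o}rmander-type span only on $\overline{B}(0,R)$), the first task is to extend the pointwise $L^p$ bound on $\det(M_T^{-1})$ to a bound that is uniform in the initial datum $(x,y) \in \overline{B}(0,R)$. Concretely, I would revisit the proof of Theorem $\ref{15}$ and track how each constant depends on $(x,y)$: the radii $R_1, R_3$ and the ellipticity constant $c$ in $(\ref{2-2-1})$, the stopping time moment bound in Lemma $\ref{2-290}$, and the constant in the bound on $\|v_1^{*}B_{\cdot}+v_2^{*}D_{\cdot}\|_{1/4}^{2}$ in Lemma $\ref{29}$. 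Using that $b$, $\nabla a$, $\nabla b$ are bounded and $\det(bb^{*})$ is bounded below on $\overline{B}(0,R)$, one checks that these constants can all be chosen uniformly for $(x,y) \in \overline{B}(0,R)$, yielding $\sup_{|(x,y)|\leq R}\mathbb{E}_{x,y}\|M_t^{-1}\|^{p} \leq C(R,t,p)$ for every $p,t>0$.

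Next I would carry out the standard IBP procedure. By differentiating the SDE with respect to the initial datum,
\begin{equation*}
\nabla P_{t}f(x,y) = \mathbb{E}_{x,y}\bigl[\nabla f(x_t,y_t)\cdot J_t\bigr],
\end{equation*}
so the derivative of $f$ has been transferred onto the Jacobian flow. To remove the $\nabla f$ and replace it with $\|f\|_{\infty}$, I apply the Malliavin integration-by-parts formula: writing $D$ for the Malliavin derivative and noticing that $D(x_t,y_t) = J_t\, J_s^{-1}(0,b(x_s,y_s))^{*}$, one has the identity
\begin{equation*}
\nabla_{k} P_{t}f(x,y) = \mathbb{E}_{x,y}\bigl[f(x_t,y_t)\,H_{k}(x,y)\bigr], \qquad k=1,\dots,m+n,
\end{equation*}
where $H_{k}$ is a Skorohod integral whose integrand is a product of entries of $J_t$, of $M_t^{-1}$, of $D(x_t,y_t)$, and of $b(x_s,y_s)$; see for instance the scheme in Proposition 2.1.4 of \cite{Nualart}. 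Applying H\"{o}lder's inequality, the bound $|\nabla P_{t}f(x,y)| \leq \|f\|_{\infty}\sum_{k}\mathbb{E}_{x,y}|H_{k}|$ reduces everything to $L^{p}$ estimates of the ingredients of $H_{k}$.

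Finally, I would collect the required $L^p$ estimates uniformly in $(x,y) \in \overline{B}(0,R)$: the moment bound for $J_t$ and $J_t^{-1}$ (Lemma $\ref{32}$, whose constants depend only on $\|\nabla a\|_\infty, \|\nabla b\|_\infty, T$), the moment bound for the second Malliavin derivatives (entirely analogous since $a,b \in C_b^{2}$), and the locally uniform bound on $\|M_t^{-1}\|^{p}$ obtained in the first step. Combining these via H\"{o}lder's inequality, $\sup_{|(x,y)|\leq R}\mathbb{E}_{x,y}|H_k| \leq C(R,t)$, which yields the desired estimate with $C(R,t)$ independent of $f$.

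The main obstacle is the first step: Theorem $\ref{15}$ gives only a pointwise bound on $\mathbb{E}_{x,y}\det(M_T^{-1})^{p}$, and its proof is long and involves many $(x,y)$-dependent quantities (the radii $R_1,R_3$, the lower ellipticity constant $c$, and the constants controlling the Lyapunov-type estimates). The work consists in verifying that each of these constants is controlled by data depending only on $R$ (via the supremum of $b,\nabla a, \nabla b$ and the infimum of $\lambda(0)$ over $\overline{B}(0,R)$), and that the stopping-time estimates (Lemma $\ref{22}$, Lemma $\ref{2-290}$) remain uniform. Once this uniformization is in hand, the IBP argument is essentially routine.
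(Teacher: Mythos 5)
Your proposal follows essentially the same route as the paper: the paper's Lemma \ref{100-16} is exactly your uniformization step (rechoosing $R_1$, $c$, $R_3=1/C_0$ via continuity and compactness on $\overline{B}(0,R)$ so that every constant in the proof of Theorem \ref{15} depends only on $R$), and the paper's proof of Theorem \ref{2-39} is your integration-by-parts step, combining the locally uniform moment bounds on $(x_t,y_t)$, $J_t$, $J_t^{-1}$, the Malliavin derivatives, and $\det(M_t^{-1})$ via the Skorohod-integral representation of $\nabla_i P_t f$. The argument is correct and the identified obstacles and their resolutions coincide with the paper's.
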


In order to prove this Theorem, we need  the following Lemmas.
These Lemmas give  some estimations  of   $J_t$, $J_t^{-1}$, $(x_t,y_t)$ and $M_t$. Especially,  we give a uniform estimation of  $M_t$ in Lemma  $\ref{100-16}$.
In the end of this section, we give the proof of Theorem $\ref{2-39}$. The  method to prove Theorem $\ref{2-39}$  is standard.

Before all of  this, we introduce  some notations. Let $D(x_t,y_t)$ denote the Malliavin derivative of $(x_t,y_t)$ and $H=L^2([0,\infty),ds)$.
$\delta$ denotes  the divergence operator.

\begin{lemma} \label{100-14}
Let the   Hypothesis $\ref{100-1}$ hold, then for any $T,p>0$,
\begin{eqnarray}
\label{100-10}&&\sup_{|(x,y)|\leq R}\mathbb{E}_{x,y}\Big\{ \sup_{t \in [0,T]}|(x_t,y_t)|^p \Big\}<\infty,
\\ \label{100-11}&&
\sup_{|(x,y)|\leq R}\mathbb{E}_{x,y}\Big\{ \sup_{t \in [0,T]}\|J_t\|^p\Big\} <\infty,
\\  \label{100-12}&& \sup_{|(x,y)|\leq R}\mathbb{E}_{x,y}\Big\{ \sup_{t \in [0,T]}
\|J_t^{-1}\|^p \Big\}<\infty,
\\  \label{100-20}&& \sup_{|(x,y)|\leq R}\mathbb{E}_{x,y} \|M_T\|^p <\infty,
\end{eqnarray}
\end{lemma}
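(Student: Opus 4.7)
The plan is to derive all four estimates by standard stochastic-calculus techniques (Burkholder-Davis-Gundy, H\"older, Gronwall), with emphasis on the fact that the constants can be chosen to depend only on $T$, $p$, $R$, and the global bounds on $\nabla a$ and $\nabla b$ furnished by Hypothesis \ref{100-1}. The novel point compared with Lemmas \ref{2-15} and \ref{32} is not the existence of the moments but the \emph{uniformity} over the initial datum on $\overline{B}(0,R)$.

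First I would establish \eqref{100-10}. Taking $|\cdot|^p$ of the SDE \eqref{1-1}, then applying H\"older's and Burkholder-Davis-Gundy's inequalities, one obtains
\[
\mathbb{E}\sup_{s\leq t}|(x_s,y_s)|^p \leq C(T,p)\Big(1+|(x,y)|^p + \int_0^t \mathbb{E}\sup_{u\leq s}|(x_u,y_u)|^p\,ds\Big),
\]
where the at-most-linear growth of $a_1,a_2,b$ (a consequence of $\nabla a,\nabla b$ being bounded) is used to control $|a_i(x_s,y_s)|^p$ and $|b(x_s,y_s)|^p$ by $C(1+|(x_s,y_s)|^p)$. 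Gronwall's lemma then yields $\mathbb{E}\sup_{s\leq T}|(x_s,y_s)|^p \leq C(T,p)(1+|(x,y)|^p)$, which on $\overline{B}(0,R)$ is at most $C(T,p)(1+R^p)$.

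Next I would treat \eqref{100-11} and \eqref{100-12} simultaneously. The decisive observation is that $J_t$ and $J_t^{-1}$ solve the \emph{linear} matrix SDEs \eqref{2-1-1000} and \eqref{17}, whose coefficient matrices are built from $\nabla a$, $\nabla b$ and quadratic combinations of $\nabla b$ evaluated along $(x_s,y_s)$; under Hypothesis \ref{100-1} all of these are globally bounded. For a linear matrix SDE with uniformly bounded coefficient processes and deterministic initial condition $I_{m+n}$, a direct BDG+Gronwall argument produces
\[
\mathbb{E}\sup_{t\leq T}\|J_t\|^p + \mathbb{E}\sup_{t\leq T}\|J_t^{-1}\|^p \leq C(T,p),
\]
with $C(T,p)$ \emph{independent} of $(x,y)$, so the estimates are uniform over $\overline{B}(0,R)$ for free.

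Finally, for \eqref{100-20} I would exploit the factorization \eqref{2-2} to write
\[
\|M_T\| \leq \|J_T\|^2 \int_0^T \|J_s^{-1}\|^2 |b(x_s,y_s)|^2\,ds.
\]
Two successive applications of Cauchy-Schwarz (one separating $\|J_T\|^2$ from the integral, one separating $\|J_s^{-1}\|$ from $|b(x_s,y_s)|$) together with H\"older's inequality in $s$, the linear growth of $b$, and the estimates \eqref{100-10}--\eqref{100-12}, yield $\sup_{|(x,y)|\leq R}\mathbb{E}_{x,y}\|M_T\|^p < \infty$.

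There is no real obstacle here; the content is routine SDE moment estimation. The only subtlety is bookkeeping of the $(x,y)$-dependence through the Gronwall constants: the moment of $(x_t,y_t)$ picks up a factor $|(x,y)|^p$, but those of $J_t,J_t^{-1}$ do not, as their SDEs start at $I_{m+n}$ with coefficients bounded by $\|\nabla a\|_\infty$ and $\|\nabla b\|_\infty$ alone. Confining $(x,y)$ to $\overline{B}(0,R)$ then absorbs every $|(x,y)|^p$ into a constant depending only on $R$, delivering the required uniform bounds.
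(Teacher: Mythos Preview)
Your proposal is correct and, for \eqref{100-11}, \eqref{100-12}, and \eqref{100-20}, is exactly what the paper does: BDG plus Gronwall on the linear matrix SDEs \eqref{2-1-1000} and \eqref{17} with globally bounded coefficients, followed by combining these bounds with \eqref{2-2} for $M_T$.

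For \eqref{100-10} the paper argues slightly differently: instead of running BDG+Gronwall to obtain the explicit bound $C(T,p)(1+|(x,y)|^p)$, it first invokes Lemma~\ref{2-15} to get finiteness of $h(x,y):=\mathbb{E}_{x,y}\sup_{t\le T}|(x_t,y_t)|^p$ for each fixed $(x,y)$, then asserts that $h$ is continuous in $(x,y)$ and concludes by compactness of $\overline{B}(0,R)$. Your direct quantitative estimate is arguably cleaner, since it avoids the auxiliary continuity claim and produces an explicit dependence on $R$; the paper's route is shorter to state but leans on continuity of the solution map in $L^p$, which itself would be proved by essentially the same BDG+Gronwall computation you wrote down.
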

\begin{proof}
For any$(x,y)$  fixed ,  $\mathbb{E}_{x,y}\big\{\sup_{s \in [0,T]}|(x_s,y_s)|^p \big\} <\infty$,  and  $h(x,y):=\mathbb{E}_{x,y}\big\{ \sup_{t \in [0,T]}|(x_t,y_t)|^p \big\}$ is continuous w.r.t $(x,y)$, so $(\ref{100-10})$ holds.

For any  $p>2$, set $f(t)=\mathbb{E}_{x,y}\big\{ \sup_{s \in [0,t]}\|J_s\|^p\big\}$, due to $(\ref{2-1-1000}),$ there exists constants  $C(p),C(p,T)$ such that
\begin{eqnarray*}
f(t)\leq C(p)+C(p,T)\int_0^tf(s)ds,~\forall t\in [0,T].
\end{eqnarray*}
Then, $(\ref{100-11})$ comes from Gronwall inequality and the proof of $(\ref{100-12})$ is similar.

$(\ref{100-20})$ follows by  $(\ref{100-10})$, $(\ref{100-11})$, $(\ref{100-12})$ and  $(\ref{2-2})$.
\end{proof}

\begin{lemma}\label{3-25}
Let the   Hypothesis $\ref{100-1}$ hold, then for any $T,p>0$,
\label{100-15}
\begin{eqnarray}
\label{3-21}&&\sup_{|(x,y)|\leq R}\sup_{r\in[0,T]}\mathbb{E}_{x,y}\Big\{ \sup_{ t \in [r,T]}\|D_r(x_t,y_t)\|^p \Big\}<\infty,\\
\label{3-22}&&\sup_{|(x,y)|\leq R}\sup_{r \in [0,T]}\mathbb{E}_{x,y}\Big\{\sup_{t\in [r,T]}\big\|D_r J_t^{-1}\big\|^p\Big\}<\infty,\\
\label{3-23}
&&\sup_{|(x,y)|\leq R}\sup_{r \in [0,T]}\mathbb{E}_{x,y}\Big\{\sup_{t\in [r,T]}\big\|D_r J_t\big\|^p\Big\}<\infty,\\
\label{3-24}&& \sup_{|(x,y)|\leq R}\sup_{r_1,r_2\in [0,T]}\mathbb{E}_{x,y}\Big\{\sup_{r_1 \vee r_2\leq t\leq T}\|D_{r_1,r_2}X(t)\|^p\Big\}<\infty.
\end{eqnarray}
\end{lemma}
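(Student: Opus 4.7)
The plan is to apply the Malliavin derivative to the SDEs $(\ref{1-1})$, $(\ref{17})$, and $(\ref{2-1-1000})$, obtain linear SDEs for each of the quantities of interest, and then close the $L^p$ bounds by Gronwall's inequality, the Burkholder--Davis--Gundy inequality, and the uniform moment bounds from Lemma $\ref{100-14}$. Hypothesis $\ref{100-1}$ gives $a_1,a_2\in C_b^2$ and $b\in C_b^2$, so in every linear SDE that arises the coefficients (first and second derivatives of $a$ and $b$) are bounded, and the Gronwall constants will depend only on $R$, $T$, $p$, not on the base point $(x,y)$ with $|(x,y)|\leq R$.

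For $(\ref{3-21})$, I would use the representation $D_r(x_t,y_t)=J_t J_r^{-1}(0,b(x_r,y_r))^{*}$ for $t\geq r$, obtained by differentiating $(\ref{1-1})$ in the Malliavin sense. Since $b$ has linear growth, H\"older's inequality combined with $(\ref{100-10})$, $(\ref{100-11})$, and $(\ref{100-12})$ yields the uniform bound. For $(\ref{3-22})$ and $(\ref{3-23})$, differentiating $(\ref{17})$ and $(\ref{2-1-1000})$ with respect to the Wiener path yields linear SDEs of the schematic form
\begin{equation*}
Y_t=\int_r^t Y_s B_s\,ds+\sum_j\int_r^t Y_s A_s^{j}\,dW_j(s)+\int_r^t F_s\,ds+\sum_j\int_r^t G_s^{j}\,dW_j(s),
\end{equation*}
with bounded $A_s^j, B_s$ and source terms $F_s, G_s^j$ that are products of $J_s^{\pm 1}$ and $D_r(x_s,y_s)$ against bounded second derivatives of $a$ and $b$. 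Taking $L^p$ norms, using H\"older on the source (splitting the product of $J_s^{\pm1}$ and $D_r(x_s,y_s)$ at large enough exponents), and applying Gronwall gives the uniform bound.

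For $(\ref{3-24})$, I would apply $D_{r_2}$ to the SDE already derived for $D_{r_1}(x_t,y_t)$. For $t\geq r_1\vee r_2$ this produces another linear SDE for the second derivative whose source terms contain tensor products $D_{r_1}(x_s,y_s)\otimes D_{r_2}(x_s,y_s)$ contracted against bounded second derivatives of $a$ and $b$, plus terms involving $D_{r_2}J_s^{\pm1}$ and $D_{r_1}J_s^{\pm1}$ paired with bounded first derivatives. The main obstacle is this quadratic nonlinearity in the first Malliavin derivatives; I would handle it by Cauchy--Schwarz, controlling each factor in $L^{2p}$ via $(\ref{3-21})$, $(\ref{3-22})$, $(\ref{3-23})$ (which is why those must be proved first and at arbitrary $p$). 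Once the source is bounded in $L^p$ uniformly in $|(x,y)|\leq R$ and $r_1,r_2\in[0,T]$, a final Gronwall argument delivers $(\ref{3-24})$.
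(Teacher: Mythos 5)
Your proposal is correct and is essentially the argument the paper relies on: the paper simply cites Theorems 2.2.1 and 2.2.2 of Nualart for $(\ref{3-21})$ and $(\ref{3-24})$ and declares the bounds for $D_rJ_t^{\pm1}$ "similar," and the proofs behind those citations are exactly your scheme of differentiating the equations, using the $C_b^2$ bounds on the coefficients, and closing with H\"older, BDG, and Gronwall. Your explicit remark that the constants depend only on $R,T,p$ via the uniform moment bounds of Lemma $\ref{100-14}$ is the one point the paper leaves implicit, and you handle it correctly.
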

\begin{proof}
$(\ref{3-21})$, $(\ref{3-24})$ are given
 in Theorem 2.2.1, Theorem 2,2,2,  \cite{Nualart}.
The other two estimations are similar.
\end{proof}

\begin{lemma}\label{100-16} Let  the Hypothesis $\ref{100-1}$ hold, then for any $p,T>0,$  there exists a constant $C(T,p,R)$ such that
\begin{align*}
\sup_{|(x,y)|\leq R}\mathbb{E}_{x,y} \big| \det({M}^{-1}_T)\big|^p\leq C(p,R,T)<\infty.
\end{align*}
\end{lemma}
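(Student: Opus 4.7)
The plan is to revisit the proof of Theorem \ref{15} carefully and verify that every constant produced can be chosen uniformly in the starting point over the compact ball $K:=\{|(x,y)|\leq R\}$. First I reduce the problem. From the factorization $M_T=J_T\tilde M_TJ_T^{*}$ in $(\ref{2-2})$ we get $\det(M_T^{-1})=(\det J_T^{-1})^{2}\det(\tilde M_T^{-1})$, so Cauchy--Schwarz together with the uniform bound on $\|J_T^{-1}\|$ from Lemma \ref{100-14} reduces matters to controlling $\mathbb{E}_{x,y}|\det(\tilde M_T^{-1})|^{p}$ uniformly in $(x,y)\in K$. By the standard reduction (Lemma 2.3.1 in \cite{Nualart}), this in turn follows from the uniform tail estimate
\[
\sup_{|(x,y)|\leq R}\sup_{|v|=1}\mathbb{P}_{x,y}\bigl(v^{*}\tilde M_Tv<\epsilon\bigr)\leq C(p,R,T)\,\epsilon^{p}
\]
valid for every $p>0$ and all $\epsilon$ small enough (independent of $(x,y)$). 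This is precisely what the proof of Theorem \ref{15} establishes for a single starting point; the task is to render every constant independent of $(x,y)$.

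The key second step is to obtain uniform choices of the geometric constants $R_{1},c,\lambda(0)$ appearing in $(\ref{2-2-1})$ and in the definitions of $\tau'$ and $\tau$. Consider the functions
\[
f_{1}(x,y)=\inf_{|v|=1}\sum_{j=1}^{j_{0}}\sum_{V\in\mathcal{A}_{j}}(v^{*}V(x,y))^{2},\qquad f_{2}(x,y)=\inf_{|v|=1}v^{*}b(x,y)b^{*}(x,y)v.
\]
Each is continuous on $\mathbb{R}^{m+n}$ (infimum over the compact sphere of a continuous function of $(x,y)$) and strictly positive on $K$ by Hypothesis \ref{100-1}. Hence $\{f_{1}>0\}\cap\{f_{2}>0\}$ is an open set containing the compact $K$, and compactness produces $\delta>0$ and $c_{0}>0$ with $f_{1}\wedge f_{2}\geq c_{0}$ on the closed tubular neighborhood $K_{\delta}:=\{d(\cdot,K)\leq\delta\}$. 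Choosing $R_{1}=\delta$ yields a uniform constant $c=c_{0}$ in $(\ref{2-2-1})$ valid for every $(x,y)\in K$ and every $(x',y')\in B((x,y),R_{1})\subseteq K_{\delta}$; similarly $\lambda(0)\geq c_{0}$ uniformly, producing uniform $R_{3}=c_{0}/2$ and $C_{0}=2/c_{0}$. The uniform moment bounds for $(x_{t},y_{t})$, $J_{t}$ and $J_{t}^{-1}$ are supplied by Lemma \ref{100-14}, replacing the pointwise versions of Lemmas \ref{2-15} and \ref{32}.

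With these ingredients in place, the third step is to traverse Lemmas \ref{22}, \ref{2-290}, \ref{29}, \ref{19}, \ref{20}, \ref{21} and \ref{2-1-6} in turn and observe that each constant can be replaced by one depending only on $p,T,R,q$. The tail bound for $\tau$ (Lemma \ref{2-290}) is uniform because it uses only the uniformly controlled moments of $(x_{t},y_{t})$, $J_{t}^{-1}$ and the uniform lower bound on $\lambda(0)$. The H\"older-type estimate of Lemma \ref{29} has constants controlled by the sup-norms of $\nabla a$ and $\nabla b$ over $K_{\delta}$. The central Lemmas \ref{20} and \ref{21}, which feed the generic one-dimensional criteria Lemmas \ref{23} and \ref{25} with adapted processes built from $A_{s},B_{s},C_{s},D_{s}$, need only the moments of these processes (uniform via Lemma \ref{100-14}) together with the uniform tail on $\tau$. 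Finally Lemma \ref{2-1-6}, which invokes $(\ref{2-2-3})$ with the constant $c$, now holds with $c=c_{0}$ and a threshold $\epsilon_{0}=\epsilon_{0}(q,R)$. Chaining these through $(\ref{2-402})$--$(\ref{2-403})$ yields the required uniform tail bound on $v^{*}\tilde M_Tv$, which when combined with the reduction in Step 1 completes the proof.

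The main obstacle is the bookkeeping in the third step: one must verify, lemma by lemma, that no constant in the proof of Theorem \ref{15} secretly depends on $(x,y)$ except through quantities already shown to be uniformly controlled. The most delicate point is that the test direction $v\in\mathbb{R}^{m+n}$ ranges over the unit sphere, so a single threshold $\epsilon_{0}(q,R)$ must serve simultaneously for all $(x,y)\in K$ and all $|v|=1$; this works because the $\epsilon_{0}$ produced by Lemmas \ref{23}, \ref{25} and \ref{2-1-6} is defined in terms of the uniform constants $c_{0}$, $C_{0}$ and the uniform moment bounds, none of which depend on $v$ or on the specific starting point.
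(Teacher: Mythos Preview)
Your proposal is correct and follows essentially the same route as the paper's own proof: reduce to a uniform tail estimate for $v^{*}\tilde M_{T}v$, then revisit the proof of Theorem \ref{15} and argue that every constant (in particular $R_{1},c,R_{3},C_{0}$ and the moment bounds) can be chosen depending only on $R$ via continuity/compactness and Lemma \ref{100-14}. Your compactness argument with the tubular neighborhood $K_{\delta}$ is a slightly more explicit version of the paper's argument via the continuous function $\Lambda(x,y)$, but the logic is the same.
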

\begin{proof}
From $(\ref{2-3-2})$ and $(\ref{2-3-1})$,
it only need  to prove   for any $p>0$,
there exists a constant  $C(p,R,T)$ such that
\begin{eqnarray}\label{3-1}
\sup_{|(x,y)|\leq R}\sup_{|v|=1}\mathbb{P}_{x,y}\{v^{*}\tilde{M}_{T}v\leq \epsilon \}\leq C(p,R,T)\epsilon
^{p},\ \ \ \forall \epsilon >0.
\end{eqnarray}
All the    constants appeared in the proof
  of Theorem $\ref{15}$,  we  can  choose them  depending on $R$ but  independent of the  $(x,y)\in \overline{B}(0,R)$  if the Hypothesis $\ref{100-1}$  holds.
So, $(\ref{3-1})$ holds.
In the following paragraphs, we will list  the changes  in   the proof
  of Theorem $\ref{15}$ when we prove this Lemma.

 (1) $R_1$ in $(\ref{2-2-1}), (\ref{2-122})$, $c$ in  $(\ref{2-2-1})$ and  Lemma $\ref{2-1-6}$. Define
   \begin{eqnarray*}
 \Lambda(x,y):=\inf_{|v|=1}\Big(\sum_{j=1}^{j_0}\sum_{V\in\mathcal{A}_j}(v^*V(x,y)V^*(x,y)v)\Big).
 \end{eqnarray*}
 For any $(x,y)\in \overline{B}(0,R)$, $\Lambda(x,y)>0.$
And also for  $ \Lambda(x,y)$ is continuous w.r.t $(x,y)$ (the reason is the same as that  in  $(\ref{2-6})$),  there exists a constant $R_1$ such that
 \begin{eqnarray*}
 \inf_{|(x,y)|\leq R+R_1} \Lambda(x,y)>c:=\frac{1}{2} \inf_{|(x,y)|\leq R} \Lambda(x,y)>0.
 \end{eqnarray*}
If  we choose  $c$ and $R_1$ as above,  we can prove that  the
following inequality holds,
\begin{eqnarray*}
\sum_{j=1}^{j_0}\sum_{V\in \mathcal{A}_{j}}\big(v^{* }V(x',y')\big)^{2}\geq c, \ \forall  (x',y')\in \overline{B}((x,y), R_1), ~(x,y)\in \overline{B}(0,R),~|v|=1.
\end{eqnarray*}

(2)  $R_3$ in $(\ref{2-137})$,   $C_0$ in Lemma $\ref{19}$  and  Lemma $\ref{29}$, set
\begin{eqnarray*}
R_3=\frac{1}{C_0}:=\frac{1}{2} \inf_{(x,y)\in \overline{B}(0,R)} \inf_{|v|=1}\left(v^*b(x,y)b^*(x,y)v\right)>0.
\end{eqnarray*}
From the choosing of $R_3$ and the definition of $\tau'$,  if the Hypothesis $\ref{100-1}$ holds,
 then for any  $s \leq  \tau'$  and process $(x_s,y_s)$ with initial value $(x,y)\in \overline{B}(0,R)$,  the following inequality holds,
\begin{eqnarray*}
\begin{split}
 &|v|^2 \leq C_0\sum_{j=1}^d|v^{*} b_j(x_s,y_s)|^2,  \ \forall v\in  \mathbb{R}^n.
 \end{split}
\end{eqnarray*}
From the above fact and $\tau \leq \tau'$,   we can also  choose  the  following constants
\begin{eqnarray*}
 && C  \  \text{in  Lemma}\  \ref{20};
  \ \ \epsilon_0(q,x,y) \ \text{in Lemma} \ \ref{21};
   \\  &&C(p,T,x,y) \ \text{in}\  (\ref{2-10})    \text{ and  Lemma} \ \ref{21},
 \end{eqnarray*} depending on $R$, but independent of  the  $(x,y)\in \overline{B}(0,R)$.

 (3) The estimate of $\tau$ in Lemma $\ref{2-290}$.  From   Lemma $\ref{100-14}$, for these constants appeared in the proof of   Lemma  $\ref{22}$ and  Lemma   $\ref{2-290}$,  we can choose  them depending on $R$ but  independent of the  $(x,y)\in \overline{B}(0,R)$.

(3) The using   of  Lemma $\ref{23}$ and Lemma $\ref{25}$   in Lemma $\ref{20}$ and   Lemma $\ref{21}$. For example,  in  Lemma $\ref{20}$,  we need to estimate the following probability for some constant $C(R)$,
\begin{eqnarray*}
\mathbb{P}\left\{\int_{0}^{\tau }\|\tilde{y}(s)\|^{2}ds\leq (1+T^{2}C(R))\epsilon ^{q^{3j_{0}+6}},\int_{0}^{\tau }|(v_{1}^{* }A_{s}+v_{2}^{* }C_{s})\nabla
_{y}a_{1}|^{2}ds\geq \epsilon ^{q^{3j_{0}}}\right\},
\end{eqnarray*}
here
\begin{eqnarray*}
d\tilde{y}(s)=-(v_{1}^{* }A_{s}+v_{2}^{* }C_{s})\nabla
_{y}a_{1}ds-\sum_{j=1}^{d}(v_{1}^{* }B_{s}+v_{2}^{* }D_{s})\nabla _{y}b_{j}\cdot
dW_{j}(s).
\end{eqnarray*}
We need  to  check the condition $(\ref{2-126})$  when using    Lemma $\ref{23}$.
Assume  $d(v_{1}^{* }A_{s}+v_{2}^{* }C_{s})\nabla
_{y}a_{1}(x_s,y_s) =K_1(s)ds $ $  +\sum_{j=1}^d K_{2j}(s)dW_j(s)$. From   Lemma $\ref{100-14}$  and the fact
 when  $s\leq \tau $, $|(x,y)|\leq R$  and  $|(x_s,y_s)|\leq R+R_1$,   for any $p>0,$
  there exists  a constant $C=C(T,p,R)$  such that,
\begin{eqnarray*}
  &&\mathbb{E}_{x,y}\sup_{0\leq t \leq \tau} \bigg(| K_1(s)|^p+|K_2(s)|^p+|(v_{1}^{* }A_{s}+v_{2}^{* }C_{s})\nabla
_{y}a_{1}(x_s,y_s)|
+\sum_{j=1}^d|(v_{1}^{* }B_{s}+v_{2}^{* }D_{s})\nabla _{y}b_j|\bigg)^p
\\&&\leq C(p,R,T)<\infty.
\end{eqnarray*}

(4) For the other constants appeared in the  proof of Theorem $\ref{15}$,  we  can  also choose them  depending on $R$ but  independent of the $(x,y)\in \overline{B}(0,R)$.
\end{proof}

We are now in a position to give
\begin{proof}
{\bf\text The proof of Theorem $\ref{2-39}$:}
For  any  $\xi \in \mathbb{R}^{m+n}$,
\begin{eqnarray*}
  \langle \nabla P_{t}f\left( x,y\right) ,\xi \rangle &=&\mathbb{E}_{x,y}\nabla
f\left(
x_t,y_t \right) J_t \xi.
\end{eqnarray*}
Assume $x_t=(x_t^1,\cdots,x_t^m)$ and $y_t=(y_t^1,\cdots,y_t^n)$, then  from $(2.29),(2.30)$  in \cite{Nualart},
\begin{eqnarray*}
&& \mathbb{E}_{x,y}\Big\{\nabla_{i}
f\left(
x_t,y_t \right) J_t \xi\Big\}
\\ && =\sum_{k=1}^{m}\mathbb{E}\Big\{f(x_t,y_t)\delta\big(J_t \xi (M_t^{-1})^{i,k}x_t^{k}\big)\Big\}+\sum_{k=m+1}^{m+n}\mathbb{E}\Big\{f(x_t,y_t)\delta\big(J_t \xi (M_t^{-1})^{i,k}y_t^{k-m}\big)\Big\}.
\end{eqnarray*}
So, this Theorem  comes from  Lemma $\ref{100-14}$,  Lemma  $\ref{3-25}$,
Lemma $\ref{100-16}$ and   Proposition 1.5.8  in \cite{Nualart}.
\end{proof}

In the end of this section, we give a Proposition which  is supplementary to this article.
\begin{proposition}\label{2-35}
Let  $a_1,a_2,b \in C_b^2$ and the Hypothesis $\ref{2-18}$ hold,  then the law of
$(x_t,y_t)$  with  initial value $(x,y)$    is absolutely
continuous with respect to Lebesgue measure and its density function
$p(t,(u,v))$ is continuous w.r.t $(u,v) \in  \mathbb{R}^m\times \mathbb{R}^n$ for fixed t.
Furthermore, the following estimation holds
 \[
 \sup_{(u,v)\in \mathbb{R}^m \times \mathbb{R}^n}|p(t,(u,v))| <\infty.
 \]
\end{proposition}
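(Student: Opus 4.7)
The plan is to combine Theorem \ref{15} with the standard Malliavin-calculus machinery for densities. Since $a_1,a_2,b\in C_b^2$, the classical SDE theory for Malliavin calculus (e.g.\ Theorem 2.2.1 in \cite{Nualart}) gives $(x_t,y_t)\in\mathbb{D}^{2,p}$ for every $p\in[1,\infty)$, with analogous statements for $J_t$, $J_t^{-1}$, and hence for $M_t$. Theorem \ref{15} supplies the missing non-degeneracy information $\det(M_t^{-1})\in\bigcap_{p<\infty}L^p(\Omega)$. The classical Malliavin / Bouleau--Hirsch criterion (cf.\ Corollary 2.1.2 in \cite{Nualart}) then yields existence of a density $p(t,\cdot)$ of the law of $(x_t,y_t)$ with respect to Lebesgue measure on $\mathbb{R}^{m+n}$.

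For continuity and boundedness I would employ the iterated integration-by-parts formula. Setting $F=(x_t,y_t)$, one applies Malliavin integration by parts in each of the $m+n$ coordinates to get the representation
\[
p(t,z)=\mathbb{E}\Big[\mathbf{1}_{\{F\ge z\}}\,H_F(1)\Big],\qquad z\in\mathbb{R}^{m+n},
\]
where $H_F(1)$ is an iterated Skorohod integral (a ``Malliavin weight'') built from $DF$, $M_t^{-1}$, and their Malliavin derivatives; see Proposition 2.1.4 in \cite{Nualart}. From this identity, continuity of $p(t,\cdot)$ is immediate by dominated convergence, while the supremum bound becomes $\sup_{z}|p(t,z)|\le\|H_F(1)\|_{L^1(\Omega)}$.

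The hard part is to show $H_F(1)\in L^1(\Omega)$. This is carried out by H\"older's and Meyer's inequalities, reducing the problem to the $L^p$-integrability of $\det(M_t^{-1})$ (furnished by Theorem \ref{15}) together with $L^p$ bounds on the higher Malliavin derivatives of $F$ and $M_t$; the relevant identity $D M_t^{-1}=-M_t^{-1}(DM_t)M_t^{-1}$ propagates the non-degeneracy information through the successive divergences. Because the coefficients are only assumed $C_b^2$, a direct appeal to higher-order Malliavin smoothness results is not available, so one must track carefully which derivatives of which coefficients actually enter $H_F(1)$ and verify that the extra regularity $a_1\in C^{j_0+2}$ from Hypothesis \ref{2-18} is sufficient. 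If this direct route does not close, the natural remedy is an approximation argument: mollify the coefficients to $(a_1^{\varepsilon},a_2^{\varepsilon},b^{\varepsilon})\in C_b^{\infty}$, apply the full smoothness-of-density theorem to the regularized SDE, and pass to the limit using uniform $L^p$-bounds on $\det((M_t^{\varepsilon})^{-1})$ obtained by repeating the estimates of Section 2 with constants that are stable under mollification. Identifying the limit as a continuous bounded function and controlling the approximation error in sup-norm constitutes the bulk of the technical work.
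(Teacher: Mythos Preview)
Your approach---combine Theorem \ref{15} with a standard Malliavin-calculus density criterion---is exactly what the paper does; its entire proof is a one-line citation of Theorem 5.9 in \cite{Shigekawa} together with Theorem \ref{15}. Your careful tracking of the regularity needed for the iterated integration-by-parts and the fallback mollification argument go well beyond what the paper itself supplies, since it simply defers those issues to the cited black-box result in \cite{Shigekawa}.
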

\begin{proof}
  It directly comes from  the   Theorem 5.9 in \cite{Shigekawa} and the   Theorem $\ref{15}$  in this article.
\end{proof}

\section{Strong Feller Property}
In this section, we prove that the semigroup $P_t$  associated with Eq.$(\ref{1-1})$  is strong Feller under some conditions.
 From Theorem  $\ref{2-39}$, $P_t$ is strong Feller under some conditions which need   all the coefficients for Eq.$(\ref{1-1})$ are in  $C_b^2.$
But in the Hamiltonian  systems, the diffusion and drift part are
polynomial growth, so the Theorem  $\ref{2-39}$ can't apply directly.
But if the SDE has global solution, we can also prove $P_t$ is
strong Feller without the bounded conditions.

The followings are our Hypothesis and Theorem in this section.
\begin{hypothesis}
\label{3-20} There exists $j_0 \in \mathbb{N}$ such that:
\begin{description}
  \item[(\romannumeral1)]
  $a_1 \in C^{j_0+2}(\mathbb{R}^m\times \mathbb{R}^n;\mathbb{R}^m), ~a_2
\in C^2(\mathbb{R}^m\times \mathbb{R}^n;\mathbb{R}^n);$
  \item[(\romannumeral2)]
  $b \in
C^2(\mathbb{R}^m\times
\mathbb{R}^n;\mathbb{R}^n\times \mathbb{R}^d )$,~
$\det(b(x,y)\cdot b^{*}(x,y))\neq 0,~\forall (x,y) \in \mathbb{R}^m \times \mathbb{R}^n;$
  \item[(\romannumeral3)]
  $\forall (x,y) \in
\mathbb{R}^{m}\times \mathbb{R}^{n},$  the vector space spanned by  $\cup_{k=1}^{j_0} \mathcal{A}_{k}$ at  point $(x,y)$ has dimension $m;$
  \item[(\romannumeral4)]
  The solution to  equation $(\ref{1-1})$  globally exists  for any initial value $(x,y) \in \mathbb{R}^{m}\times \mathbb{R}^{n} .$
\end{description}
\end{hypothesis}
\begin{Rem}
If there exists a Liapunov function $W$ such that $LW\leq cW$ for some $c>0$, then
the  $\bf{(\romannumeral4)}$ in Hypothesis $\ref{3-20}$ holds by Theorem  5.9,
\cite{Bellet}. Here
\begin{align*}
  L=\sum_{i=1}^ma_1^{i}\frac{\partial }{\partial x_i}+\sum_{i=1}^na_2^i\frac{\partial }{\partial y_i}+
  \frac{1}{2}\sum_{i,j=1}^n(b\cdot b^{*})_{i,j}\frac{\partial^2}{\partial  y_i \partial y_j}.
\end{align*}

\end{Rem}
\begin{thm} \label{4-16}
Let  the Hypothesis $\ref{3-20}$ hold, then~$ P_t$~ is  strong
Feller.
\end{thm}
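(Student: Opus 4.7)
The plan is to reduce to Theorem \ref{2-39} by a cutoff-and-couple argument. Fix $(x_0,y_0)\in\mathbb{R}^{m+n}$, $t>0$, and $f\in\mathscr{B}_b(\mathbb{R}^{m+n};\mathbb{R})$; I will verify that $P_t f$ is continuous at $(x_0,y_0)$. Set $R_0:=|(x_0,y_0)|+1$, and for each $R>R_0+1$ choose a cutoff $\chi_R\in C_c^{\infty}(\mathbb{R}^{m+n};[0,1])$ equal to $1$ on $\overline{B}(0,R)$ and to $0$ off $\overline{B}(0,R+1)$. Define the truncated coefficients
$$a_1^R:=\chi_R a_1,\quad a_2^R:=\chi_R a_2,\quad b^R:=\chi_R b.$$
By compactness of support these lie in $C_b^2$ (with $a_1^R\in C_b^2\cap C^{j_0+2}$), and they coincide with the original coefficients on $\overline{B}(0,R)$. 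Consequently $b^R(b^R)^\ast$ is non-degenerate on $\overline{B}(0,R)$, and the auxiliary vector fields built from $a_1^R$ agree with $\mathcal{A}_k$ on $\overline{B}(0,R)$. Hence the truncated SDE satisfies Hypothesis \ref{100-1} with radius $R_0$.

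Let $P_t^R$ denote the associated semigroup. Theorem \ref{2-39} yields $|\nabla P_t^R g(x,y)|\leq C(R_0,t)\|g\|_\infty$ for all $g\in C_b^1$ and $|(x,y)|\leq R_0$. To pass from $C_b^1$ to $\mathscr{B}_b$, I approximate $f$ by $f_n\in C_b^\infty$ with $\|f_n\|_\infty\leq\|f\|_\infty$ and $f_n\to f$ Lebesgue-a.e. Proposition \ref{2-35}, applied to the truncated system at each starting point in $\overline{B}(0,R_0)$ (Hypothesis \ref{2-18} is inherited from Hypothesis \ref{3-20} and $a_1^R,a_2^R,b^R\in C_b^2$), shows that the law of $(x_t^R,y_t^R)$ is absolutely continuous, so $P_t^R f_n\to P_t^R f$ pointwise on $\overline{B}(0,R_0)$. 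Combined with the uniform Lipschitz bound on $P_t^R f_n$, this forces $P_t^R f$ to be Lipschitz on $\overline{B}(0,R_0)$ with constant $C(R_0,t)\|f\|_\infty$.

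Next I couple. Driving the original and the truncated SDEs by the same Brownian motion from the same initial point and using pathwise uniqueness (granted by the $C^2$ regularity of all coefficients), the two solutions coincide up to the exit time $\tau_R(x,y):=\inf\{s\geq 0:|(x_s,y_s)|>R\}$ of the original process. Hence
$$|P_tf(x,y)-P_t^Rf(x,y)|\leq 2\|f\|_\infty\,\mathbb{P}_{x,y}(\tau_R\leq t).$$
Hypothesis \ref{3-20}(iv) gives $\tau_R\to\infty$ almost surely, pointwise in $(x,y)$; to promote this to uniform-on-compacts control I invoke the classical tightness of SDE flows with locally Lipschitz coefficients: the map $(x,y)\mapsto\mathrm{Law}_{x,y}((x_\cdot,y_\cdot))$ is continuous into the Polish space of probability measures on $C([0,t];\mathbb{R}^{m+n})$, so the image of the compact set $\overline{B}((x_0,y_0),1)$ is tight, whence
$$\lim_{R\to\infty}\sup_{(x,y)\in\overline{B}((x_0,y_0),1)}\mathbb{P}_{x,y}(\tau_R\leq t)=0.$$

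Assembling these pieces via the triangle inequality
$$|P_tf(x,y)-P_tf(x_0,y_0)|\leq |P_tf-P_t^Rf|(x,y)+|P_t^Rf(x,y)-P_t^Rf(x_0,y_0)|+|P_tf-P_t^Rf|(x_0,y_0)$$
yields continuity of $P_tf$ at $(x_0,y_0)$: choose $R$ so that the outer terms are $<\varepsilon/3$ uniformly in $(x,y)$ near $(x_0,y_0)$, then use the Lipschitz bound from the second paragraph on the middle term. The main obstacle is this uniform exit-probability bound: the pointwise statement $\mathbb{P}_{x,y}(\tau_R\leq t)\to 0$ from global existence is insufficient, and one must upgrade it to uniform control on a neighborhood via tightness of the flow. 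This is the one place where Hypothesis \ref{3-20}(iv) plays a genuinely substantive role beyond ensuring that $P_t$ is well defined.
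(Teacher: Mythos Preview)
Your argument is correct and follows the same localization strategy as the paper: truncate the coefficients, apply Theorem \ref{2-39} to the truncated semigroup, and then control the error by exit probabilities. The two proofs diverge only in how they handle the last step. The paper does not attempt to get \emph{uniform} control of $\mathbb{P}_{x,y}(\tau_R\le t)$ over a neighborhood; instead it proves a pathwise comparison (Lemma \ref{3-70}) showing $\limsup_{x\to x_0} I_{\{t>S_l(x)\}}\le I_{\{t\ge S_{l-1}(x_0)\}}$ almost surely, and then runs a Fatou-type $\limsup$ argument on $\|g\|_\infty\pm g$. Your route---continuity of $(x,y)\mapsto\mathrm{Law}_{x,y}$ on path space, hence tightness on compacts, hence uniform exit control---is cleaner to state and gives a quantitative Lipschitz bound on $P_tf$ (not just continuity), but it invokes a fact whose standard proof is itself a localization argument of exactly the kind the paper writes out by hand. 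So the paper's version is more self-contained, while yours is shorter once one is willing to cite the path-space Feller property as known. Your explicit use of Proposition \ref{2-35} to pass from $C_b^1$ to $\mathscr{B}_b$ test functions is also a point the paper leaves implicit in Lemma \ref{3-92}.
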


For the convenience of writing, we will use $x$ instead of $(x,y)$ in the rest of this section.
Let $X_t^x=(x_t,y_t)$ be the solution of $(\ref{1-1})$ with initial
value $x\in \mathbb{R}^m\times  \mathbb{R}^n$. In the following part,
we would like to use the localization to prove Theorem $\ref{4-16}$.

For any fixed $l\in \mathbb{N}, $ set  $a(x)=(a_1^{*}(x),a_2^{*}(x))^{*},$ $g_{l}(x)=h_l(x)a(x),$ $q_l(x)=h_l(x) b(x), h_l(x)\in \mathbb{R}$  is a smooth function with compact support  and  $h_l(x)=1$\ on $B^{\circ}(0,l).$
Let $X^l_s(x)$\ be the solution to the following equation,
\begin{eqnarray}
\label{3-35}
\begin{split}
&\ \ \
X^l_s(x)=x+\int_0^s g_{l}(X^l_r(x))dr
 +\int_0^s\left(
\begin{matrix}
 0 \\
 q_l(X^l_r(x))%
\end{matrix}%
\right) dW_{r}.
\end{split}
\end{eqnarray}
Define a sequence of stopping time
\begin{eqnarray*}
S_l(x)&=&\inf\{s>0,~X^l_s(x)\not\in B^{\circ}(0,l)\},\ l\geq1.
\end{eqnarray*}
If the Hypothesis $\ref{3-20}$ holds,    then for any $x\in\mathbb{R}^{m+n},$ the following properties holds  a.s.
\begin{align}
\label{3-3}
 & S_l(x)<S_{l+1}(x),\\
\label{3-4}& X^l_s(x)=X^{l+1}_s(x),~  \forall  s \in  \text{$[0,S_l(x))$},
\\ \label{3-5} & X_s^x=X^l_s(x),~~~  \ \  \forall  s \in \text{$[0,S_l(x))$},
\\ \label{3-6}& \sup_{l}S_l(x)=\infty.
\end{align}

In order to prove  Theorem $\ref{4-16}$, we also   need the following  Lemmas.
\begin{lemma}\label{3-70}
Let  the Hypothesis $\ref{3-20}$ hold, then for any $x_0 \in \mathbb{R}^{m+n},$ $l\geq 2,~t>0$
$$
\limsup_{x\rightarrow x_0} I_{\{t>S_l(x)\}}\leq I_{\{t\geq
S_{l-1}(x_0)\}},~ a.s.
$$
\end{lemma}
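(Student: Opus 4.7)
The plan is to convert the indicator inequality into a statement about stability of the exit time $S_l(x)$ under perturbation of the starting point. Taking complements of the indicators, the claim is equivalent to: almost surely, on the event $\{t<S_{l-1}(x_0)\}$, there is a random neighborhood of $x_0$ on which $S_l(x)>t$. So it suffices to show $\liminf_{x\to x_0}S_l(x)\geq t$ a.s.\ on $\{t<S_{l-1}(x_0)\}$.

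The key input is that, after truncation by $h_l$, equation (\ref{3-35}) has globally Lipschitz $C^2$ coefficients with compact support. By standard BDG and Gronwall estimates, for every $T>0$ and $p\geq 2$,
\begin{equation*}
\mathbb{E}\sup_{0\leq s\leq T}\bigl|X^l_s(x)-X^l_s(x')\bigr|^p\leq C(T,p,l)\,|x-x'|^p,
\end{equation*}
so Kolmogorov's continuity criterion produces a modification of $(s,x)\mapsto X^l_s(x)$ which is jointly continuous on $[0,T]\times\mathbb{R}^{m+n}$. I would fix such a modification once and for all.

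With this modification in hand, fix $\omega$ in the full-measure set where joint continuity and (\ref{3-3})--(\ref{3-6}) hold and assume $t<S_{l-1}(x_0)(\omega)$. Choose $t'\in(t,S_{l-1}(x_0)(\omega))$. By (\ref{3-4}), $X^l_s(x_0)(\omega)=X^{l-1}_s(x_0)(\omega)\in B^{\circ}(0,l-1)$ for every $s\in[0,t']$; since $s\mapsto|X^l_s(x_0)(\omega)|$ is continuous on the compact interval $[0,t']$, its maximum $r$ satisfies $r<l-1<l$. Joint continuity in $x$ gives a neighborhood $U$ of $x_0$ with $\sup_{s\in[0,t']}|X^l_s(x)(\omega)|<l$ for every $x\in U$, so $S_l(x)(\omega)\geq t'>t$, hence $I_{\{t>S_l(x)\}}(\omega)=0$ on $U$. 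Therefore $\limsup_{x\to x_0}I_{\{t>S_l(x)\}}(\omega)=0$ on $\{t<S_{l-1}(x_0)\}$, which is the desired inequality.

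The only real point requiring care is the existence of the jointly continuous modification of the flow; once that is in place the argument is purely topological, and the truncation in (\ref{3-35}) is precisely what makes the SDE moment estimates uniform enough for Kolmogorov's criterion to apply without any additional hypothesis beyond Hypothesis \ref{3-20}.
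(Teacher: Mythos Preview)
Your proof is correct and follows essentially the same approach as the paper's own proof: both rely on the existence of a jointly continuous modification of the truncated flow $(s,x)\mapsto X^l_s(x)$, then use the consistency relation (\ref{3-4}) together with a compactness/uniform-continuity argument on $[0,t]\times\{x_0\}$ to show that near $x_0$ the path stays inside $B^{\circ}(0,l)$ up to time $t$. The only differences are cosmetic: the paper argues by contradiction (assuming $\limsup=1$ and $t<S_{l-1}(x_0)$ simultaneously and reaching a contradiction), whereas you argue directly; and you spell out the Kolmogorov-criterion justification for the jointly continuous modification, which the paper simply asserts.
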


\begin{proof}
Let $\Gamma$ be a measurable set with $\mathbb{P}(\Gamma^c)=0$ and such that $X^{l}_s(x,\omega)$ is continuous  w.r.t.
$s$ and  $x$ for $\omega\in\Gamma$. For $\omega \in \Gamma$, the conclusion is apparent if
\begin{eqnarray*}
\limsup_{x\rightarrow x_0}I_{\{t>S_l(x)\}}(\omega)=0\text{ or }I_{\{t\geq S_{l-1}(x_0)\}}(\omega)=1.
\end{eqnarray*}
Assume that $\limsup\limits_{x\rightarrow x_0}I_{\{t>S_l(x)\}}(\omega)=1$ and $I_{\{t\geq S_{l-1}(x_0)\}}(\omega)=0$,
then
\begin{align}
\sup_{s \in [0,t]} |X^{l-1}_s(x_0,\omega)| \leq  l-1.
\end{align}
Furthermore, by $(\ref{3-4})$,
\begin{eqnarray*}
\sup_{s \in [0,t]} |X^{l}_s(x_0,\omega)| \leq  l-1.
\end{eqnarray*}
Since $\limsup\limits_{x\rightarrow x_0}I_{\{t>S_l(x)\}}(\omega)=1$, then there exist $\{x_n\}\subset\mathbb{R}^{m+n}$ with
$x_n\rightarrow x_0$ as $n\rightarrow\infty$, such that for $n$ large enough
\begin{align}\label{5-2}
\sup_{s \in [0,t]}|X^{l}_s(x_{n},\omega)|\geq l.
\end{align}
From  $t<S_{l-1}(x_0)$,  $(\ref{3-4})$  and  $\sup_{s \in [0,t]}
|X^{l-1}_s(x_0,\omega)|\leq  l-1$,
\begin{align}\label{3-50}
\sup_{s \in [0,t]} |X^{l}_s(x_0,\omega)| \leq  l-1.
\end{align}

For  $X^{l}_s(x,\omega)$ is continuous  w.r.t.
$s$ and  $x$  and $[0,t]\times \overline{B}(0,1)\subseteq [0,\infty)\times \mathbb{R}^{m+n}$ is a compact set, so for $\epsilon_0=\frac{1}{2}$ there exists  $\delta_0>0$ such that for any $|x-x_0|\leq \delta_0$ and $s\in[0,t]$
\begin{align*}
|X^{l}_s(x_0,\omega)-X^{l}_s(x,\omega)|\leq \frac{1}{2}.
\end{align*}
That means when  $|x-x_0|\leq \delta_0$,
\begin{align*}
  \sup_{s\in [0,t]}|X^{l}_s(x,\omega)|\leq  \sup_{s \in [0,t]} |X^{l}_s(x_0,\omega)| +\frac{1}{2},
\end{align*}
this contradict $(\ref{5-2})$ and $(\ref{3-50})$.
\end{proof}

Let $\{P^l_t\}_{t\geq0}$ be the transition semigroup of $(\ref{3-35})$.
\begin{lemma}\label{3-92}
Let  the Hypothesis $\ref{3-20}$ hold, then  for any $f\in\mathscr{B}_b(\mathbb{R}^{m+n};\mathbb{R})$,  $P^l_tf$ is continuous on $B^{o}(0,l)$.
\end{lemma}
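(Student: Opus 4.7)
The plan is to reduce the statement to the gradient estimate of Theorem~\ref{2-39}, applied to the truncated equation (\ref{3-35}), together with an approximation of bounded Borel functions by smooth ones that exploits the absolutely continuous marginals coming from Proposition~\ref{2-35}.

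Fix $x_0 \in B^{\circ}(0,l)$ and choose $R_0 > 0$ with $|x_0| < R_0$ and $\overline{B}(0,R_0) \subset B^{\circ}(0,l)$. Since $h_l \equiv 1$ on $B^{\circ}(0,l)$, the truncated coefficients satisfy $g_l = a$ and $q_l = b$ there, so the families $\mathcal{A}_k$ built from $(g_l,q_l)$ coincide at every point of $\overline{B}(0,R_0)$ with the families built from $(a_1,a_2,b)$. Because $h_l \in C_c^\infty$ while $a_1 \in C^{j_0+2}$ and $a_2, b \in C^2$ under Hypothesis~\ref{3-20}, the truncated coefficients are compactly supported and hence $g_{l,1} \in C_b^{j_0+2}$ and $g_{l,2},q_l \in C_b^2$; combining this with Hypothesis~\ref{3-20}(ii)-(iii) verifies Hypothesis~\ref{100-1} for the equation (\ref{3-35}) with this $R_0$ and the same $j_0$. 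Theorem~\ref{2-39} then furnishes a constant $C(R_0,t)$ such that
\begin{equation*}
|\nabla P^l_t f(x)| \leq C(R_0,t)\,\|f\|_\infty, \qquad x \in \overline{B}(0,R_0),\ f \in C_b^1(\mathbb{R}^{m+n};\mathbb{R}),
\end{equation*}
so $P^l_t f$ is Lipschitz on $\overline{B}(0,R_0)$ with constant $C(R_0,t)\|f\|_\infty$ for every $f \in C_b^1$.

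To pass from $C_b^1$ to $f \in \mathscr{B}_b(\mathbb{R}^{m+n};\mathbb{R})$, I would approximate $f$ by its mollifications $f_n = f * \rho_n \in C_b^\infty$, so that $\|f_n\|_\infty \le \|f\|_\infty$ and $f_n \to f$ Lebesgue-a.e. Since the truncated coefficients are in $C_b^2$ and satisfy Hypothesis~\ref{2-18} at every point of $B^{\circ}(0,l)$ (by the same comparison of the $\mathcal{A}_k$ used above), Proposition~\ref{2-35} applies to (\ref{3-35}) and gives that the law of $X^l_t(x)$ is absolutely continuous with respect to Lebesgue measure for each $x \in \overline{B}(0,R_0)$. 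Dominated convergence then yields $P^l_t f_n(x) \to P^l_t f(x)$ pointwise on $\overline{B}(0,R_0)$, and taking the limit in the Lipschitz bound gives
\begin{equation*}
|P^l_t f(x_1) - P^l_t f(x_2)| \leq C(R_0,t)\,\|f\|_\infty\,|x_1 - x_2|, \qquad x_1, x_2 \in \overline{B}(0,R_0),
\end{equation*}
which yields continuity of $P^l_t f$ at $x_0$. As $x_0 \in B^{\circ}(0,l)$ was arbitrary, the claim follows.

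The only mildly delicate point is the approximation step: one needs $P^l_t f_n(x) \to P^l_t f(x)$ for every $x$ in the ball, which forces the use of the absolutely continuous law supplied by Proposition~\ref{2-35} (and hence ultimately of the $L^p$ integrability of $\det(M_T^{-1})$ proved in Theorem~\ref{15}). Once this is available, the gradient estimate of Theorem~\ref{2-39} does essentially all of the work; the verification of Hypothesis~\ref{100-1} for (\ref{3-35}) on $\overline{B}(0,R_0)$ is entirely mechanical thanks to the choice $h_l \equiv 1$ on $B^{\circ}(0,l)$.
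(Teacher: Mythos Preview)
Your proposal is correct and follows essentially the same route as the paper: verify Hypothesis~\ref{100-1} for the truncated equation~(\ref{3-35}) on a ball $\overline{B}(0,R_0)\subset B^{\circ}(0,l)$ (using that $h_l\equiv 1$ there makes the truncated coefficients coincide with the original ones), and then invoke the gradient estimate of Theorem~\ref{2-39}. The paper's proof stops at the bound $|\nabla P_t^l f|\le C(l_0,t)\|f\|_\infty$ without explaining the passage from $f\in C_b^1$ to general $f\in\mathscr{B}_b$; your mollification argument, combined with the absolute continuity of the law of $X_t^l(x)$ from Proposition~\ref{2-35}, fills in this step cleanly and is a genuine improvement in rigor over the paper's presentation.
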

\begin{proof}
For any $0<l_0<l,$ since $b_l=b$ on $\overline{B}(0,l)$, then
 \begin{align*}
 \inf_{y \in \overline{B}(0,l_0)}\inf_{|a|=1}\left(|aA|^2+|a\nabla b_l(y)A|^2\right)>0.
\end{align*}
In the Hypothesis $\ref{100-1}$ and the  proof of Theorem $\ref{2-39}$, let $R=l_0$ and substitute $b_l$ for $b$, then
\begin{eqnarray*}\label{3-2}
 |\nabla P_t^lf(y)|\leq C(l_0,t)\|f\|_\infty, \ \  \forall f\in C_b,\forall y\in\overline{B}(0,l_0).
\end{eqnarray*}
\end{proof}

We are now in a position to give
\begin{proof} {\bf\text The proof of Theorem $\ref{4-16}$:}
For $f\in \mathscr{B}_{b}(\mathbb{R}^{m+n})$ with $f\geq 0$   and  $x_0 \in \mathbb{R}^{m+n}$ with $|x_0|<l$
\begin{eqnarray}
\nonumber
&&\limsup_{x\rightarrow x_0}\mathbb{E}f(X_t^x)
\\ \nonumber &&\ \ \leq \limsup_{x\rightarrow x_0}\mathbb{E}\big\{f(X_t^x)I_{\{t\leq S_l(x)\}}\big\}+\limsup_{x\rightarrow x_0}
\mathbb{E}\big\{f(X_t^x)I_{\{t>S_l(x)\}}\big\}
\\
\nonumber
&&\ \ \leq \limsup_{x\rightarrow x_0}\mathbb{E}\big\{f(X^l_t(x))I_{\{ t\leq S_l(x)\}}\big\}
+\|f\|_{\infty}\limsup_{x\rightarrow x_0}\mathbb{P}(t>S_l(x))
\\
\nonumber
&&\ \ \leq \limsup_{x\rightarrow x_0}\mathbb{E}f(X^{l}_t(x))
+\|f\|_{\infty}\limsup_{x\rightarrow x_0}\mathbb{P}(t>S_l(x))
\\ \label{3-36}
&&\ \  = \mathbb{E}f(X^{l}_t(x_0))
+\|f\|_{\infty}\limsup_{x\rightarrow x_0}\mathbb{P}(t>S_l(x)),
\end{eqnarray}
where we use $(\ref{3-5})$ in the second inequality and Lemma $\ref{3-92}$ in the last equality.
It follows $(\ref{3-36})$ and $(\ref{3-5})$ that
\begin{eqnarray*}
\nonumber
&& \limsup_{x\rightarrow x_0}\mathbb{E}f(X_t^x)
\\ && \leq
\mathbb{E}\big\{f(X^{l}_t(x_0))I_{\{t\leq S_l(x_0)\}}\big\}+\mathbb{E}\big\{f(X^l_t(x_0))I_{\{t>S_l(x_0)\}}\big\}
+\|f\|_{\infty}\limsup_{x\rightarrow x_0}\mathbb{P}(t>S_l(x))
\\ \label{3-39}
&&\leq \mathbb{E}f(X_t^{x_0})+\|f\|_{\infty}\mathbb{P}(t>S_l(x_0))+\|f\|_{\infty}\limsup_{x\rightarrow x_0}\mathbb{P}(t>S_l(x)).
\end{eqnarray*}
Let $l\rightarrow \infty$ in the above inequality  and by Lemma $\ref{3-70}$ we obtain
\begin{eqnarray*}
\limsup_{x\rightarrow x_0}\mathbb{E}f(X_t^x)&\leq & \mathbb{E}f(X_t^{x_0})+\|f\|_{\infty}\lim_{l\rightarrow \infty}\limsup_{x\rightarrow x_0}\mathbb{P}(t>S_l(x))
\\&\leq & \mathbb{E}f(X_t^{x_0})+\|f\|_{\infty}\lim_{l\rightarrow \infty}\mathbb{E}\limsup_{x\rightarrow x_0}I_{\{t>S_l(x)\}}
\\&\leq&\mathbb{E}f(X_t^{x_0})+\|f\|_{\infty} \lim_{l\rightarrow \infty}\mathbb{E} I_{\{t\geq S_{l-1}(x_0)\}}
\\&\leq & \mathbb{E}f(X_t^{x_0}).
\end{eqnarray*}
For $g\in\mathscr{B}_b(\mathbb{R}^{m+n})$, repeating the above procedure with $\|g\|_\infty-g$ and $\|g\|_\infty+g$, one arrives at
\begin{eqnarray}
\label{3-43}
\limsup_{x\rightarrow x_0}\mathbb{E}\big\{\|g\|_\infty-g(X_t^x)\big\}&\leq & \|g\|_\infty-\mathbb{E}g(X_t^{x_0}),
\\ \label{3-42}\limsup_{x\rightarrow x_0}\mathbb{E}\big\{\|g\|_\infty+g(X_t^x)\big\}&\leq & \|g\|_\infty+\mathbb{E}g(X_t^{x_0}).
\end{eqnarray}
Therefore, for any  $g\in\mathscr{B}_b(\mathbb{R}^{m+n})$
\begin{eqnarray*}
\label{3-41}
\lim_{x\rightarrow x_0}\mathbb{E}g(X_t^x)&= & \mathbb{E}g(X_t^{x_0}).
\end{eqnarray*}
\end{proof}

\begin{Rem}
In   \cite{Delarue},   the authors considered   the following SDE
\begin{eqnarray*}
\left\{
  \begin{split}
X_t^1&=x_1+\int_0^t F_1(s,X_s^1,\cdots,X_s^n)ds+\int_0^t\sigma(s,X_s^1,\cdots,X_s^n)dW_s,
\\
X_t^2&=x_2+\int_0^t F_2(s,X_s^1,\cdots,X_s^n)ds,
\\
X_t^3&=x_3+\int_0^t F_3(s,X_s^2,\cdots,X_s^n)ds,
\\
&  \ \ \vdots
\\
X_t^{n}&=x_n+\int_0^tF_n(s,X_s^{n-1},X_s^n)dt.
\end{split}
\right.
\end{eqnarray*}
 the authors proved  that  $X_t$ has a density $p(t,x,y)$ and gave the upper and lower bounds of $p(t,x,y)$ if the spectrum of the matrix-valued function $ A= \sigma\cdot\sigma^{*}$ is
included in $[\Lambda^{-1},\Lambda]$ for some $\Lambda\geq 1$.
In our article,  we can't obtain such strong results since in our condition is $\det\big(\sigma(x)\cdot\sigma^{*}(x)\big)\neq 0,$  which is weaker than that in  \cite{Delarue}.
\end{Rem}

\section{Some Applications}
The strong Feller property is very useful  when we   prove  the uniqueness
of invariant measure.

If $ X_t \in \mathbb{R}^n, ~t \in  [0,+\infty),~n\in \mathbb{N} $ is
a  continuous  Markov process. The following  theorem is classical.
\begin{hypothesis}Let  $P_t$ be the  semigroup associated  with $X_t$, and
\label{4-13}
\begin{itemize}
\item the Markov process $X_t$ is irreducible, i.e,
$$
P_{t}(x,A)>0, \ {\text for\ all }\ t>0,\ x \in \mathbb{R}^n, {\text
\
  open\  set } \ A.
$$

\item   $P_t $ is strong Feller.
\end{itemize}
\end{hypothesis}

\begin{thm}
\label{4-4} (c.f. \cite{Stettner} \cite{Flandoli2}
\cite{Flandoli}) Let  the   Hypothesis $\ref{4-13}$  hold, then $P_t $
exists at most one invariant measure.
\end{thm}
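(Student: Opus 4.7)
The plan is to prove this uniqueness result along the lines of the classical Doob--Khasminskii theorem. I would argue by contradiction: suppose $\mu_{1}$ and $\mu_{2}$ are two distinct invariant probability measures for $P_{t}$. By the ergodic decomposition theorem applied to the Markov semigroup on $\mathbb{R}^{n}$, I may assume without loss of generality that each $\mu_{i}$ is ergodic (i.e.\ an extreme point of the convex set of invariant probability measures), and recall that two distinct ergodic invariant measures are necessarily mutually singular. The whole strategy then reduces to deriving a contradiction with this mutual singularity.

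The central intermediate claim is the following equivalence statement: under Hypothesis \ref{4-13}, for every fixed $t>0$ and every pair $x,y\in\mathbb{R}^{n}$, the transition probabilities $P_{t}(x,\cdot)$ and $P_{t}(y,\cdot)$ are mutually absolutely continuous. To establish this I would use a Khasminskii-type topological argument. Define, for a fixed $t>0$, the relation $x\sim y$ if and only if $P_{t}(x,\cdot)\sim P_{t}(y,\cdot)$, which is an equivalence relation on $\mathbb{R}^{n}$. Using the semigroup identity $P_{2t}(y,A)=\int P_{t}(z,A)\,P_{t}(y,dz)$ together with the strong Feller property of $P_{t}$ (continuity of $y\mapsto P_{t}(y,A)$ for every Borel $A$), I would show that each equivalence class is open: if $P_{t}(x_{0},A)=0$ while $P_{t}(y,A)>0$ for some $y$, one transfers the zero/positivity information through $P_{2t}$ and uses continuity to propagate it to a neighbourhood of $x_{0}$. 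Irreducibility then prevents the alternative ``everywhere zero'' scenario on any open set. Since $\mathbb{R}^{n}$ is connected and partitioned into open equivalence classes, there can be only one class, which gives the desired equivalence of all $P_{t}(x,\cdot)$.

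Once this equivalence is in hand, the conclusion is immediate. Invariance gives
\begin{equation*}
\mu_{i}(B)=\int_{\mathbb{R}^{n}}P_{t}(x,B)\,\mu_{i}(dx),\qquad i=1,2,
\end{equation*}
so if $\mu_{1}(B)=0$ then $P_{t}(x,B)=0$ for $\mu_{1}$-a.e.\ $x$, hence (by the universal equivalence of $P_{t}(x,\cdot)$) for every $x\in\mathbb{R}^{n}$, and therefore $\mu_{2}(B)=0$. Thus $\mu_{2}\ll\mu_{1}$, and symmetrically $\mu_{1}\ll\mu_{2}$, contradicting the mutual singularity of distinct ergodic invariant measures.

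The main obstacle is the Khasminskii propagation step, i.e.\ upgrading pointwise equivalence to uniform equivalence over all of $\mathbb{R}^{n}$. The delicate point is handling the ``zero side'' of the equivalence: strong Feller gives continuity of $y\mapsto P_{t}(y,A)$, which transfers \emph{positivity} cleanly, but transferring the complementary null-set structure requires playing strong Feller and Chapman--Kolmogorov off against each other in both directions, and irreducibility is essential to exclude the trivial case of a globally null kernel. Everything else (ergodic decomposition, mutual singularity of distinct ergodic measures, and the final integration argument) is routine, and the theorem itself is standard—so referring to \cite{Stettner,Flandoli2,Flandoli} as done in the excerpt is entirely appropriate.
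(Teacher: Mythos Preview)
The paper gives no proof of this theorem at all; it is simply quoted as a classical fact with references to \cite{Stettner}, \cite{Flandoli2}, \cite{Flandoli}. Your outline is exactly the standard Doob--Khasminskii argument underlying those references (and, e.g., \cite{Daprato}), so there is nothing to compare and your final remark that a citation suffices is on point.
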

\subsection{The Langevin Equation}
This example is extended from the one in  \cite{J.C. Mattingly}.
Let $W_t,t\ge 0$ be a standard d-dimensional
Brownian Motion and ~$F:\mathbb{R}^{d}\rightarrow R,~\sigma \in
\mathbb{R}^{d \times d}$~invertible. Consider the Langevin SDE for
$q,p \in \mathbb{R}^{d}$ the position and momenta of particle of
unit mass, namely
\begin{eqnarray}
\label{4-1}
\left\{
\begin{split}
&dq=pdt,\\
&dp=-\gamma pdt-\nabla F(q)dt +\sigma dW_t.
\end{split}\right.
\end{eqnarray}

\begin{hypothesis}
\label{4-5} The function $F \in C^3(\mathbb{R}^d,\mathbb{R})$~and
satisfy
\begin{itemize}
\item  $F(q) \geq 0 $ for all $q  \in \mathbb{R}^d;$
\item There exists an $\alpha >0$ and\ $ \beta  \in(0,1) $ such that
$$
\frac{1}{2} \langle \nabla F(q),q \rangle \geq \beta F(q)+\gamma^2 \frac{\beta (2-\beta)}{8(1-\beta)}||q||^2-\alpha.
$$
\end{itemize}
\end{hypothesis}

\begin{proposition}
Let the  Hypothesis $\ref{4-5}$ hold,
then the
semigroup $P_t$ associated with the Langevin SDE  is strong Feller and
has a unique  invariant measure.
\end{proposition}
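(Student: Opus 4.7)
The plan is to verify the hypotheses of Theorem $\ref{4-16}$ and of Theorem $\ref{4-4}$, and to produce an invariant measure via a standard Lyapunov estimate. First, I cast Eq.$(\ref{4-1})$ into the form of Eq.$(\ref{1-1})$ by setting $x=q$, $y=p$, $a_1(q,p)=p$, $a_2(q,p)=-\gamma p-\nabla F(q)$ and $b(q,p)\equiv\sigma$. Since $F\in C^3$, we have $a_1\in C^{\infty}$, $a_2\in C^2$ and $b\in C^{\infty}$, while $\det(\sigma\sigma^*)\neq 0$ because $\sigma$ is invertible. The span condition in Hypothesis $\ref{3-20}$ is trivial with $j_0=1$: $\nabla_{p_j}a_1=e_j$, so $\mathcal{A}_1=\{e_1,\dots,e_d\}$ spans $\mathbb{R}^d$ at every point. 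Hence (i), (ii) and (iii) of Hypothesis $\ref{3-20}$ are immediate.

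For condition (iv) of Hypothesis $\ref{3-20}$ and for the existence of an invariant measure, I would use a Lyapunov function of the form
\[
W(q,p)=F(q)+\tfrac{1}{2}|p|^2+\epsilon\langle q,p\rangle+\tfrac{\epsilon\gamma}{2}|q|^2+K,
\]
with $\epsilon>0$ sufficiently small and $K>0$ chosen so that $W\geq 1$ on $\mathbb{R}^{2d}$ and $W\asymp F(q)+|p|^2+|q|^2$ at infinity. A direct computation with the generator, with the coefficient $\epsilon\gamma/2$ in front of $|q|^2$ chosen precisely to cancel the cross term $p\cdot q$ produced by the drift, gives
\[
LW=(\epsilon-\gamma)|p|^2-\epsilon\langle\nabla F(q),q\rangle+\tfrac{1}{2}\operatorname{tr}(\sigma\sigma^*).
\]
Inserting the lower bound in Hypothesis $\ref{4-5}$ and choosing $\epsilon<\gamma$ sufficiently small, the $|q|^2$ weight $\gamma^2\beta(2-\beta)/(8(1-\beta))$ is exactly calibrated so that
\[
LW\leq -c_1 W+c_2,
\]
for constants $c_1,c_2>0$. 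In particular $LW\leq (c_1+c_2)W$, which by the Remark after Hypothesis $\ref{3-20}$ gives non-explosion, i.e., (iv); Theorem $\ref{4-16}$ then delivers strong Feller. The same estimate, via the Has'minskii/Krylov--Bogoliubov procedure, yields tightness of $\{\frac{1}{T}\int_0^T P_t(x_0,\cdot)dt\}_{T>0}$ and hence at least one invariant probability measure.

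Irreducibility will follow from the Stroock--Varadhan support theorem applied to the control system associated with Eq.$(\ref{4-1})$. Given initial point $(q_0,p_0)$, target $(q_1,p_1)$ and time $t>0$, I pick any $q\in C^2([0,t];\mathbb{R}^d)$ with $q(0)=q_0$, $\dot q(0)=p_0$, $q(t)=q_1$, $\dot q(t)=p_1$, set $p(s)=\dot q(s)$ and solve for the smooth control $u(s)=\sigma^{-1}\bigl(\dot p(s)+\gamma p(s)+\nabla F(q(s))\bigr)$. This shows that every point of $\mathbb{R}^{2d}$ lies in the support of $P_t((q_0,p_0),\cdot)$, so $P_t((q_0,p_0),A)>0$ for every non-empty open $A$. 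Strong Feller, irreducibility and existence of an invariant measure, together with Theorem $\ref{4-4}$, yield uniqueness.

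The main obstacle is the Lyapunov computation: the cross term $\epsilon\langle q,p\rangle$ in $W$ simultaneously produces contributions of order $|p|^2$, $|q|^2$ and $\nabla F\cdot q$ inside $LW$, and one has to track these so that the dissipation generated by $-\epsilon\langle\nabla F,q\rangle$ ultimately dominates once $\epsilon$ is fixed. The specific constant $\gamma^2\beta(2-\beta)/(8(1-\beta))$ in Hypothesis $\ref{4-5}$ is tuned precisely for this cancellation, so once the coefficients are matched the estimate goes through cleanly; the remaining pieces, namely smoothness, non-degeneracy, span and irreducibility, are automatic thanks to the invertibility of $\sigma$ and the linearity of $a_1$ in $p$.
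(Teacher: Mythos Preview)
Your proof is correct and follows the same outline as the paper: verify Hypothesis $\ref{3-20}$ with $j_0=1$ to get strong Feller via Theorem $\ref{4-16}$, establish irreducibility, and produce an invariant measure, then conclude with Theorem $\ref{4-4}$. The only difference is one of presentation: the paper simply cites \cite{J.C. Mattingly} (Lemma 3.4 for irreducibility and Corollary A.5 for existence of an invariant measure), whereas you reconstruct those two ingredients explicitly --- the Lyapunov function $W=F+\tfrac12|p|^2+\epsilon\langle q,p\rangle+\tfrac{\epsilon\gamma}{2}|q|^2+K$ with the resulting estimate $LW\le -c_1W+c_2$, and the controllability argument via the Stroock--Varadhan support theorem --- which are precisely the constructions appearing in \cite{J.C. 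Mattingly}; so there is no substantive divergence from the paper's proof.
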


\begin{proof}
First, the Hypothesis $\ref{3-20}$ holds   for $j_0=1$, so
$P_t$ is strong Feller by Theorem $\ref{4-16}$.  Second, by the
Lemma 3.4 in \cite{J.C. Mattingly}, we know that  $P_t$ is
irreducible. So  $P_t$  has  at most one invariant measure. Third,
by the Corollary A.5 in \cite{J.C. Mattingly}, the invariant measure
for $P_t$ exists.
\end{proof}

\subsection{Stochastic Hamiltonian Systems}

This  example  is extended from the one in \cite{D.Talay}.  Consider a stochastic differential system of the type
\begin{eqnarray}
\label{4-11}
\left\{
\begin{split}
X_t&=X_0+\int_0^t \partial_y H(X_s,Y_s)ds,\\
Y_t&=Y_0-\int_0^t\big[\partial_x H(X_s,Y_s)+
F(X_s,Y_s)\partial_yH(X_s,Y_s)\big]ds+W_t,
\end{split}\right.
\end{eqnarray}
where  $X_t,Y_t,W_t$ belong to $\mathbb{R}^d$.

In the following Hypothesis, we don't  need $F$ and $H \in
C^{\infty}$ as in \cite{D.Talay}.
\begin{hypothesis}
\label{4-8} There exists  strictly positive numbers $\nu,M,\delta$,  there exits  a function $R(x,y)$ on\ $ \mathbb{R}^{2d } $~with
second derivatives having polynomial growth at infinity, such that
\begin{itemize}
\item $F\in C^2,H \in C^4$;
\item  $ 0< \nu |\xi|^2 \leq \sum_{i,j=1}^d  \partial_{y_iy_j}H(x,y) \xi_i \xi_j,$ $\forall x,y,\xi$;
\item  $H(x,y)+R(x,y)+M \geq \delta (|x|^{\nu}+|y|^{\nu})$;
\item  $LH(x,y)+LR(x,y) \leq -\delta (H(x,y)+R(x,y))+M$;
\item   $|\partial_yH(x,y)+\partial_yR(x,y)|^2 \leq M(H(x,y)+R(x,y)+1)$.
\end{itemize}
\end{hypothesis}

\begin{proposition}
Let the   Hypothesis  $\ref{4-8}$ holds, then
the semigroup $P_t$ associated with the equation $(\ref{4-11})$
is strong Feller and  has a unique  invariant measure.
\end{proposition}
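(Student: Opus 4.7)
The plan parallels the Langevin argument in Section 5.1: first establish strong Feller via Theorem \ref{4-16}, then apply Theorem \ref{4-4} to conclude uniqueness of the invariant measure after separately verifying irreducibility and existence.

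First, I would recast (\ref{4-11}) in the form (\ref{1-1}) by taking $m=n=d$, $a_1(x,y)=\partial_y H(x,y)$, $a_2(x,y)=-\partial_x H(x,y)-F(x,y)\partial_y H(x,y)$, and $b\equiv I_d$, and verify Hypothesis \ref{3-20}. Conditions (i)--(ii) follow from $H\in C^4$, $F\in C^2$, and the identity diffusion. For (iii), take $j_0=1$ and note that
\begin{eqnarray*}
\mathcal{A}_1=\{\nabla_{y_j}\partial_y H(x,y):j=1,\dots,d\}
\end{eqnarray*}
spans $\mathbb{R}^d$ at every $(x,y)$ because the Hessian $\partial_{yy}H$ is uniformly positive definite. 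Global existence (iv) is obtained from the Lyapunov bound $L(H+R)\leq -\delta(H+R)+M$ via Theorem 5.9 of \cite{Bellet} applied to $W=H+R+M/\delta$. Theorem \ref{4-16} then yields strong Feller.

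Second, existence of an invariant probability measure follows from Krylov--Bogoliubov. Combining the coercivity $H+R+M\geq\delta(|x|^\nu+|y|^\nu)$ with the Lyapunov inequality and Dynkin's formula gives
\begin{eqnarray*}
\sup_{t\geq 0}\mathbb{E}_{(x_0,y_0)}\bigl(|X_t|^\nu+|Y_t|^\nu\bigr)<\infty,
\end{eqnarray*}
so the time-averaged laws $\frac{1}{T}\int_0^T P_t((x_0,y_0),\cdot)\,dt$ are tight, and any weak limit point is invariant since $P_t$ is Feller by continuous dependence of (\ref{4-11}) on initial data.

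Third, for irreducibility I would invoke the Stroock--Varadhan support theorem applied to the associated control system
\begin{eqnarray*}
\dot x=\partial_y H(x,y),\qquad \dot y=-\partial_x H(x,y)-F(x,y)\partial_y H(x,y)+u(t).
\end{eqnarray*}
The uniform positive definiteness of $\partial_{yy}H$ gives direct control of the $y$-component, while the $x$-component is reached through the same bracket structure encoded in $\mathcal{A}_1$ that produced the H\"ormander-type condition. This yields $P_t(z,A)>0$ for every open $A$ and $t>0$, and Theorem \ref{4-4} then gives uniqueness.

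The main obstacle I anticipate is the irreducibility step: the corresponding argument in \cite{D.Talay} is phrased under $C^\infty$ regularity, while here only $C^4$ on $H$ and $C^2$ on $F$ is available. The Stroock--Varadhan theorem is known to work under $C^1$ coefficients with locally Lipschitz derivatives, which is amply satisfied, so the accessibility argument should go through with the same bracket chain as in $\mathcal{A}_1$; nevertheless, checking that the reduced regularity does not break the density/approximation step in the support theorem is the most delicate point.
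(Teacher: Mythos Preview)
Your proposal is correct and follows the same three-step structure as the paper's proof: strong Feller via Theorem \ref{4-16} with $j_0=1$ (using the positive definiteness of $\partial_{yy}H$), irreducibility, and existence of an invariant measure, combined through Theorem \ref{4-4}. The paper simply cites \cite{D.Talay} (Lemma 2.2 for irreducibility, Lemma 2.1 and Corollary 2.1 for existence) rather than reproving these, whereas you sketch them via Stroock--Varadhan and Krylov--Bogoliubov; your concern about whether the reduced regularity $H\in C^4$, $F\in C^2$ suffices for the support-theorem step is legitimate and is precisely the point the paper glosses over by citing \cite{D.Talay}.
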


\begin{proof}
First, for $ 0< \nu |\xi|^2 \leq \sum_{i,j=1}^d
\partial_{y_iy_j}H(x,y) \xi_i \xi_j,$ $\forall x,y,\xi$, we have the
the Hypothesis $\ref{3-20}$ is satisfied for $j_0=1$. Thus $P_t$ is
strong feller by Theorem $\ref{4-16}$. Second, by the Lemma 2.2 in
\cite{D.Talay}, We know the $P_t$ is irreducible. So the invariant
for $P_t$ is at most one. Third, by the Lemma 2.1 and Corollary 2.1
in \cite{D.Talay},   the invariant measure for $P_t$ exists.
\end{proof}
\subsection{ High Order Stochastic Differential Equations }
Consider the following Stochastic Differential Equations  with order
$n$,
\begin{eqnarray} \label{4-17}
x^{(n)}_t=f(x^{(n-1)}_t,\cdots,x_t)+b(x^{(n-1)}_t,\cdots,x_t)\dot{B}_t,
\end{eqnarray}
 where $x^{(k)}_t=\frac{d^{k}x_t}{dt^{k}},~k=1,\cdots,n, ~x_t \in \mathbb{R}^m,~b \in \mathbb{R}^{m\times d},~B_t \in \mathbb{R}^d$.

Set $y_i(t)=x^{(i-1)}_t,~ 1 \leq i\leq n$, then
$y_t=(y_1(t),\cdots,y_n(t))$ satisfy the following stochastic
differential equation:
\begin{eqnarray}\label{4-15}
\left\{\begin{array}{l} dy_1(t)=y_2(t)dt,
\\ \vdots
\\dy_{n-1}(t)=y_{n}(t)dt,
\\dy_n(t)=f(y_n,y_{n-1},\cdots,y_1)dt+b(y_n,y_{n-1}\cdots,y_1)dB_t.
\end{array}
\right.
\end{eqnarray}

\begin{proposition} \label{4-18}
Let $x^x_t$ be the solution of equation $(\ref{4-17})$~with initial
value $x=(x_0,\cdots,x_0^{(n-1)})\in \mathbb{R}^{m\times n},$  $P_t$ be  the semigroup associated with $(\ref{4-17})$,
\begin{enumerate}
\item[$(1)$] If $f \in C_b^2(\mathbb{R}^{m\times n};\mathbb{R}^{m}),
b \in C_b^2(\mathbb{R}^{m\times n};\mathbb{R}^{m})$ and $\det(b(x)b^{*}(x)) \neq 0$, then the law of $x_t^x$ is
   absolutely continuous with respect to Lebesgue measure, and its
   density $p(t,x,y)$ is continuous with respect to y and  $\sup_y |p(t,x,y)|<\infty.$
\item[$(2)$]  If  $f \in C^2(\mathbb{R}^{m\times n};\mathbb{R}^{m}),
b \in C^2(\mathbb{R}^{m\times n};\mathbb{R}^{m})$  and   for any  $x
\in \mathbb{R}^{m\times n},$ $\det(b(x)b^{*}(x)) \neq 0 $  and   the
solution to  equation $(\ref{4-17})$ with initial value $x$ is
globally exists,  then  the semigroup  $P_t$ is strong Feller.
\end{enumerate}
\end{proposition}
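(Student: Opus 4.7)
The plan is to rewrite the $n$-th order SDE \eqref{4-17} as the first-order system \eqref{4-15} and then apply Proposition \ref{2-35} for part (1) and Theorem \ref{4-16} for part (2). I identify \eqref{4-15} with the template \eqref{1-1} by taking the ``slow'' block to be $(y_1,\dots,y_{n-1}) \in \mathbb{R}^{m(n-1)}$ and the ``noisy'' block to be $y_n \in \mathbb{R}^m$, so that $a_1(y_1,\dots,y_n) = (y_2,\dots,y_n)^{*}$, $a_2 = f$, and the diffusion coefficient is the given $b$. Since $a_1$ is linear, its Jacobian is constant and bounded and $a_1 \in C^\infty$, so the smoothness and boundedness requirements on $a_1$ in Hypotheses \ref{2-18} and \ref{3-20} are automatic for any choice of $j_0$. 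The assumptions on $f$ and $b$ in the two parts translate directly into the required conditions on $a_2$ and $b$, including the non-degeneracy $\det(b b^{*})\neq 0$.

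The only genuine content of the verification is the iterated bracket condition, for which I would take $j_0 = n-1$. Computing, the Jacobian of $a_1$ with respect to the noisy block $y_n$ equals $(0,\dots,0,I_m)^{*}$, so $\mathcal{A}_1$ is a basis of the subspace $\{0\}^{n-2}\times\mathbb{R}^m$ sitting in the last block of $\mathbb{R}^{m(n-1)}$. Because the elements of $\mathcal{A}_1$ are constant vectors, both $\nabla_{y_j}k$ and $\nabla_x k$ vanish for $k\in\mathcal{A}_1$, so $\mathcal{A}_l$ with $l\geq 2$ is generated entirely by the shift terms $-\nabla_x a_1 \cdot k$. The key algebraic observation is that $\nabla_x a_1$ is the block super-shift matrix with $I_m$ in every super-diagonal position: it sends a vector supported in block $i$ to the same vector in block $i-1$. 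Iterating this shift, one shows inductively that $\mathcal{A}_l$ is, up to sign, a basis of the $(n-l)$-th block, so $\bigcup_{l=1}^{n-1}\mathcal{A}_l$ spans the whole space $\mathbb{R}^{m(n-1)}$ at every point. This verifies condition (iii) of the respective hypotheses globally.

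With Hypothesis \ref{2-18} verified under the $C_b^2$ assumptions of part (1), Proposition \ref{2-35} immediately delivers the absolute continuity, the continuity of the density in the second argument, and the uniform bound $\sup_y p(t,x,y)<\infty$. For part (2), global existence is assumed directly, so Hypothesis \ref{3-20} is fully verified and Theorem \ref{4-16} yields the strong Feller property of $P_t$. There is no substantial obstacle: the whole argument is a translation into the framework of Sections 2--4, with the real content concentrated in the observation that exactly $n-1$ applications of the super-shift $\nabla_x a_1$ are needed to propagate the noise from the bottom block $y_n$ through the chain of integrators up to the top block $y_1$.
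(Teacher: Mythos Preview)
Your proposal is correct and follows exactly the paper's route: rewrite \eqref{4-17} as the first-order system \eqref{4-15}, identify it with the template \eqref{1-1}, verify the spanning condition (iii), and then invoke Proposition~\ref{2-35} for part~(1) and Theorem~\ref{4-16} for part~(2). Your verification is in fact more careful than the paper's one-line proof: the paper asserts that Hypothesis~\ref{3-20} holds with $j_0=1$, but as your block-shift computation shows, $\mathcal{A}_1$ only spans the last $m$-dimensional block of $\mathbb{R}^{m(n-1)}$, so for $n\geq 3$ one genuinely needs $j_0=n-1$ iterations of the super-shift $\nabla_x a_1$ to propagate the span through all blocks; since $a_1$ is linear this costs nothing in regularity, and your argument closes the gap.
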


\begin{proof}
The  Hypothesis $\ref{3-20}$  holds  for $j_0=1$, so
$(1)$ follows from  Proposition $\ref{2-35}$.
And $(2)$ follows from  Theorem $\ref{4-16}$,.
\end{proof}

Specially, if we consider the following stochastic differential
equation
\begin{eqnarray}
\label{4-14}
&&x^{(n)}_t+a_{n-1}(x_t)x^{(n-1)}_t+\cdots+a_0(x_t)x_t+c(x_t)+\frac{b(x_t)dB_t}{dt}=0,
\end{eqnarray}
where $x^{(k)}_t=\frac{d^{k}x_t}{dt^k}, ~x_t\in \mathbb{R}^m,~B_t \in
\mathbb{R}^d,  ~b(x_t) \in \mathbb{R}^{m\times d}, ~c\in
\mathbb{R}^m,~a_0,\cdots,a_{n-1}\in \mathbb{R}^{m\times m}$.

\begin{corollary}
Let $x_t^x$ be the solution of equation $(\ref{4-14})$~with initial
value $x=(x_0,\cdots,x_0^{(n-1)})\in \mathbb{R}^{m\times n}.$
\begin{enumerate}
\item[$(1)$] If  $a_0,\cdots,a_{n-1} \in C_b^2(\mathbb{R}^m;\mathbb{R}^{m\times m}),
    ~b \in C_b^2(\mathbb{R}^m;\mathbb{R}^{m\times d}),~c \in
   C_b^2(\mathbb{R}^m;\mathbb{R}^m)$, and  $\det(b(x_0)b^{*}(x_0)) \neq 0$, then the law of $x_t^x$ is
   absolutely continuously with respect to Lebesgue  measure, and  its
   density $p(t,x,y)$ is
   continuous with respect to y and  $\sup_y |p(t,x,y)|<\infty.$
\item[$(2)$]   If  $a_0,\cdots,a_{n-1} \in C^2(\mathbb{R}^m;\mathbb{R}^{m\times m}),~b \in C^2(\mathbb{R}^m;\mathbb{R}^{m\times d}),
    ~c \in  C^2(\mathbb{R}^m;\mathbb{R}^m)$, and for any $x=(x_0,\cdots,x_0^{(n-1)})\in \mathbb{R}^{m\times n}$,  $\det(b(x_0)b^{*}(x_0)) \neq 0$ and   $x_t^x$ is globally
    exists, then the semigroup $P_t$ is strong Feller.
\end{enumerate}
\end{corollary}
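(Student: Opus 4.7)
The plan is to reduce equation (4.14) to the first-order system (4.15) already treated in Proposition \ref{4-18}. Setting $y_i(t) := x^{(i-1)}_t$ for $i=1,\dots,n$, the process $(y_1(t),\dots,y_n(t))$ satisfies (4.15) with
\begin{equation*}
f(y_n,\dots,y_1) = -a_{n-1}(y_1)y_n - a_{n-2}(y_1)y_{n-1} - \cdots - a_0(y_1)y_1 - c(y_1),
\end{equation*}
and diffusion coefficient $-b(y_1)$ (the sign being harmless). The initial datum $x=(x_0,\dots,x_0^{(n-1)})$ of (4.14) becomes the initial state of (4.15), so it suffices to verify that the hypotheses of Proposition \ref{4-18} hold for this particular $f$ and $b$.

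First, I would check the non-degeneracy. Since the diffusion in the reduced system depends only on the first block $y_1$, the condition $\det(b(y_1)b^*(y_1))\neq 0$ in Proposition \ref{4-18} is exactly the condition $\det(b(x_0)b^*(x_0))\neq 0$ at the initial value in part (1), and its analogue for all $y_1\in\mathbb{R}^m$ in part (2). The regularity of $f$ and $b$ as functions on $\mathbb{R}^{m\times n}$ follows from the corresponding regularity of $a_0,\dots,a_{n-1},b,c$ on $\mathbb{R}^m$: each is a sum of products of the $a_i(y_1)$ or $c(y_1)$ with coordinate projections, so the regularity in the sense of Proposition \ref{4-18} is inherited from the hypotheses of the Corollary.

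The real content is the verification of the Hörmander-type span condition (iii) in Hypothesis \ref{2-18} (which is embedded in Hypothesis \ref{3-20} as well). Here one exploits the chain structure of (4.15) explicitly: with the paper's splitting, the position block is $x=(y_1,\dots,y_{n-1})$ of dimension $(n-1)m$, and $a_1=(y_2,\dots,y_n)^*$. Then $\nabla_{y_n}a_1$ contributes the block $(0,\dots,0,I_m)^*$, so $\mathcal{A}_1$ spans the last position block. The Jacobian $\nabla_x a_1$ is super-diagonal with $I_m$'s, hence $-\nabla_x a_1\cdot k+\nabla_x k\cdot a_1$ shifts each element of $\mathcal{A}_{l-1}$ down by one block. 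Iterating $n-1$ times shows that $\bigcup_{k=1}^{n-1}\mathcal{A}_k$ spans $\mathbb{R}^{(n-1)m}$ at every point, independent of $y$. With all three conditions in place, part (1) of the Corollary follows from Proposition \ref{4-18}(1) and part (2) from Proposition \ref{4-18}(2).

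The only real obstacle is the block-matrix calculation in the last paragraph, but it is essentially the same verification already implicit in Proposition \ref{4-18} for the more general equation (4.17); specializing to the affine-in-derivatives structure of (4.14) does not change the argument because the span condition is determined by $\nabla_y a_1$ and $\nabla_x a_1$, which are the same as in the general case. Everything else reduces to routine bookkeeping.
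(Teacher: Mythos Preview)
Your approach is correct and matches the paper's, which disposes of the Corollary in a single line by invoking Proposition~\ref{4-18}. Since equation~(\ref{4-14}) is a particular instance of the general high-order SDE~(\ref{4-17}), once it is rewritten as the first-order system~(\ref{4-15}) the proposition applies directly; your explicit recomputation of the span condition~(iii) is therefore redundant here, as that verification is already absorbed into the proof of Proposition~\ref{4-18} for arbitrary~$f$ and~$b$.
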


\begin{proof}
It can be obtained by Proposition $ \ref{4-18}$.
\end{proof}

\begin{appendices}

\section{Proof of Lemma  $\ref{23}$ and Lemma $\ref{25}$} \label{A-2}
The proof of Lemma  $\ref{23}$ and Lemma $\ref{25}$ are  very similar to the proof of  Norris Lemma (c.f. Lemma 2.3.1, \cite{Nualart}), so we only give the proof of Lemma $\ref{25}$ here.
\begin{proof} {\bf Proof of Lemma  $\ref{25}$:}
Define stopping time as
\begin{eqnarray*}
\zeta=\inf\left\{t\geq 0:\sup_{0\leq s \leq t}(|a(s)|+|u(s)|)>\epsilon ^{-r}\right\}\wedge
\sigma,
\end{eqnarray*}%
then
\begin{eqnarray*}
B=\left\{\int_{0}^{\sigma }\tilde{y}(t)^{2}dt<\epsilon ^{q},\int_{0}^{\sigma
}|u(t)|^{2}dt\geq \epsilon \right\} \subseteq A_{1}\cup A_{2}\cup A_{3},
\end{eqnarray*}%
 where
\begin{eqnarray*}
A_{1} &=&\left\{\int_{0}^{\sigma }\tilde{y}(t)^{2}dt<\epsilon ^{q},~\int_{0}^{\sigma
}|u(t)|^{2}dt\geq \epsilon ,~\zeta=\sigma ,~\sigma \geq \epsilon ^{ }\right\}, \\
A_{2} &=&\left\{\zeta<\sigma\right\}, \\
A_{3} &=&\left\{\sigma <\epsilon ^{ }\right\}.
\end{eqnarray*}%
Obviously,
\begin{eqnarray*}
\mathbb{P}\{A_{2}\} \leq \tilde{c}\epsilon ^{rp},  \
\mathbb{P}\{A_{3}\} \leq C(c_\sigma,\tilde{p})\epsilon ^{
\tilde{p}},
\end{eqnarray*}
so we only need to estimate $\mathbb{P}(A_1)$.

Introduce the following notation
\begin{eqnarray*}
&&N_{t}=\sum_{i=1}^d\int_{0}^{t}\tilde{y}(s)u_{i}(s)dW_{i}(s),
\\ && M_t=\sum_{i=1}^d \int_0^t u_i(s)dW_i(s),\\
&& B=\left\{\langle N\rangle_{\sigma }< \rho
_{1},\sup_{0\leq s\leq \sigma }|N_{s}|\geq \delta _{1}\right\},
\end{eqnarray*}%
where $\rho _{1}=\epsilon ^{q-2r},~\delta _{1}=\epsilon ^{\frac{q}{2}-r-\frac{%
v}{4}}.$

We will prove that there exists $\epsilon _{0}=\epsilon
_{0}(c_\sigma,q,r,v),$ such that
\begin{eqnarray*}
A_{1}\subseteq B,~\text{for all }\epsilon \leq \epsilon _{0}.
\end{eqnarray*}%
If this has been proved, then
\begin{eqnarray*}
\mathbb{P}\{A_{1}\}\leq \mathbb{P}\{B\}\leq 2\exp \{-\frac{\delta _{1}^{2}}{2\rho _{1}}\}\leq
\exp \{-\epsilon ^{-\frac{v}{4}}\},
\end{eqnarray*}%
thus this Lemma holds.

In the below, we will to prove: there exists $\epsilon _{0}=\epsilon
_{0}(c_\sigma,q,r,v ),$ such that
\begin{eqnarray*}
A_{1}\subseteq B,~\text{for all }\epsilon \leq \epsilon _{0}.
\end{eqnarray*}
Set $\epsilon _{0}=\epsilon _{0}(c_\sigma,q,r,v),$ such that for $%
\epsilon \leq \epsilon _{0}(c_\sigma,q,r,v)$, the following
inequalities hold.
\begin{align*}
& \epsilon ^{q}+2c_\sigma(\sqrt{c_\sigma}\epsilon
^{\frac{q}{2}-r}+\delta
_{1})\leq \epsilon ^{\frac{q}{2}-r-\frac{v}{4}}(1+2c_\sigma), \\
& \epsilon ^{\frac{q}{4}-\frac{r}{2}-\frac{v}{8}}(1+2c_\sigma)+\epsilon ^{%
\frac{5q}{4}-\frac{5r}{2}-\frac{v}{8}}<\epsilon .
\end{align*}
We  only need to prove for  any $\epsilon \leq \epsilon
_{0}(c_\sigma,q,r,v)$, $\omega\in B^{c}$ implies $ \omega \in
A_{1}^{c}.$

Let $\omega \in B^{c},~\int_{0}^{\sigma }\tilde{y}(t)^{2}dt<\epsilon ^{q},~\sigma(\omega)=\zeta(\omega) \geq \epsilon$, then
similar to
the estimations of $\sup\limits_{0\leq s\leq T}\Big|\int_0^tY_sdY_s\Big|,~\int_0^T\langle M\rangle_tdt $  and $\langle M\rangle_T $ in  the proof of Lemma 2.3.2 in \cite{Nualart}, we can  obtain
\begin{align}
\nonumber &\sup_{t\leq \sigma }\Big|\int_{0}^{t}\tilde{y}_{s}d\tilde{y}_{s}\Big|\leq \sqrt{c_\sigma}\epsilon ^{\frac{q}{2}-r}+\delta _{1},
\\ \nonumber  &\int_{0}^{\sigma}\langle M\rangle _{t}dt\leq \epsilon ^{\frac{q}{2}-r-\frac{v}{4}}(1+2c_\sigma),
\\ \label{A-3} &\langle M\rangle _{\sigma } \leq  \gamma^{-1}\epsilon
^{\frac{q}{2}-r-\frac{v}{4}}(1+2c_\sigma)+\gamma\epsilon ^{-2r},\ \  \forall \gamma\in (0,\sigma) .
\end{align}
Choosing  $\gamma=\epsilon ^{\frac{1}{2}\left( \frac{q}{2}-r-\frac{v}{4}\right) }<\epsilon
^{ }\leq \sigma$ in $(\ref{A-3}).$
Since $2q>8+20r+v$, we have $\langle M \rangle _{\sigma }<\epsilon $,  i.e.  $%
\omega \in A_{1}^{c}$.
\end{proof}

\end{appendices}

\section*{Acknowledgements}
The authors thanks for Professor  Fuzhou Gong,  Zhiming Ma, Pei Xu,
Tusheng  Zhang and Xicheng Zhang  and  Dr. Pei Yan Li for their
valuable discussions. Especially, we also thanks for Dr. Jian Zhou
for his much valuable discussions on his doctor period.


\begin{thebibliography}{99}


\bibitem{Cattiaux}
\textsc{Cattiaux, P.,   Leon, J. R.} and  \textsc{Prieur, C.} (2012). Estimation for Stochastic Damping Hamiltonian Systems under Partial Observation. I. Invariant density[J].

\bibitem{Ciniti}
\textsc{Cinti, C.,\   Menozzi, S \&  Polidoro, S.} (2012). Two-sided bounds for degenerate processes with densities supported in subsets of $\mathbb{R}^ N$. arXiv preprint arXiv:1203.4918.

\bibitem{Daprato}
\textsc{Daprato, G.} and \textsc{Zabczyk, J.} (1996). \textit{Ergodicity for  infinite  dimensional  systems}, Cambridge  University Press.

\bibitem{Delarue}
\textsc{Delarue, F.} and \textsc{Menozzi, S.} (2010). Density estimates for a random noise propagating through a chain of differential equations.   \textit{Journal of Functional Analysis.}  \textbf{259(6)} 1577-1630.

\bibitem{Flandoli2}
\textsc{Flandoli,  F.}  (1994).
Dissipativity and invariant measures for stochastic Navier-Stokes equations.
\textit{Non-linear Differential Equations  Appl}. \textbf{1} 403-423.


\bibitem{Flandoli}
\textsc{Flandoli, F.} and \textsc{Maslowski, B.} (1995).
Ergodicity of the 2-D Navier-Stokes Equation
Under Random Perturbations.
\textit{Commun.Math.Phys.} \textbf{172} 119-141.





\bibitem{Hairer3}
\textsc{Hairer, M.} and \textsc{Mattingly, J.C.} (2006).
Ergodicity of the 2D Navier-Stokes equations with degenerate stochastic
forcing.
\textit{Annals of Mathematics.}  \textbf{164}  993-1032.

\bibitem{Hairer}
\textsc{Hairer, M.} and \textsc{Mattingly, J. C.} (2011).
A theory of Hypoellipticity and Unique Ergodicity for
 Semilinear Stochastic PDEs.
\textit{Electronic Journal of Probability.} \textbf{16} 658-738.


\bibitem{Ichihara}
\textsc{Ichihara, Kanji.} and  \textsc{Kunita, Hiroshi.} (1974).
A classification of the second order degenerate elliptic operators and its probabilistic characterization
\textit{ Probability Theory and Related Fields.}  \textbf{30(3)} 235-254.



\bibitem{Kliemann}
\textsc{Kliemann, K.} (1987). Recurrence and  invariant measure for degenerate diffusions.
\textit{The annals of probability}  \text{15(2)} 690-707.

\bibitem{Kusuoka}
\textsc{Kusuoka, S.}  and  \textsc{Stroock, D.} (1987). Applications of the Malliavin calculus, III, \textit{J. Fac. Sci. Univ. Tokyo Sect. IA Math.}  \textbf{34}
 391¨C442.


\bibitem{Bellet}
\textsc{L, Rey-Bellet.} Ergodic properties of Markov processes,   Open Quantum Systems II 1881 (2006): 1-39.


\bibitem{J.C. Mattingly}
\textsc{Mattingly, J. C.}, \textsc{Stuartb, A. M.}  and  \textsc{Higham, D. J.} (2002).
Ergodicity for SDEs and approximations: locally
Lipschitz vector fields and degenerate noise. \textit{ Stochastic Processes
and Their Applications. }  \textbf{101} 185-232.







\bibitem{Nualart}
\textsc{Nualart, D.} (2006). \textit{The Malliavin Calculus and
Related Topics,} Springer.




\bibitem{Protter}
\textsc{Protter, P.E.} (2005).
 \textit{Stochastic Integration and Differential Equations,}
2nd ed. Springer.

\bibitem{REVUZ}
\textsc{Revuz, D.} and \textsc{Yor, M.}  (2005).
 \textit{Continuous Martingales and  Brownian Motion,}
3nd ed. Springer.



\bibitem{Shigekawa}
\textsc{Shigekawa, I.}   (2004).
 \textit{Stochastic Analysis, Translations of Mathematical
Monographs.}
 Vol, 224.

\bibitem{Sheu}
\textsc{Sheu, S.J.}  (1991).   Some estimates of the transition density of a nondegenerate
diffusion Markov process[J]. \textit{The Annals of Probability.}  \textbf{19(2)} 538-561.


\bibitem{Stettner}
\textsc{Stettner, L.} (1994).
Remarks on Ergodic Conditions for Markov Processes on Polish Spaces. \textit{Bulletin Polish Acad. Sci. Math.}
\textbf{ 42}  103-114.



\bibitem{D.Talay}
\textsc{Talay, D.} (2002).
Stochastic Hamiltonian Systems: Exponential Convergence to the Invariant Measure, and
Discretization by the Implicit Euler Scheme. \textit{Markov Processes and
Related Fields}. \textbf{8} 163-198.





\end{thebibliography}
\end{document}